\documentclass[10pt]{article}

\usepackage{graphics}
\usepackage{graphicx}

\usepackage[a4paper, left=35mm,right=35mm,top=34mm,bottom=34mm]{geometry}
\usepackage[utf8]{inputenc}
\usepackage[T1]{fontenc}
\usepackage[english]{babel}

\usepackage{enumerate}
\usepackage{graphicx}
\usepackage{hyperref}
\hypersetup{
    colorlinks=true,
    linkcolor=blue,
    filecolor=magenta,      
    urlcolor=cyan,
}
\usepackage{lipsum}
\usepackage{amsfonts}
\usepackage{graphicx}
\usepackage{epstopdf}
\usepackage{algorithmic}

 \usepackage{dsfont}
 \usepackage{psfrag}
 \usepackage{color}
 \usepackage{tikz}
 \usepackage{subfigure}
\usepackage{mathtools,amsthm,amssymb,amsfonts}
\usepackage{algorithm}
\usepackage{algorithmic}
\makeatother
\theoremstyle{plain}

\usepackage{type1cm} 
%
\usepackage{makeidx} 
\usepackage{graphicx} 
\usepackage{multicol} 
\usepackage[bottom]{footmisc}

\usepackage{newtxtext} %
\usepackage{newtxmath} 

\usepackage{xcolor}


\usepackage{amsmath}
\usepackage{amsfonts}
\usepackage{enumerate}
\usepackage{parskip}
\usepackage{color}
\usepackage{hyperref}
\usepackage{graphicx}
\usepackage{verbatim}
\usepackage{psfrag}
\usepackage{dsfont}

\usepackage{cite}
\usepackage{marginnote}
\makeindex 

 
\newtheorem{assumption}{Assumption}
\newtheorem{proposition}{\textbf{Proposition}}
\newtheorem{ex}{\textbf{Example}}

\newtheorem{remark}{\textbf{Remark}}
\newtheorem{theorem}{\textbf{Theorem}}
\newtheorem{lemma}{\textbf{Lemma}}
\newtheorem{definition}{\textbf{Definition}}

\def\eq{equation}

\newcommand{\dom}{\Omega}

\def\ep{\varepsilon}
\def\eps{\varepsilon}

\def\s{\sigma}

\def\S*{\Sigma_*}

\def\<{\langle}
\def\>{\rangle}

\def\Dim{d}

\def\N{\mathbb{N}}

\def\R{\mathbb{R}}
\def\bbS{\mathbb{S}}
\def\rn{\mathbb{R}^n}

\def\H{\mathcal{H}}

\def\H{\mathcal{H}}
\def\S{\mathcal{S}}

\def\L{\mathcal{L}}
\def\tC{\widetilde{C}}
\def\Fj{F^{(j)}}
\def\Ej{E^{(j)}}
\def\Gj{G^{(j)}}
\def\Ij{I^{(j)}}

\def\Lj{L^{(j)}}
\def\Nj{N^{(j)}}
\def\Oj{O^{(j)}}

\def\rj{r^{(j)}}
\def\Sj{S^{(j)}}
\def\Uj{U^{(j)}}
\def\Vj{V^{(j)}}
\def\Wj{\Sigma^{(j)}}
\newcommand{\W}{\Sigma}
\def\bZj{\overline{Z}^{(j)}}
\def\Zj{Z^{(j)}}
\def\zj{z^{(j)}}
\def\bzj{\overline{z}^{(j)}}
\def\Zj{Z^{(j)}}
\def\bz{\overline{z}}
\def\bZ{\overline{Z}}
\def\ej{\eta^{(j)}}
\def\mj{\mu^{(j)}}
\def\nj{\nu^{(j)}}
\def\tnj{\tilde{\nu}^{(j)}}
\def\1{\mathbf{1}}
\newcommand{\const}{\operatorname{const}}

\newcommand{\conv}{\operatorname{conv}}
\newcommand{\diam}{\operatorname{diam}}

\newcommand{\esup}{\operatorname{ess\,sup}}

\DeclareMathOperator*{\elim}{ess\,lim}
\newcommand{\nor}{\operatorname{nor}}
\newcommand{\Int}{\operatorname{int}}
\newcommand{\rea}{\operatorname{reach}}
\newcommand{\reg}{\operatorname{Reg}}

\newcommand{\Frac}{{\rm frac}}
\newcommand{\var}{{\rm var}}
\newcommand{\bd}{\partial}
\newcommand{\sC}{\mathcal{C}}
\newcommand{\sM}{\mathcal{M}}
\newcommand{\asC}{\widetilde{\sC}}
\newcommand{\asM}{\widetilde{\sM}}
\newcommand{\sN}{\mathcal{N}}

\newcommand{\Unp}{\operatorname{Unp}}
\newcommand{\cl}[1]{\overline{#1}}
\newcommand{\sT}{\mathcal{T}}

\def\del {\partial}
\def\eps{\varepsilon}

\def\R{\mathbb{R}}

\def\N{\mathbb{N}}
\def\sH{\mathcal{H}}

 \def\dx{{\rm d}x}
 \def\dy{{\rm d}y}

\def\dz{{\rm d}z}
\def\dr{{\rm d}r}

\def\dv{{\rm d}v}
\def\ds{{\rm d}s}

\def\dist{\operatorname{dist}}
\def\erf{\operatorname{erf}}
\def\dtau{{\rm d}\tau}




\begin{document}

\title{Fractal curvatures and short-time asymptotics of heat content}
\author{Anna Rozanova-Pierrat\thanks{CentraleSup\'elec, Universit\'e Paris-Saclay, France {anna.rozanova-pierrat@centralesupelec.fr}}, Alexander Teplyaev\thanks{University of Connecticut, USA {teplyaev@uconn.edu}}, Steffen Winter\thanks{Institute of Stochastics, Karlsruhe Institute of Technology, Englerstr. 2, D-76131 Karlsruhe, Germany {steffen.winter@kit.edu}}, and Martina Z\"ahle\thanks{Institute of Mathematics, University of Jena, Ernst-Abbe-Platz 2, D-07743 Jena, Germany {martina.zaehle@uni-jena.de}}}
%
%
\maketitle

\begin{abstract}
	The aim of our paper is twofold. First, we present new mathematical developments on the analysis of de Gennes' hypothesis on the short-time asymptotics of the heat content for bounded domains with smooth boundary and with fractal boundary. Second, we discuss new findings and concepts related to fractal curvatures for domains with fractal boundary. We conjecture that fractal curvatures and their scaling exponents will emerge in the short-time heat content asymptotics of domains with fractal boundary and the results discussed here are small initial contributions towards a resolution. 
\end{abstract}

\section{Introduction} \label{sec:1}

In this paper, we discuss some numerical and theoretical results towards the conjecture that in domains with fractal boundary under appropriate assumptions, the short-term heat content is described by (fractal) curvature measures (see Section~\ref{sec:curv-meas}). This conjecture is connected with many theoretical and numerical results, some of which we describe in detail, while others are available in the literature. The conjecture aligns with the idea proposed by Claude Bardos that the short-term heat content asymptotics is geometric in nature, with curvature measures being the most relevant geometric quantities.

We discuss the heat content in Subsection~\ref{sec:2}, including several classical results concerning its short-time asymptotic behavior. It is important to note that the leading term in these asymptotics is given by the perimeter in two dimensions and by the surface area in three dimensions. Furthermore, curvature appears explicitly in the higher-order terms, as demonstrated in Theorem~\ref{ThFinalr} in Subsection~\ref{SubsReg} and in \cite[Theorem~7.1]{BARDOS-2016}.

Our research is related to the results of van den Berg and coauthors on the heat content on Riemannian manifolds and polygonal domains \cite{vdBG15,vdBG16, VAN_DEN_BERG-1994, VANDENBERG-1994, VAN_DEN_BERG-1990, Gittins23}, as well as on fractal domains \cite{MR1831409,MR1785451,MR1620833,MR1335459,MR1305782}. Studying the short-time heat content asymptotics in fractal domains with self-similar boundaries may be particularly connected with complex dimensions and related work \cite{LapidusPearse2006,Lapidus-P-2008,Lapidus-P-2010,Lapidus-P-W-2011,LapidusPearse,FLECKINGER-1995,LEVITIN-1996}. Additionally, there is substantial literature on the analysis on fractals connected to our work, particularly by Strichartz and his collaborators \cite{StrBook,StrDiffEq,StrHarmonic,StrVicsek,StrRevisited,StrSig,PARgaps,StrExact,ADT,Grabner2008,Grabner2012,GrabnerWoess1997,Grabner1997,ST,HSTZ,KT2004,hambly2011asymptotics,strichartz2012spectral,alonso2023oscillations,kajino2014log,hinz2018fractal,hinz2023boundary,alonso2021besov}.

The paper is organized as follows. In Sections~\ref{SecMS} and~\ref{SubsecBardos}, we describe for a two-media problem approximations of the heat content by the volume of the interior Minkowski sausage. We provide asymptotics for smooth domains and corresponding results for fractal domains. The general idea of de Gennes about the proportionality of the heat content for small times to the interior Minkowski sausage of the radius equal to the diffusion length is proved for a large class of two-sided extension domains (see Section~\ref{SubsecBardos}) including domains with fractal boundaries (with a $d$-upper regular measure). 
Curvature measures and fractal curvatures for compact sets are reviewed in Section~\ref{sec:curv-meas}. Then the notion of fractal curvature is adapted to domains with fractal boundary in Section~\ref{fractal bound}, and in particular to domains with piecewise self-similar boundaries.  Then fractal curvatures of such domains are studied in detail, in Section~\ref{SecAssociatedfractalcurvatureslocalapproach} via a local approach  and in Section~\ref{sec:rel-fc} using self-similar tilings.


\subsection*{Acknowledgments} The authors are deeply grateful to Claude Bardos for insightful suggestions about the heat content asymptotics. AR-P thanks Denis Grebenkov for advise and help in the presentation of numerical results, performed several years ago in collaboration with Bernard Sapoval. AR et AT were supported by CNRS INSMI IEA project "Functional and applied analysis with fractal or non-Lipschitz boundaries".
AT was supported in part by NSF Grant DMS-2349433 and the Simons Foundation.
SW and MZ acknowledge support by the German Research Foundation (DFG) via grant no.\ 433621248. 

\subsection{Heat content and de Gennes' hypothesis} \label{sec:2}

The speed of heat exchange between a hot and cold medium is significantly influenced by the shape of the interface, especially at short time scales~\cite{ROZANOVA-PIERRAT-2012,BARDOS-2016,DE_GENNES-1982}. For example, the shape of a radiator greatly affects the rate of diffusive heat transfer, a phenomenon also observed in numerical simulations. Consider a numerical computation (performed using COMSOL Multiphysics for a model described by the linear heat equation) on a cavity initially composed of a hot and a cold medium (see the bottom line of Fig.~\ref{FigIntHeatDom1}), separated by interfaces of different lengths. It can be observed that the speed of heat propagation increases with the interface length for any fixed (sufficiently small) time.
This can be compared with the top line of Fig.~\ref{FigIntHeatDom1}, which shows the propagation of heat from a hot boundary into a cold medium. In the two-medium configuration, two distinct processes are observed: heat propagation (from hot to cold) and cold propagation (from cold to hot). This phenomenon is absent in the upper figures, where the hot boundary maintains a constant temperature equal to $1$ at all times. From both theoretical and practical perspectives, the case involving two media is more informative and of greater interest.

\begin{figure}[!ht]
	\begin{center}
	\includegraphics[width=\linewidth]{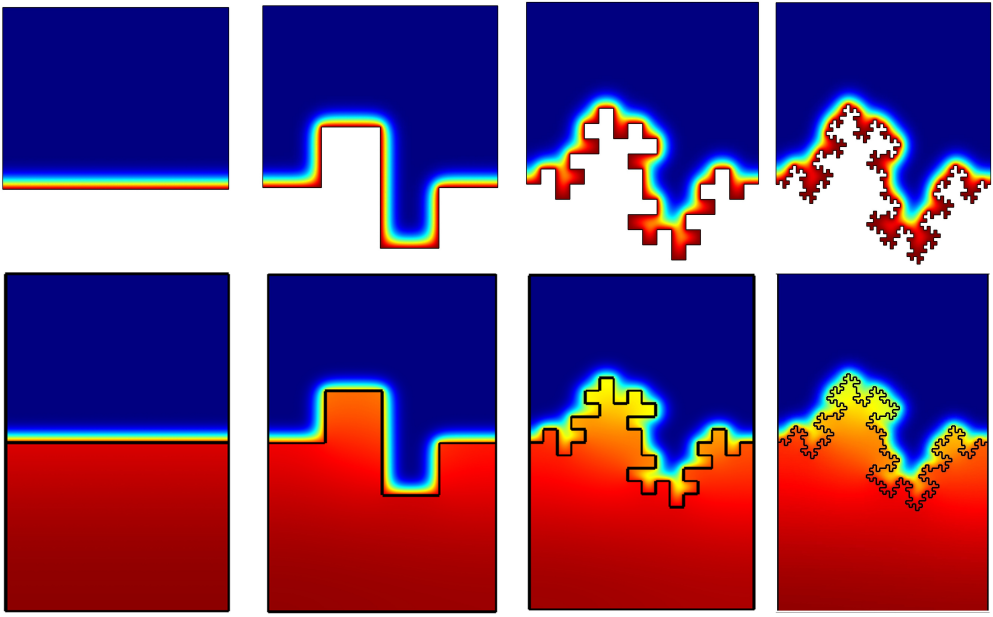}
	\end{center}
	\caption{\label{FigIntHeatDom1} Influence of the geometry on the heat propagation: colors encode the temperature distributions for media with different boundaries observed at some fixed point in time ($t=0.1$); the color scale ranging from red (hot) to blue (cold). Top row: Dirichlet condition, equal to $1$, imposed on the boundary at the bottom. Bottom row: propagation between a hot and a cold medium in a thermo-isolated cavity.}
\end{figure}

Once again, the beneficial interest is to find boundary designs that allow for heat transfer as fast as possible. Hence, the aim is to study the behavior of heat diffusion for a short time. Let us denote by $N(t)$ the heat content at time $t$, which is defined as the integral over the domain of heat propagation of the solution (at time $t$) of the heat equation in this domain with a thermo-isolated exterior boundary. 
{For the case of one medium with a hot part of the boundary, as on the top of Fig.~\ref{FigIntHeatDom1}, $N(t)=\int_\Omega u(x,t)\dx$, while for the bottom configuration of Fig.~\ref{FigIntHeatDom1} for two media, one initially hot, $\Omega_+$, and the other initially cold, $\Omega_-$, $N(t)=\int_{\Omega_-}u(x,t)\dx$ with $u$ the solution of the corresponding heat propagation problem.} In Fig.~\ref{FigIntHeat2}, which shows plots of the relative heat content $N(t)/N(\infty)$ for the different domains in Fig.~\ref{FigIntHeatDom1}, we can observe that for long times, i.e.\ as $t\to +\infty$, there is no influence of the geometry on the heat content, since it converges to the same equilibrium state (a constant temperature of the two media) for all boundary designs. 
However, the geometric influence is significant in the regime of short times, i.e., \ as $t\to +0+$. %

In the following, we use the term 'prefractal boundary' to refer to a Lipschitz boundary that approximates a fractal boundary. Typically it arises from a very simple boundary by repeated replacements of parts of the boundary by scaled copies of the whole boundary, leading to finer and finer structures. The number of repetitions is known as the order of the prefractal generation and the fractal boundary emerges in the limit as the order approaches $+\infty$.

For the prefractal example 
{with two media} in Fig.~\ref{FigIntHeatDom1}, the following length scales are relevant for the heat transfer (see Fig.~\ref{FigIntHeat2}): 
\begin{itemize}
 \item $\ell=1/64$, the smallest cut-off 
 {(geometric length)} of the third prefractal generation (length of the smallest prefractal features);
\item $L=1$, the largest cut-off (the base of the prefractal or the width of the whole cavity); 
\item $\ell_D=\sqrt{D_+ t}$, the diffusion length at time $t$
{, where $D_+$ is the diffusion coefficient of the initially hot medium (see also the caption of Fig.~\ref{FigIntHeat2})}.
\end{itemize}

In Fig.~\ref{FigIntHeat2}, one can see the existence of three time regions exhibiting different speeds of heat propagation (following different asymptotics indicated by the blue and red dotted line). These different speeds are asymptotically characterized by different powers of $t$. 
Exactly this dependence was pointed out by de Gennes~\cite{DE_GENNES-1982}, 
{which we refer to as the "de Gennes hypothesis"}, in the following way:
\begin{itemize}
 \item for \emph{short} times, i.e.\ for  diffusion lengths $\ell_D\in]0,o(\ell)[$, the prefractal shape behaves like a Lipschitz curve with a finite length
$$\displaystyle N(t)\propto \sH^{1}(\del \Omega)\sqrt{D_+ t},$$ %
with $\propto$ meaning \emph{proportional} and $\sH^{1}$ being the Hausdorff measure on $\del \Omega$, 
{which is the interface between initially hot and cold media,}
\item for \emph{intermediate} times, i.e. for  $\displaystyle \ell_D\in[O(\ell),o(L)[$, the third prefractal generation behaves like the fractal it approximates:
$$\displaystyle N(t)\propto\left(D_+ t\right)^{\frac{2-d}{2}},$$ with $d$ the Minkowski dimension of the fractal, 
\item for \emph{long} times, i.e.\ for $\ell_D\in[O(L),+\infty[$, one reaches the saturation regime, provided the domain is bounded: the heat content $N(t)$ converges (exponentially fast) to a constant for $t\to +\infty$. For unbounded domains one observes the same behavior as for a flat boundary: $\displaystyle N(t)\propto \sqrt{t}$, as $t\to +\infty$.%
\end{itemize}
The blue and red dotted lines in Fig.~\ref{FigIntHeat2} follow the predicted powers of $t$ in first two regimes. The same type of result, but this time with exact asymptotes, is shown in Fig.~\ref{FigF3} at the end of Section~\ref{SubsecBardos}.
\begin{figure}[!ht]
 \begin{center}
\psfrag{t}{{\small $\sqrt{D_+ t}/b$}}
\psfrag{N}{{\small $\qquad$ $\qquad$ mean value $N(t)/N(\infty)$}}
 \includegraphics[width=\linewidth]{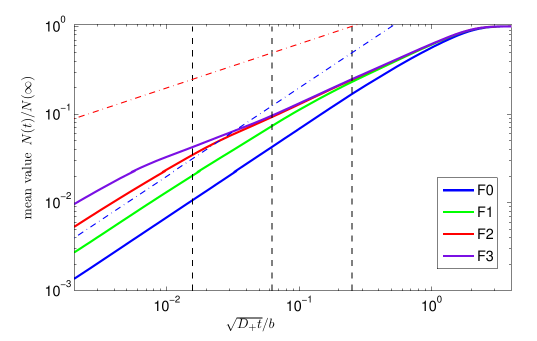}
\end{center}
\caption{\label{FigIntHeat2} Heat propagation in time for the four pre-fractal cavities shown at the bottom line of Fig.~\ref{FigIntHeatDom1}. Each cavity is split symmetrically into two media of equal volume (equal to half the volume of $[0,l]\times [0,b]$ with $l=1$, $b=3$). At $t=0$, one medium is hot, and the other one is cold (with diffusion coefficients given by $D_-=1$ and $D_+=1/100$, respectively). 
 Asymptotes: $2^i\sqrt{D_+t}/b$ where $i=0,1,2,3$ is the pre-fractal generation.}
 \end{figure}

Let $\Omega$ be a bounded domain in $\R^n$. In the case when there is no resistivity of the boundary to heat propagation, de Gennes~\cite{DE_GENNES-1982}
argued that, as $t\to 0+$, the heat content $N(t)$ of $\Omega$ is proportional to the volume
$\mu(\del \Omega,\sqrt{D_+ t})$ of the interior Minkowski sausage of
$\Omega$ of width equal to the diffusion length $\sqrt{D_+
t}$, that is, 
\begin{equation}\label{EqDefmu}
\mu(\del \Omega,r) := \lambda_n\left(\{ x\in \Omega|\operatorname{dist}(x,\del \Omega)<r\}\right),
\end{equation}
where $\lambda_n$ denotes the Lebesgue measure in $\R^n$. 
{Recall that the (relative) $d$-dimensional Minkowski content of $\del \Omega$ relative to $\Omega$ is defined by
$\mathcal{M}^d(\del \Omega,\Omega):=\lim_{r\to 0+} r^{d-n}\mu(\del \Omega,r)$, see also \eqref{EqSteffen} below.
Actually, the previously explained different scaling regimes correspond to two different boundary types:
the behavior for short times corresponds to the one of a Lipschitz curve (in fact, the prefractal approximation is a Lipschitz curve). However, at intermediate times the fractal nature of the boundary curve dominates and slows down the heat propagation.
%
In particular, the de Gennes conjecture would imply that
\begin{itemize}
\item
for a regular boundary $\del \Omega$, $N(t)$ is proportional to
$\sH^{n-1}(\del \Omega) \sqrt{D_+t}$ as $t\to 0+$,
where $\sH^{n-1}$ denotes the Hausdorff measure and thus $\sH^{n-1}(\del\Omega)$ is the surface area or perimeter of $\del\Omega$ 
{(see also Eq.~\eqref{EqMainAss} with $d=n-1$ and $r=\sqrt{D_+t}$)};
\item
for a fractal boundary $\del \Omega$ of Minkowski dimension $d$ (and with existing relative Minkowski content $\mathcal{M}^d(\del \Omega,\Omega)$),
$N(t)$ is proportional to $\mathcal{M}^d(\del \Omega,\Omega) (D_+t)^{\frac{n-d}{2}}$.
\end{itemize}

The de Gennes scaling argument was further investigated in~\cite{ROZANOVA-PIERRAT-2012} both experimentally and numerically. The main question of the approximation of the heat content for short times is directly related to the approximation of the volume of an interior Minkowski sausage of the interface between two media.

The interior Minkowski sausage can also be compared with isolines of the heat solution taken at a fixed moment $t_0$ as done numerically in Fig.~\ref{FigMinskS} for the case of a cold medium with a hot boundary. The idea also comes from~\cite{ROZANOVA-PIERRAT-2012}, where it was shown that for a good approximation of the interior Minkowski sausage by temperature isolines, it is important to consider rather small temperature values (much less than $1$).
\begin{figure}[!ht]
	 \begin{center}
 \includegraphics[width=0.5\linewidth]{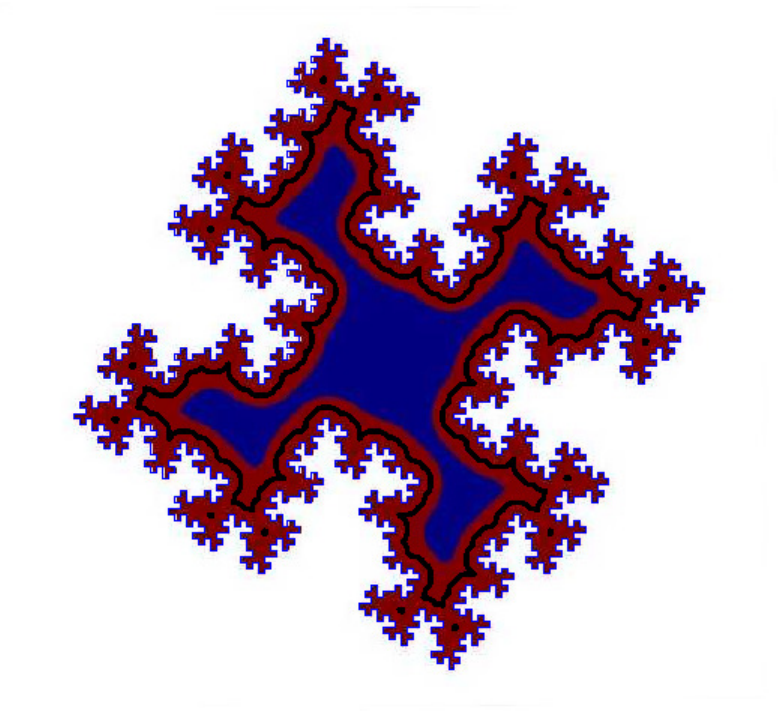}
 \vspace{2cm}
 \includegraphics[width=0.4\linewidth]{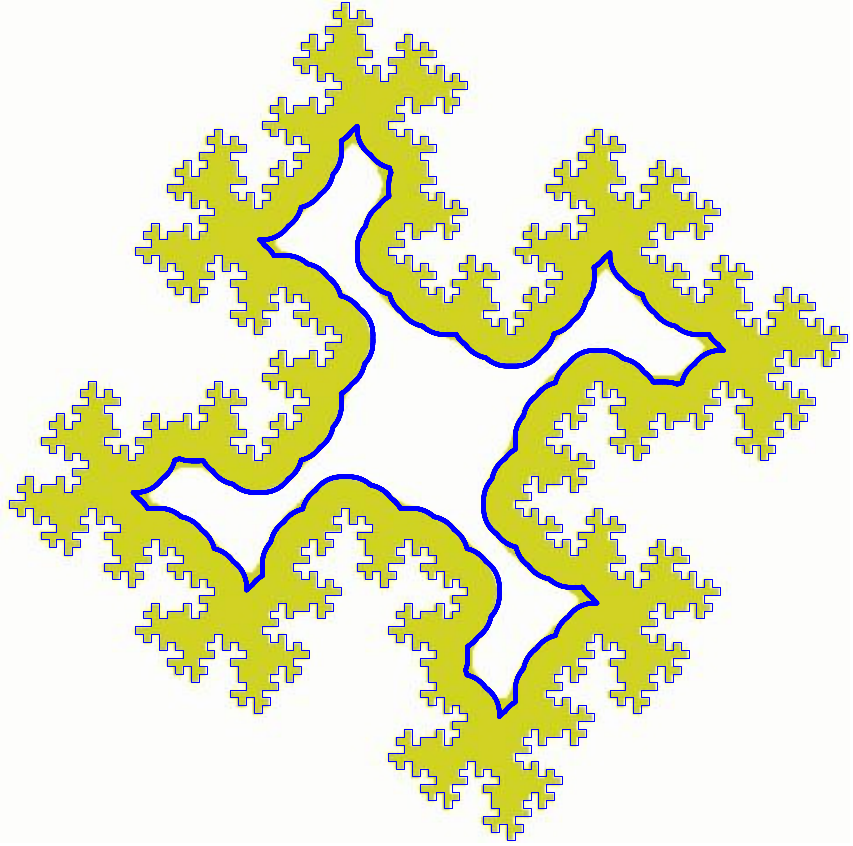}
 \end{center}
 \vspace{-2cm}\caption{\label{FigMinskS} The boundary of interior Minkowski sausage of width $\sqrt{D_+ t_0}$ (on the left, given by the black line) and width $2\sqrt{D_+ t_0}$ (on the right, given by the blue line) is compared to the isolines $u(x,t_0)=0.1$ of temperature at time $t_0=0.1$. ($u$ is the solution of the heat equation with Dirichlet boundary condition equal to $1$ and homogeneous initial data.) The isoline is the boundary between the red and blue region (left) and between the yellow and the white region (right), respectively.}
\end{figure}
 On the way to specify and formalize the de Gennes
 scaling argument~\cite{DE_GENNES-1982}, the first approximately asymptotic formula relating the heat content propagation and the volume of the interior Minkowski sausage for fractal boundaries under physically relevant boundary conditions  were given in~\cite{BARDOS-2016} (see also Section~\ref{SubsecDirichlet}
 for related results under other boundary conditions).  
 Moreover, for the case of a $C^3$-regular boundary, a two-term asymptotic formula has been obtained in~\cite{BARDOS-2016} involving the mean curvature of the boundary in the second term.

 In Section~\ref{SubsecDirichlet}, we review the existing results on the asymptotic expansion of the heat content in the case of a single medium with a uniformly hot boundary (i.e., with Dirichlet boundary condition equal to $1$ at all times on all of the boundary, as, e.g., \ in Fig.~\ref{FigMinskS}).
 In Section~\ref{SecMS} we give different approximations of the volume of the interior Minkowski sausage, focusing on the case of ($d$-dimensional) Minkowski measurable boundaries. We also give sufficient conditions on the convergence of domains that imply the convergence of the volumes of the parallel sets and the corresponding heat contents. The particular case of the von Koch snowflake domain in $\R^2$, which has a non-Minkowski measurable boundary, is also discussed.
 In Section~\ref{SubsecBardos}, we describe the analogous results in the de Gennes context obtained for the heat exchange of two media with a possibly resistive boundary~\cite{BARDOS-2016}. 
 First we detail in Section~\ref{SubsReg} the regular case of a boundary of class $C^3$ involving the two first asymptotic terms. 
 Then, we consider different regular and non-regular boundaries firstly in the particular case $D_+=D_-$ in Section~\ref{SSpartic}. 
 The used method is then applied to a transmission problem stated in~\eqref{prb1}--\eqref{prb1end}. 
 Using the asymptotic expansion of the heat content in the regular case involving the volumes of parallel sets, we pass to the limit in these formulas using the assumptions of Section~\ref{ssMinkMeas} to obtain the analogous results in the context $d$-dimensional Minkowski measurable and non-measurable (von Koch fractal in $\R^2$) boundaries in Section~\ref{sssArbC}.

 \subsection{Results for one medium with Dirichlet boundary condition}\label{SubsecDirichlet}

 Regarding the heat content asymptotics for domains with Dirichlet boundary conditions (i.e., the case of one medium, as $t\to 0+$), there are a number of classical results available in the literature for different classes of domains, including Riemannian manifolds \cite{vdBG15,VANDENBERG-1994},  smooth domains in $\R^n$ \cite{VAN_DEN_BERG-1994}, polygonal domains \cite{VAN_DEN_BERG-1990,vdBG16}  and even some fractal domains \cite{MR1831409,MR1785451,MR1620833,MR1335459,MR1305782}. Homogeneous Dirichlet boundary conditions model a constant temperature zero at the boundary. They do not model the heat exchange between the interior and the exterior media separated by the boundary, which we consider.

We highlight a few known asymptotic results. 
\begin{enumerate}
\item[(1)] If  $\Omega\subset \R^n$ is a bounded connected domain with a $C^3$-regular boundary $\del \Omega$, then (as $t\to 0+$) the heat content of $\Omega$ is determined up to the third order by 
$$
\displaystyle N(t)=\sqrt{t}\frac{2}{\sqrt{\pi}} \sH^{n-1}(\del \Omega)
-t\frac{n-1}{2}\int_{\del \Omega} H(x)\sH^{n-1}(\dx)+O(t^\frac{3}{2}),
$$
where $H(x)$ denotes the mean curvature at $x\in\del \Omega$, see \cite{VAN_DEN_BERG-1994}.
\item[(2)] If $\Omega\subset \R^2$ is a bounded connected domain with a polygonal boundary $\del \Omega$, then, according to \cite{VAN_DEN_BERG-1990}, the heat content, as $t\to 0+$, is given up to some exponential error by 
\begin{align*}
 N(t)&=\sqrt{t}\frac{2}{\sqrt{\pi}}\sH^{n-1}(\del \Omega)-t\sum_{i=1}^\ell c(\gamma_i)+O(e^{-\frac{r}{t}}),
\end{align*}
where $\ell$ is the number of vertices of $\del\Omega$, $\gamma_i$, $i=1,\ldots, \ell$, are the interior angles at these vertices and 
\begin{align*}
    c(\gamma_i)=4\int_0^{+\infty} \frac{ \sinh ((\pi-\gamma_i)z)}{\sinh(\pi z)\cosh(\gamma_i z)}\dz.
\end{align*}
\item[(3)]
If $\Omega\subset\R^n$, $n\geq 2$ satisfies a certain estimate regarding its Newtonian capacity (which holds e.g. if $\Omega\subset\R^2$ is simply connected), then, by \cite{MR1305782}, for all $t>0$,
\begin{align*}
   c_1 \mu(\del \Omega,c_2 \sqrt{t})\leq N(t) \leq c_3 t^{-1} \int_0^\infty e^{-r^2/(8t)} r\mu(\del \Omega, r) \dr, 
\end{align*}
where the positive constants $c_1$ and $c_2$ depend on $\Omega$. In particular, if $\del \Omega$ is Minkowski nondegenerate (see \eqref{Eqliminfsup} below for the definition) with Minkowski dimension $d\in]n-1,n[$, then there are constants $c>0$ and $t_0>0$ such that, for all $0<t\leq t_0$,
\begin{align*}
   c^{-1} t^{\frac{n-d}{2}}\leq N(t) \leq c t^{\frac{n-d}{2}}.
\end{align*}
\item[(4)] 
A refined estimate is obtained in \cite{FLECKINGER-1995} for $\Omega\subset \R^2$ being the triadic von Koch snowflake, for which the heat content is
given by ~
$$ 
N(t)=t^{\frac{2-d}{2}}p(\ln t) - t q(\ln t) + O(e^{-\frac{1}{1152t}}),$$
 as $t\to 0+$, where $d=\log 4/\log 3$ is the Minkowski dimension of $\del \Omega$ and $p$ and $q$ are continuous $(\ln 9)$-periodic functions;
\item[(5)] If $\Omega\subset\R^n$ is a \emph{self-similar fractal spray} (i.e., a certain countable disjoint union of scaled similar copies of a given basic open set), then in~\cite{LEVITIN-1996} the heat content was given up to the second-order term by 
$$
N(t)= t^{\frac{n-d}{2}} \nu+o(t^{\frac{n-d}{2}}),
$$
where, as before, $d$ denotes the Minkowski dimension of $\del \Omega$. Moreover, if the generating IFS is nonlattice, then $\nu$ is a constant, while in the case of a lattice IFS, $\nu$ is a positive continuous periodic function in $t$. 
\end{enumerate}
Observe the occurrence of the boundary length/surface area in the first asymptotic term in the classical cases (1) and (2) and of some curvature expression in the constant of the second asymptotic term in these two cases. 
Note that the order of the first term in these cases is $\frac 12=\frac {n-(n-1)}2$, where $n-1$ is the (Minkowski) dimension of the boundary and that the surface area may be regarded as the ($n-1$-dimensional) Minkowski content of $\del \Omega$. 
In the two media problem (with more general boundary conditions) studied in Section \ref{SubsecBardos}, we observe (for $C^3$-boundaries) a similar (but more complicated) dependence of the first asymptotic term on the parallel volume and hence on some notion related to the Minkowski content, see e.g.~formulas~(\ref{EqNilocFint2C}) and~(\ref{EqInfNtreg}). 

In the fractal cases (3)-(5), the order of the first term  is $\frac {n-d}{2}$, where again $d$ is the (Minkowski) dimension of $\del\Omega$. But in these fractal cases it is not easy to understand how precisely the geometry of the boundary influences the first (and the further) asymptotic terms. From the rather general estimates in (3) one can see that the growth of the parallel volume $\mu(\del \Omega, r)$ plays some role. We conjecture that for fractal boundaries, curvature measures of parallel sets—potentially in the form of fractal curvatures, as explored in Sections \ref{sec:curv-meas}-\ref{sec:rel-fc}, or some related notion—will influence even the leading asymptotic term of the heat content. This conjecture seems natural to us, considering the second terms in equations (1) and (2) and the scaling behavior of curvature measures of parallel sets of fractals.

\section{Approximation of the interior Minkowski sausage}\label{SecMS}
\subsection{Domains with Minkowski measurable boundary and their convergence}\label{ssMinkMeas}

Recall that a bounded set $F\subset\R^n$ is \emph{$d$-dimensional Minkowski measurable} (for some $d\in[0,n]$) if its $d$-dimensional Minkowski content is positive and finite. In the sequel, we assume Minkowski measurability of the boundary $\del \Omega$ \emph{relative to $\Omega$}, i.e.,
we assume that the limit
 	\begin{equation}\label{EqSteffen}
 		\mathcal{M}^d(\del \Omega,\Omega):=\lim_{r\to 0} \frac{\mu(\del \Omega,r)}{r^{n-d}}
 	\end{equation}
exists as a positive and finite number, where $\mu(\del \Omega,r)$ is the interior parallel set as defined in \eqref{EqDefmu}. %
Observe that this is equivalent to assuming
\begin{equation}\label{EqMainAss}
	\mu(\del \Omega,r)=\mathcal{M}^d(\del \Omega,\Omega)r^{n-d}+o(r^{n-d}), \quad \text{ as } r\to0+.
\end{equation}
Note further that, since $\del \Omega$ is a boundary, the exponent $d$, which is also called the \emph{Minkowski dimension} of $\del\Omega$ (\emph{relative to} $\Omega$), must be at least $n-1$. We exclude the case $d=n$ and assume that $d\in[n-1,n)$. The case of a regular boundary is readily included: here one has $d=n-1$ and $$\mathcal{M}^{n-1}(\del \Omega,\Omega)=\sH^{n-1}(\del \Omega)=\int_{\del \Omega} 1 \,\sH^{n-1}(d \sigma).$$
%
If $\del \Omega$ is at least $C^3$, then
\eqref{EqMainAss} can be written explicitly in local coordinates: 
\begin{align}
\mu(\del \Omega,r )&=\int_{\del \Omega}\sH^{n-1}(d\sigma)\int_0^r ds	|J(s,\sigma)|\notag\\
&\label{EqAppJFliss} =\sH^{n-1}(\del \Omega)r +o(r ) \quad \text{ as } r \to 0+,
\end{align}
where $J(s,\sigma)$ denotes the Jacobian of the mapping $(s,\sigma)\mapsto \sigma+s n$ and $n$ is the unit outer normal of $\Omega$ at $\sigma$, see also \eqref{Jacobian}.
\begin{proposition}
	Let $z\in (0,\infty)$, $d\in[n-1,n)$ and let $\Omega$ be a bounded domain in $\R^n$ with a $d$-dimensional Minkowski measurable boundary $\del \Omega$.
 	Then
 	\begin{equation}\label{EqExpmu1}
		\mu(\del \Omega,zr)=z^{n-d}\mu(\del \Omega,r)+o(r^{n-d}) \quad \text{ as } r\to 0+.
	\end{equation}
	
\end{proposition}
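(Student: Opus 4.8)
The plan is to reduce everything to the defining asymptotics \eqref{EqMainAss}. By Minkowski measurability of $\del\Omega$ relative to $\Omega$, we have $\mu(\del\Omega,r)=\mathcal{M}^d(\del\Omega,\Omega)\,r^{n-d}+o(r^{n-d})$ as $r\to 0+$; write $c:=\mathcal{M}^d(\del\Omega,\Omega)\in(0,\infty)$ for brevity. Fix $z\in(0,\infty)$. Applying \eqref{EqMainAss} with $zr$ in place of $r$ (legitimate since $zr\to 0+$ exactly when $r\to0+$), I get
\begin{equation*}
\mu(\del\Omega,zr)=c\,(zr)^{n-d}+o((zr)^{n-d})=z^{n-d}\bigl(c\,r^{n-d}\bigr)+o(r^{n-d}),
\end{equation*}
where in the last step I use that $o((zr)^{n-d})=z^{n-d}o(r^{n-d})=o(r^{n-d})$ for a fixed constant $z$.

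Next I substitute back the expansion for $\mu(\del\Omega,r)$ itself: from \eqref{EqMainAss}, $c\,r^{n-d}=\mu(\del\Omega,r)-o(r^{n-d})$, hence
\begin{equation*}
z^{n-d}\bigl(c\,r^{n-d}\bigr)=z^{n-d}\mu(\del\Omega,r)-z^{n-d}o(r^{n-d})=z^{n-d}\mu(\del\Omega,r)+o(r^{n-d}).
\end{equation*}
Combining the two displays yields $\mu(\del\Omega,zr)=z^{n-d}\mu(\del\Omega,r)+o(r^{n-d})$ as $r\to0+$, which is \eqref{EqExpmu1}.

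There is no real obstacle here; the statement is essentially a bookkeeping exercise with the Landau symbol, and the only points that deserve a word of care are (i) that $z$ is held fixed so that constant factors $z^{n-d}$ may be freely absorbed into or pulled out of $o(r^{n-d})$, and (ii) that one genuinely needs Minkowski \emph{measurability} (i.e.\ the two-sided expansion \eqref{EqMainAss} with a single constant $c$), not merely Minkowski nondegeneracy — the scaling relation \eqref{EqExpmu1} compares $\mu$ at two different radii and would fail with an error term of the form $o(r^{n-d})$ if one only had upper and lower bounds with different constants. I would therefore state the proof in exactly the two-step form above: first rescale the argument in \eqref{EqMainAss}, then re-expand the leading term via \eqref{EqMainAss} again. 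If one wishes to be fully explicit, one can phrase it with $\varepsilon$'s: given $\varepsilon>0$ choose $r_0$ so that $|\mu(\del\Omega,s)-c\,s^{n-d}|\le \varepsilon s^{n-d}$ for all $0<s\le r_0$; then for $0<r\le r_0/\max(1,z)$ both $r$ and $zr$ lie in the good range, and a triangle-inequality estimate gives $|\mu(\del\Omega,zr)-z^{n-d}\mu(\del\Omega,r)|\le \varepsilon\,(z^{n-d}+z^{n-d})r^{n-d}=2z^{n-d}\varepsilon\, r^{n-d}$, which is the claim since $z$ is fixed.
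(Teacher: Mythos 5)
Your proof is correct and is essentially the same argument as the paper's: the paper simply notes that Minkowski measurability makes $\left|\frac{\mu(\del\Omega,zr)}{(zr)^{n-d}}-\frac{\mu(\del\Omega,r)}{r^{n-d}}\right|=o(1)$ and multiplies by $(zr)^{n-d}$, which is exactly your two-step expansion with the constant $c$ cancelled. Your closing remarks on why measurability (a single limit constant) rather than mere nondegeneracy is needed, and on $z$ being fixed, are accurate and in the spirit of the paper.
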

\begin{proof}
	The assumed Minkowski measurability implies that
	$$\left|\frac{\mu(\del \Omega,zr)}{(zr)^{n-d}} -\frac{\mu(\del \Omega,r)}{r^{n-d}} \right|= o(1), \hbox{ as } r \to 0+.$$
	Multiplying by $(zr)^{n-d}$, yields the assertion~\eqref{EqExpmu1}.
\end{proof}
Let us recall the definition of an $(\eps,\infty)$-domain:
\begin{definition}
	\label{DefEDD}
	Let $\eps > 0$. A bounded domain $\Omega\subset \R^n$ is called an \emph{$(\eps,\infty)$-uniform domain} if for all $x, y \in \Omega$ there is a rectifiable curve $\gamma\subset \Omega$ with length $\ell(\gamma)$ joining $x$ to $y$ and satisfying
	\begin{enumerate}
		\item[(i)] $\ell(\gamma)\le \frac{|x-y|}{\eps}$, and
		\item[(ii)] $d(z,\del \Omega)\ge \eps |x-z|\frac{|y-z|}{|x-y|}$ for $z\in \gamma$.
	\end{enumerate}
\end{definition}
In the sequel we impose throughout the following hypothesis on a domain $\Omega$ and a sequence $(\Omega_i)_{i\in\N}$ of domains approximating $\Omega$ from inside. 
\begin{assumption}\label{Assump}
	$\Omega\subset \R^n$ is a fixed bounded $(\eps,\infty)$-domain, and there exists an increasing sequence of $(\eps,\infty)$-subdomains $(\Omega_i)_{i\in\N}$ of $\Omega$ (with the same fixed $\eps>0$) such that all $\Omega_i$ and $\Omega$ contain a fixed nontrivial ball $B_{r_0}(x)$ and such that
	$\Omega_i\nearrow \Omega$, $\Omega=\cup_{i\in \N} \Omega_i$.
	In addition, the boundaries $\del \Omega_i$ and $\del \Omega$ are assumed to be supports of finite Borel measures $\nu_i$ and $\nu$, respectively, such that $\nu$ is the weak limit of the measures $\nu_i$ as $i\to +\infty$, and there are constants $d\in[n-1,n[$, $c_{n-1}>0$ and $c_d>0$ such that for all $i\in \N$ the measures $\nu_i$ satisfy 
	\begin{equation}\label{EqIURegmeas}
		\nu_i(B_r(x))\le c_{n-1}r^{n-1}, \quad \nu_i(\overline{B_r(x)})\ge c_dr^d, \; x\in \del \Omega_i, \; 0<r\le 1.
	\end{equation}
\end{assumption}
This kind of assumption was previously used in~\cite{DEKKERS-2022}.
We recall now the definition of convergence in the Hausdorff metric for open sets, see e.g.~\cite[Definition 2.2.8]{HENROT-e}.
\begin{definition} \label{def:d_H}
    The \emph{Hausdorff distance} between two compact sets $K, L\subset \mathbb{R}^n$ is defined as 
\[d_H(K,L):=\inf\{\alpha>0: K\subset L_\alpha \text{ and } L\subset K_\alpha\}.\]  
A sequence $(K_m)_m$ of compact sets $K_m\subset \mathbb{R}^n$ is said to \emph{converge} to a compact set $K\subset \mathbb{R}^n$ \emph{in the Hausdorff sense} if $\lim_{m\to \infty} d_H(K_m,K)=0$. Let $D\subset \mathbb{R}^n$ be a bounded open set. A sequence $(\Omega_m)_m$ of open sets $\Omega_m\subset D$ is said to \emph{converge} to an open set $\Omega\subset D$ \emph{in the Hausdorff sense} if 
\[d_H(\overline{D}\setminus \Omega_m,\overline{D}\setminus \Omega)\to 0\quad \text{ as }\quad m\to \infty,\] 
Note that this convergence does not depend on the choice of $D$, see e.g.~\cite[Remark 2.2.11]{HENROT-e}.
\end{definition}

Let us also prove the following useful result:
\begin{proposition}\label{PropConvProp}
	Let $\Omega$ be a bounded domain in $\R^n$ and let $(\Omega_i)_{i\in\N}$ be an increasing sequence of subdomains (i.e. with $\Omega_i\subseteq\Omega_{i+1}\subseteq\Omega$) converging to $\Omega$ in the Hausdorff metric. 
Then there exists, for each $\delta>0$, some index $i_0=i_0(\delta)\in \N$ such that, for all $i\geq i_0$ and all $r>0$,
		\begin{equation}\label{EqConvMinkS}
		\left|\mu(\del\Omega_i,r)-\mu(\del\Omega,r)\right|\leq\delta.
	\end{equation}
Moreover, if $\Omega$ and $\Omega_i$ are $(\eps,\infty)$-domains satisfying Assumption~\ref{Assump}, then the measure $\nu$ satisfies \eqref{EqIURegmeas}.
	\end{proposition}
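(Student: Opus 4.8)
The plan is to establish the two assertions in turn. Throughout write $\rho(x):=\dist(x,\del\Omega)$ and $\rho_i(x):=\dist(x,\del\Omega_i)$, so that $\mu(\del\Omega,r)=\lambda_n(\{x\in\Omega:\rho(x)<r\})$ and $\mu(\del\Omega_i,r)=\lambda_n(\{x\in\Omega_i:\rho_i(x)<r\})$, and set $\eta_i:=d_H(\overline D\setminus\Omega_i,\overline D\setminus\Omega)$, which tends to $0$ by assumption. The first thing I would record are the two comparisons
\[\rho(x)-\eta_i\le\rho_i(x)\le\rho(x)\qquad(x\in\Omega_i).\]
The right inequality is immediate from $\R^n\setminus\Omega\subseteq\R^n\setminus\Omega_i$. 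For the left one, pick a nearest point $y\in\del\Omega_i$ of $x$; since $\del\Omega_i\subseteq\overline D\setminus\Omega_i$, Hausdorff closeness yields a point $z\in\overline D\setminus\Omega\subseteq\R^n\setminus\Omega$ with $|y-z|\le\eta_i$, whence $\rho(x)=\dist(x,\R^n\setminus\Omega)\le|x-y|+|y-z|\le\rho_i(x)+\eta_i$.

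Next I would split $\Omega=\Omega_i\cup(\Omega\setminus\Omega_i)$ and use that $\rho_i\le\rho$ forces $\1[\rho<r]\le\1[\rho_i<r]$ on $\Omega_i$, to obtain the identity
\[\mu(\del\Omega,r)-\mu(\del\Omega_i,r)=\underbrace{\lambda_n(\{x\in\Omega\setminus\Omega_i:\rho(x)<r\})}_{=:P_i(r)\ \ge\ 0}\ -\ \underbrace{\lambda_n(\{x\in\Omega_i:\rho_i(x)<r\le\rho(x)\})}_{=:Q_i(r)\ \ge\ 0},\]
so that $|\mu(\del\Omega,r)-\mu(\del\Omega_i,r)|\le\max\{P_i(r),Q_i(r)\}$ for every $r>0$. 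Here $P_i(r)\le\lambda_n(\Omega\setminus\Omega_i)=\lambda_n(\Omega)-\lambda_n(\Omega_i)$, which tends to $0$ uniformly in $r$ because $\Omega_i\nearrow\Omega$ and $\Omega$ is bounded. On the set counted by $Q_i(r)$ one has, by the left comparison above, $r\le\rho(x)<r+\eta_i$, hence $Q_i(r)\le\mu(\del\Omega,r+\eta_i)-\mu(\del\Omega,r)$.

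It thus remains to prove that $r\mapsto\mu(\del\Omega,r)$ is uniformly continuous on $[0,\infty)$ — this is the only genuinely technical point. The function is non-decreasing, left-continuous, bounded by $\lambda_n(\Omega)$ and constant for $r>\diam\Omega$, so it suffices to show it is continuous, i.e.\ that $\lambda_n(\{x\in\Omega:\rho(x)=r\})=0$ for every $r>0$. This follows from the fact that $\rho=\dist(\cdot,\R^n\setminus\Omega)$ is $1$-Lipschitz with $|\nabla\rho|=1$ almost everywhere on $\{\rho>0\}=\Omega$ (a standard property of distance functions to a closed set), whereas $\nabla\rho=0$ almost everywhere on any level set of positive Lebesgue measure; hence no such level set can exist. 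Given $\delta>0$, uniform continuity provides $\eta^*>0$ with $\mu(\del\Omega,r+h)-\mu(\del\Omega,r)<\delta$ whenever $0\le h\le\eta^*$ and $r\ge0$; choosing $i_0$ so that $\lambda_n(\Omega\setminus\Omega_i)<\delta$ and $\eta_i\le\eta^*$ for all $i\ge i_0$ then gives $\max\{P_i(r),Q_i(r)\}<\delta$ for all $r>0$, i.e.\ \eqref{EqConvMinkS}.

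For the last claim, suppose Assumption~\ref{Assump} holds. The starting observation is that every $x\in\del\Omega$ is approximated by boundary points of the $\Omega_i$: since $x\notin\Omega\supseteq\Omega_i$ while $B_\eps(x)$ meets $\Omega=\bigcup_j\Omega_j$ and hence meets $\Omega_i$ for all large $i$, the segment from $x$ to a point of $\Omega_i\cap B_\eps(x)$ must cross $\del\Omega_i$ inside $B_\eps(x)$; thus $\dist(x,\del\Omega_i)\to0$ and we may pick $x_i\in\del\Omega_i$ with $\eps_i:=|x_i-x|\to0$. Fix $x\in\del\Omega$ and $r\in(0,1]$. For the upper bound, when $r<1$ and $i$ is large we have $B_r(x)\subseteq B_{r+\eps_i}(x_i)$ with $r+\eps_i\le1$, so \eqref{EqIURegmeas} gives $\nu_i(B_r(x))\le c_{n-1}(r+\eps_i)^{n-1}$; since $B_r(x)$ is open and $\nu_i\to\nu$ weakly, the Portmanteau inequality yields $\nu(B_r(x))\le\liminf_i\nu_i(B_r(x))\le c_{n-1}r^{n-1}$, and the case $r=1$ follows by letting $r\uparrow1$. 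For the lower bound, when $i$ is large $\overline{B_{r-\eps_i}(x_i)}\subseteq\overline{B_r(x)}$, so $\nu_i(\overline{B_r(x)})\ge c_d(r-\eps_i)^d$; since $\overline{B_r(x)}$ is compact, Portmanteau gives $\nu(\overline{B_r(x)})\ge\limsup_i\nu_i(\overline{B_r(x)})\ge c_d r^d$. Hence $\nu$ satisfies \eqref{EqIURegmeas}, and the main obstacle of the whole proof is indeed the continuity of the interior parallel volume function, which rests on the a.e.\ eikonal identity $|\nabla\dist(\cdot,\del\Omega)|=1$ inside $\Omega$.
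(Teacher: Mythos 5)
Your proof is correct, and at the one technically delicate point it takes a genuinely different route from the paper's. The reduction is the same in both arguments: the difference $\mu(\del\Omega,r)-\mu(\del\Omega_i,r)$ is split into the two set differences $\Omega_r\setminus\Omega_{i,r}$ (controlled by $\lambda_n(\Omega\setminus\Omega_i)\to 0$) and $\Omega_{i,r}\setminus\Omega_r$, which the Hausdorff closeness pushes into the annulus $\Omega_{r+\eta_i}\setminus\Omega_r$ --- your comparison $\rho(x)-\eta_i\le\rho_i(x)\le\rho(x)$ and your sets $P_i,Q_i$ are exactly these two pieces. The divergence is in proving that $\lambda_n(\Omega_{r+\eps}\setminus\Omega_r)$ is small uniformly in $r$. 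The paper treats small $r$ separately and, for $r$ bounded away from $0$, invokes the uniform surface-area bound $\sH^{n-1}(\del(\Omega_r)\cap\Omega)\le S$ from \cite{RW10} together with the a.e.\ identity $\frac{d}{dr}\lambda_n(\Omega_r)=\sH^{n-1}(\del(\Omega_r)\cap\Omega)$, which yields the quantitative Lipschitz bound $S\eps$. You instead prove outright that $r\mapsto\mu(\del\Omega,r)$ is continuous --- every level set of $\dist(\cdot,\del\Omega)$ in $\Omega$ is Lebesgue null because $|\nabla\dist(\cdot,\del\Omega)|=1$ a.e.\ there while the gradient of a Lipschitz function vanishes a.e.\ on any level set --- and then upgrade to uniform continuity using monotonicity, boundedness and eventual constancy. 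This is more elementary and self-contained (no external citation), at the price of losing the explicit modulus of continuity the paper's route provides away from $r=0$. For the second assertion the paper simply cites \cite{HINZ-2021-1}, whereas you give a complete Portmanteau argument (open balls for the upper bound, closed balls for the lower bound, after approximating each $x\in\del\Omega$ by points $x_i\in\del\Omega_i$); this is correct and makes the proposition fully self-contained.
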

\begin{proof}
For $r>0$ let $\Omega_r:=\{y\in \Omega|\, d(y,\del \Omega)<r\}$ and $\Omega_{i,r}:=\{x\in \Omega_i|\, d(x,\del \Omega_i)<r\}$.
Let $\delta>0$. Observe that, for all $r>0$,
\begin{equation*}
		|\lambda_n(\Omega_{i,r})-\lambda_n(\Omega_r)|\leq \max\left\{\lambda_n(\Omega_{r}\setminus\Omega_{i,r}),\lambda_n(\Omega_{i,r}\setminus\Omega_r)\right\}.
	\end{equation*}
Since $\Omega_i\subset \Omega$, we have $\Omega_r\setminus \Omega_{i,r}\subset \Omega\setminus\Omega_i$, and $\Omega_i\nearrow\Omega$ (in the Hausdorff metric) now implies $\lambda_n(\Omega\setminus\Omega_i)\searrow 0$, as $i\to\infty$. Hence, one can find $i_1\in\N$ such that $\lambda_n(\Omega\setminus\Omega_i)\leq\delta$ and thus, in particular,
$$
\lambda_n(\Omega_r\setminus\Omega_{i,r})\leq\delta \quad \text{ for all } i\geq i_1 \text{ and } r>0.
$$
To bound the second volume $\lambda_n(\Omega_{i,r}\setminus\Omega_r)$ in the above maximum, observe first that it equals $0$ for all $r>\rho$, where $\rho:=\max\{\dist(x,\del \Omega): x\in\Omega\}$ denotes the inradius of $\Omega$.

Furthermore, for any $\eps>0$, we can find $i_2=i_2(\eps)\in\N$ such that for all $i\geq i_2$ and all $r\in(0,\rho]$
$$
\Omega_{i,r}\setminus\Omega_r\subset \Omega_{r+\eps}\setminus\Omega_r.
$$
Hence it is enough to show that for the given $\delta>0$ there is some $\eps>0$ 
such that
\begin{align} \label{eq:parbound}
\lambda_n(\Omega_{r+\eps}\setminus\Omega_r)\leq\delta \text{ for all } r\in(0,\rho].
\end{align}
(Then obviously the same holds for any $\eps'<\eps$.) For a proof of \eqref{eq:parbound}, choose $r_1=r_1(\delta)>0$ small enough that $\lambda_n(\Omega_{2r_1})\leq\delta$ (which is possible, since $\lambda_n(\Omega_{r})\searrow 0$ as $r\searrow 0$). Then we have for all $\eps,r\in(0,r_1]$
$$
\lambda_n(\Omega_{r+\eps}\setminus\Omega_r)\leq\lambda_n(\Omega_{2r_1})\leq\delta,
$$
showing the assertion for all $r<r_1$.
Moreover, by \cite[Cor.~4.2]{RW10}, there is some constant $S>0$ such that, for all $r\in[r_1,\rho]$, $\sH^{n-1}(\del(\Omega_r)\cap \Omega)\leq S$.
Recalling that the derivative of the volume function $t\mapsto\lambda_n(\Omega_t)$ at some $t=r$ equals the surface area $\sH^{n-1}(\del(\Omega_r)\cap\Omega)$ for a.e.\ $r>0$, see e.g.~\cite[Cor.~2.5]{RW10} (and that $\del(\Omega_r)\cap\Omega=\emptyset$ for $r>\rho$), we conclude that, for all $r\in[r_1,\rho]$ and any $\eps>0$,
$$
\lambda_n(\Omega_{r+\eps}\setminus\Omega_r)=\int_r^{r+\eps} \sH^{n-1}(\del(\Omega_t)\cap\Omega) \,d t\leq S\cdot\eps.
$$
Hence, for any $\eps\leq\eps_1:=S^{-1}\delta$, the desired bound
$
\lambda_n(\Omega_{r+\eps}\setminus\Omega_r)\leq \delta
$
holds for all $r\in[r_1,\rho]$.
We conclude that \eqref{eq:parbound} holds uniformly in $r\in(0,\rho]$ for any $\eps\leq\min\{\eps_1,r_1\}$.

The fact that $\nu$ satisfies \eqref{EqIURegmeas} is proven in~\cite[Proposition~3.3 (ii)]{HINZ-2021-1}. 
\end{proof}

	\begin{remark}\label{RemC3App}
    Let us recall three types of convergence for domains contained in a bounded domain $D\subset\R^n$. Let $\Omega$ and $\Omega_m$, $m\in\N$, be subdomains of $D$. 
    \begin{enumerate}
  \item[(i)] For the convergence of $\Omega_m\to\Omega$ in the Hausdorff sense see Definition~\ref{def:d_H}.
 \item[(ii)] $(\Omega_m)_{m\in \N}$ is said to converge to $\Omega$  \emph{in the sense of compacts}, if 
 for all compact subsets $K$ of $\Omega$ and all compact subsets $L$ of $D\setminus \overline{\Omega}$ it follows that $K\subset \Omega_m$ and $L\in D\setminus \overline{\Omega_m}$ for all sufficiently large $m$,
 \item[(iii)] $(\Omega_m)_{m\in\N}$ is said to converge to $\Omega$ \emph{in the sense of characteristic functions}, if
 $$\chi_{\Omega_m}\to \chi_{\Omega} \text{ as } m \to +\infty \quad \text{in  }L^p_{loc}(\R^n) \quad \text{for all } p\in [1,\infty[.$$
 Here $\chi_{A}(=\1_A)$ denotes the  characteristic function of a set $A$ taking value $1$ for $x\in A$ and $0$ otherwise.
 \end{enumerate}
 Once a sequence $(\Omega_i)$ of domains is monotonically increasing towards $\Omega$ (s.t. $\Omega=\cup_i \Omega_i$, see Assumption~\ref{Assump}), then this sequence converges to $\Omega$ by these tree types of domain convergences~\cite{CLARET-2024-1}.  
	The compactness corresponding to these three types of domain convergences,     
    and to the weak convergence of the boundary measures of the class of domains introduced in Proposition~\ref{PropConvProp} is proven in~\cite{HINZ-2021-1}. Using the notation of~\cite{HINZ-2021-1}, 
we will write 
$$
U_{ad}(D,B_{r_0}(x),\eps,d,n-1,c_{n-1},c_d)
$$ 
for the family of all domains $\Omega$ contained in the domain $D\subset\R^d$, containing the ball $B_{r_0}(x)$ with center $x\in D$ and radius $r_0$, and satisfying Assumption~\ref{Assump} for  the constants $\eps$ (from the condition to be an $(\eps,\infty)$-domain), $n$, $d$, $c_{n-1}$ and $c_d$ (from condition \eqref{EqIURegmeas} for the boundary measures). 
\end{remark}
\begin{proposition}\label{PropC3conv}
	For any bounded domain $\Omega$ in $\R^n$
    of the class 
 $$
 U_{ad}(D,B_{r_0}(x),\eps,d,n-1,c_{n-1},c_d)$$ as introduced above, 
 there exists an increasing sequence $(\Omega_i)_{i\in \N}$ of $C^3$-domains $\Omega_i\subset \Omega$ satisfying Assumption~\ref{Assump} such that
			\eqref{EqConvMinkS} holds uniformly in $\eta\in]0,\eta_0[$ for some $\eta_0>0$ (depending on the diameter of $\Omega$) and such that, in addition, for all $T>0$ the corresponding heat contents $N_i(t)=\lambda_n(\Omega_i)-\int_{\Omega_i}u_{i,+}(x,t)\dx$ converge uniformly in $t\in ]0,T[$ to $N(t)=\lambda_n(\Omega)-\int_{\Omega}u_{+}(x,t)\dx$ as $i\to +\infty$.
	\end{proposition}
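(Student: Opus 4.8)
The plan is to construct the approximating sequence of $C^3$-domains in two stages — first obtaining the geometric/parallel-volume approximation, then upgrading it to convergence of heat contents — and to reduce everything to results already available in the excerpt and in the cited literature. The starting point is the class membership $\Omega\in U_{ad}(D,B_{r_0}(x),\eps,d,n-1,c_{n-1},c_d)$; by the compactness results of \cite{HINZ-2021-1} recalled in Remark~\ref{RemC3App}, one can find \emph{some} increasing sequence of admissible subdomains converging to $\Omega$ in the Hausdorff sense, but the issue is to make them $C^3$-smooth while preserving Assumption~\ref{Assump}. Here I would use a mollification/sublevel-set construction: take a smooth exhaustion function on $\Omega$ (e.g. a mollified signed distance to $\del\Omega$), and let $\Omega_i$ be a sublevel set $\{f>1/i\}$; by Sard's theorem these can be chosen with $C^3$ (indeed $C^\infty$) boundary for a.e.\ level, they increase to $\Omega$, and each contains the fixed ball $B_{r_0}(x)$ for $i$ large. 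The $(\eps,\infty)$-uniformity with the \emph{same} $\eps$ for the $\Omega_i$ is the first delicate point: one appeals to the fact (cf.\ \cite{CLARET-2024-1, DEKKERS-2022}) that a monotone smooth exhaustion of an $(\eps,\infty)$-domain can be taken within the same uniformity class, possibly after mild reparametrisation of the levels; similarly the boundary-measure bounds \eqref{EqIURegmeas} for $\nu_i=\sH^{n-1}\lfloor\del\Omega_i$ follow from standard comparison of surface measures of nearby smooth sublevel sets with the ambient regularity of $\del\Omega$, and weak convergence $\nu_i\rightharpoonup\nu$ from the Hausdorff convergence plus these uniform mass bounds.

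The second step is the uniform parallel-volume convergence. Since $\Omega_i\nearrow\Omega$ in the Hausdorff metric, Proposition~\ref{PropConvProp} applies verbatim and gives, for each $\delta>0$, an index $i_0$ with $|\mu(\del\Omega_i,\eta)-\mu(\del\Omega,\eta)|\le\delta$ for all $i\ge i_0$ and \emph{all} $\eta>0$ — in particular uniformly in $\eta\in(0,\eta_0)$ for any fixed $\eta_0$ bounded by the inradius/diameter of $\Omega$. So \eqref{EqConvMinkS} is essentially free once the sequence has been produced; no new estimate is needed there.

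The third step — convergence of heat contents — is where the real work lies. The heat content is $N_i(t)=\lambda_n(\Omega_i)-\int_{\Omega_i}u_{i,+}(x,t)\,dx$, where $u_{i,+}$ solves the heat equation on $\Omega_i$ with the relevant (Neumann-type, thermo-isolated-exterior) boundary condition and the given initial data. The strategy is: (a) $\lambda_n(\Omega_i)\to\lambda_n(\Omega)$ trivially by monotone convergence; (b) for the integral terms, use that the solutions $u_{i,+}$, extended by a fixed reference value (or by zero on $\Omega\setminus\Omega_i$, depending on the boundary condition in play) converge to $u_+$ — this is the domain-continuity / stability of the heat semigroup under $(\eps,\infty)$-domain convergence, for which Mosco convergence of the associated Dirichlet forms is the natural tool. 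Uniform-in-$t\in(0,T)$ convergence is obtained from an equicontinuity estimate: the $L^1(\Omega)$-norms of $u_{i,+}(\cdot,t)$ are uniformly bounded and uniformly (in $i$) equicontinuous in $t$ on $(0,T)$ by parabolic smoothing and a uniform heat-kernel upper bound valid on the whole admissible class (again from the uniform $(\eps,\infty)$-geometry and the $d$-upper-regularity of the boundary measures, as in \cite{HINZ-2021-1, DEKKERS-2022}); combining pointwise convergence for each fixed $t$ with this equicontinuity and the compactness of $[\tau,T]$ gives uniform convergence on $(0,T)$, the behaviour near $t=0$ being controlled by the uniform smallness of $\|\lambda_n(\Omega_i)-\int u_{i,+}\|$ as $t\to0+$ (both $N_i(t)$ and $N(t)$ tend to $0$ uniformly).

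The main obstacle is Step~3, specifically proving the convergence of the heat-content integrals \emph{uniformly in $t$} down to $t=0+$: pointwise-in-$t$ convergence follows from Mosco convergence of the forms, but uniformity requires a heat-kernel estimate that is uniform over the whole admissible class $U_{ad}$, and near $t=0$ one must argue that the difference is small not because the solutions are close but because each heat content is itself small — this is where the uniform $d$-upper regularity in \eqref{EqIURegmeas}, transported along the sequence, is essential. The smoothing step (making the boundaries $C^3$ while keeping the same $\eps$ and the same constants $c_{n-1},c_d$) is the secondary difficulty; one should expect to lose nothing essential but to need a careful quantitative mollification argument, likely citing \cite{CLARET-2024-1} for the fact that such a monotone $C^3$-exhaustion exists within the class.
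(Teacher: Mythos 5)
Your overall architecture matches the paper's: build a monotone exhaustion of $\Omega$ by $C^3$-subdomains staying in the admissible class, get \eqref{EqConvMinkS} for free from Proposition~\ref{PropConvProp}, and deduce convergence of the heat contents from $L^2$-Mosco convergence of the quadratic forms. The one place where you genuinely diverge is the construction of the exhaustion: you propose sublevel sets of a mollified signed distance function, with Sard's theorem supplying regular levels, whereas the paper builds an interior dyadic-grid sequence of Lipschitz subdomains (following \cite{CLARET-2024-1}), choosing at each grid scale the largest admissible Lipschitz subdomain containing the previous one, and only then smooths each Lipschitz boundary to $C^3$ via \cite[Thm~3.42]{AMBROSIO-2000}. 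The paper's route has the advantage that monotonicity and membership in $U_{ad}$ with the \emph{same} constants $\eps$, $c_{n-1}$, $c_d$ are built into the selection at each stage, whereas in your sublevel-set construction the preservation of the $(\eps,\infty)$-constant and of the two-sided boundary-measure bounds \eqref{EqIURegmeas} is exactly the point you flag as delicate and defer to the literature; your route buys a cleaner smoothness argument (no separate Lipschitz-to-$C^3$ step) at the cost of a harder verification of class membership. On the heat-content step you are actually more careful than the paper: the paper simply asserts that Mosco convergence plus positivity of $u_{i,+}$ gives $\|u_i(t)\|_{L^1(\Omega_i)}\to\|u(t)\|_{L^1(\Omega)}$ uniformly in $t\in(0,T)$, while you correctly identify that uniformity down to $t=0+$ needs an extra argument (equicontinuity in $t$ from uniform heat-kernel bounds over the class, plus the fact that both $N_i(t)$ and $N(t)$ vanish uniformly as $t\to0+$ thanks to the uniform $d$-upper regularity). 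That additional care addresses a point the paper leaves implicit, so nothing in your plan would fail; it just shifts which step carries the technical burden.
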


\begin{proof}
	Without loss of the generality, let us consider a prefractal sequence of Lipschitz boundaries converging monotonously to a fractal or a $d$-set one, for an example, see~\cite{CAPITANELLI-2011,DEKKERS-2022}. The construction is performed in a way to keep all parameters $B_{r_0}(x)$, $\eps$, $d$, $c_d$, $c_{n-1}$ fixed. To construct such a sequence, it is always possible to make an interior dyadic grid of $\Omega$ with a $d$-set boundary first with the minimal length (between two boundary edges) equal to $\ell$, see~\cite{CLARET-2024-1}. Then we take the largest Lipschitz domain $\Omega_1$ belonging $U_{ad}(\Omega,B_{r_0}(x),\eps,d,n-1,c_{n-1},c_d)$ included in (or equal to) the largest interior $\ell$-grid-based subdomain. The next step is to take $\ell/2$ and to take the corresponding dyadic grid of $\Omega$. The largest subdomain from $U_{ad}(\Omega,B_{r_0}(x),\eps,d,n-1,c_{n-1},c_d)$ containing $\Omega_1$ is denoted by $\Omega_2$. Dividing by $2$ at each step the size of the squares, we obtain a monotonous increasing sequence of Lipschitz subdomains converging by three types of convergences to the initial domain $\Omega$ with the weak converge of the corresponding boundary volume measures. Then each Lipschitz boundary can be approximated by a regular boundary by~\cite[Thrm 3.42, p.147]{AMBROSIO-2000}. It means for each bounded domain with a $d$-set boundary, there exists a $C^3$ sequence of domains, which satisfies the hypothesis of Proposition~\ref{PropConvProp} and ensures~\eqref{EqConvMinkS}.
	
	For the constructed sequence of subdomains $(\Omega_i)_{i\in \N}$, by the $L^2$-Mosco convergence of the quadratic forms on $U_{ad}(\Omega,B_{r_0}(x),\eps,d,n-1,c_{n-1},c_d)$ (see~\cite{claret:hal-04316274}  for a detailed proof), and thanks to the positivity of $u_{i,+}$ as the heat solutions on $\Omega_i$, we obtain that
	$$
 \int_{\Omega_i} u_i(x,t) \dx=\|u_i(t)\|_{L^1(\Omega_i)}\to \|u(t)\|_{L^1(\Omega)},\quad \hbox{as } i\to+\infty,
 $$
	uniformly in $t\in ]0,T[$ for all $T>0$.	
\end{proof}
%

\begin{remark}\label{RemConvRests} 
	If there is a sequence of $C^3$-domains $(\Omega_i)_{i\in \N}$ (as in Proposition~\ref{PropC3conv}) converging to $\Omega$ with a $d$-set boundary such that, as $i\to +\infty$,
$\mu(\del \Omega_i,r)\to \mu(\del \Omega,r)$ converges  uniformly in $r$ for $r$ small enough, 
then their asymptotic decomposition on $r$, having the same general structure~\eqref{EqMainAss}, converges:
\begin{multline}\label{EqSeqVolumes}
	\mu(\del \Omega_i,r)=\mathcal{H}^{n-1}(\del \Omega_i)r^{n-(n-1)}+o_i(r^{n-(n-1)})\\
	\to \mu(\del \Omega,r)=\mathcal{M}^d(\del \Omega,\Omega)r^{n-d}+o(r^{n-d}).
\end{multline}
In particular, this convergence ensures that for $i\to +\infty$ the remainder terms $o_i(r^{n-(n-1)})$ converge to the remainder term for $\Omega$, $o(r^{n-d})$.
\end{remark}

\subsection{Minkowski nondegenerate boundaries}\label{SubNonMM}

Let $\Omega$ be a bounded domain of $\R^n$ with a $d$-set boundary, i.e. there exists a positive
Borel measure $\nu$ (a $d$-measure) with support $\del \Omega$ such that for some positive constants $c_1$, $c_2>0$,
\begin{equation*}
c_1r^d\le \nu(\del \Omega\cap B_r(x))\le c_2 r^d, \quad \hbox{ for } ~ x\in \del \Omega,\; 0<r\le 1,
\end{equation*}
where $B_r(x)\subset \R^n$ denotes the Euclidean ball centered at $x$
and of radius~$r$.
In general, $d$-sets  cannot be expected to be ($d$-dim.) Minkowski measurable, i.e., the limit in~\eqref{EqSteffen} might not exist. However, in the setting of domains with piecewise self-similar boundaries (as studied in Sections \ref{fractal bound}- \ref{sec:rel-fc} below), it is ensured that the Minkowski dimension of the boundary is $d$ and, moreover, that $\del \Omega$ is \emph{($d$-dimensional) Minkowski nondegenerate}, i.e., 
%
%
\begin{equation}\label{Eqliminfsup}
	0<\liminf_{r\to 0}\frac{\mu(\del \Omega,r)}{r^{n-d}} \le \limsup_{r\to 0}\frac{\mu(\del \Omega,r)}{r^{n-d}}<+\infty.
\end{equation}

%
In \cite[Theorem 1.1]{LapidusPearse2006} (see also \cite{Lapidus-P-2008,Lapidus-P-2010,Lapidus-P-W-2011,LapidusPearse}) it has been shown that the parallel volume 
the Koch snowflake domain $\Omega\subset \R^2$
has the form
\begin{equation}\label{eq-LP1}
\mu(\del \Omega,r )=G_1(r )r ^{2-d}+G_2(r )r ^{2}
\end{equation}
where $d=\frac{\log4}{\log3}$ is the Minkowski dimension of $\del\Omega$  and $G_1,G_2$ are bounded multiplicatively periodic functions of multiplicative period $3$. 
This formula is interesting because of the presence of oscillations in the main and in the second term and also because of the absence of a term of order $1$.

From what is known about the Minkowski content of self-similar sets (satisfying the open set condition), see \cite{Ga00}, one would expect a similar expansion of the parallel volume for any domain in $\R^n$ with piecewise self-similar boundary, possibly with finitely many oscillating terms instead of two and likely with some remainder term. Clearly, the exponent of the leading term will always be $n-d$, where $d$ is the Minkowski dimension of $\del \Omega$. It is well known that self-similar sets are Minkowski non-degenerate and this should carry over to piecewise self-similar boundaries. This would imply in particular, that
\begin{equation}\label{eq-LP1-general}
\mu(\del \Omega,r )=G(r )r^{n-d}+o(r^{n-d}),
\end{equation}
as $r\to 0+$, where $G$ is some positive and bounded periodic function. If all the self-similar sets forming the boundary are nonlattice, then the function $G$ would be constant, while otherwise one would expect $G$ to have nontrivial oscillations.
From \eqref{eq-LP1-general} we would obtain, for any $z>0$,
\begin{equation}\label{eq-LP2}
\mu(\del \Omega,zr )=G(zr )z^{n-d}r ^{n-d}+o(r^{n-d}).
\end{equation}
Since $G$ is positive and bounded, \eqref{eq-LP2} implies that 
\begin{equation}\label{eq-LP2-bound}
|\mu(\del \Omega,zr )-z^{n-d}\mu(\del\Omega, r)|\leq G_{\max} z^{n-d} r^{n-d}+o(r^{n-d}),
\end{equation}
where $G_{\max}=\sup_{r,s>0} |G(r)-G(s)|$. This should be compared with  \eqref{EqExpmu1} in the Minkowski measurable case. Moreover, since $G$ is multiplicative periodic with some period $h$, one even has \eqref{EqExpmu1} along the period, i.e., for  $z=h^k$, $k\in\N$.



Numerically we expect that $\sup_{r,s >0}|G(r )-G(s)|$
should be rather small and provide a small but noticeable influence on the heat kernel asymptotics; see \cite{StrVicsek,StrSig,ADT} and related articles.

\section{Asymptotics of the heat content for a two media problem using the volume of the interior Minkowski sausage}\label{SubsecBardos}

In this section, we examine the transmission problem between a hot, compact medium and an initially cold complement, where the interface may resist heat propagation. We explore the relationship between the short-time asymptotics of the heat content and the volume of the Minkowski sausage, thereby investigating the de Gennes hypothesis.
Our results are a refinement of those in~\cite{BARDOS-2016}, which we partially recall here.
We begin by introducing the model and briefly discussing its well-posedness (for further details, see~\cite{BARDOS-2016,claret:hal-04316274}).

In~\cite{BARDOS-2016} a bounded domain $\Omega \subset \R^n$ was considered with a boundary
$\del\Omega$ that splits $\R^n$ into a ``hot'' and a ``cold'' medium,
$\Omega_+ = \Omega$ and $\Omega_- = \R^n \setminus\overline{\Omega}$,
characterized by (distinct) heat diffusion coefficients $D_+$ and
$D_-$, see~\cite[Fig.~1]{BARDOS-2016}. On the boundary $\del\Omega$ a function $0\le\Lambda(x)\le \infty$ is
defined, which describes the
resistivity to heat exchange through the boundary.
%
The heat content propagation is associated with
the following transmission problem, which we present firstly in its strong (physical) form:
\begin{align}
  \del_t u_\pm- D_{\pm} \Delta u_\pm &=0 \quad x\in \Omega_\pm, \; t>0,\label{prb1}\\
  u_+|_{t=0}&= 1, \, x\in \Omega_+, \quad u_-|_{t=0}=0,  \, x\in \Omega_-\label{prb1i}\\
  D_- \frac{\del u_-}{\del n}|_{\del \Omega}&=\Lambda(x)(u_--u_+)|_{\del \Omega}, \; t>0,\label{prb1c} \\
  D_+ \frac{\del u_+}{\del n}|_{\del \Omega}&=D_- \frac{\del u_-}{\del n}|_{\del \Omega}, \; t>0,\label{prb1end}
\end{align}
where $\partial/\partial n$ is the normal derivative directed outside
the domain $\Omega$, well-defined for all $x\in \del \Omega$ for $C^1$-boundary.
Mathematically, especially for the case of non-Lipschitz interfaces $\del \Omega$, the problem~\eqref{prb1}--\eqref{prb1end} is understood in the variational (weak) sense as formulated in~\eqref{varpr} below, see~\cite{BARDOS-2016,claret:hal-04316274}. 

The boundary between the two media can have some resistance to
heat exchange, described by the function $\Lambda(x)\geq 0$ ($x\in
\del \Omega$) that may account for partial thermal isolation. Typically, there are three different types of boundary conditions considered corresponding to different values of $\Lambda$:
\begin{enumerate}
\item
If $\Lambda(x)=\Lambda$ for some strictly positive constant $0<\Lambda <\infty$ and all $x\in \del \Omega$, then $u$ is discontinuous at $\del \Omega$ and we have:
\begin{equation*}
\left(\Lambda u_--D_- \frac{\del u_-}{\del n}\right)|_{\del
\Omega}=\Lambda u_+|_{\del \Omega}, \quad D_+
\frac{\del u_+}{\del n}|_{\del \Omega}=D_- \frac{\del u_-}{\del
n}|_{\del \Omega}.
\end{equation*}

\item
If $\Lambda=+\infty$ for all $x\in \del \Omega$, then $u$ is continuous on
$\del \Omega$ due to the transmission condition and in this case
\begin{equation*}
u_+|_{\del \Omega}=u_-|_{\del \Omega},\quad D_+ \frac{\del u_+}{\del n}|_{\del
\Omega}=D_- \frac{\del u_-}{\del n}|_{\del \Omega}.
\end{equation*}

\item
If $\Lambda=0$ for all $x\in \del \Omega$, then we have the Neumann
boundary condition
\begin{equation*}
\frac{\del u_-}{\del n}|_{\del \Omega}=\frac{\del u_+}{\del n}|_{\del \Omega}=0
\end{equation*}
that models the complete thermal isolation of $\Omega$ and
implies the trivial solution given by $u_-(x,t) = 0$ and $u_+(x,t) =
1$ for all times $t\ge 0$.
\end{enumerate}
In problem~(\ref{prb1})--(\ref{prb1end}) it is assumed that the physical properties of the two media $\Omega_+$ and $\Omega_-$ are
different: $D_+ \ne D_-$. As mentioned in~\cite{BARDOS-2016}, this implies the
discontinuity of the metric on $\del \Omega$. 
The case of a continuous Riemannian
metric ($g_-|_{\del \Omega}=g_+|_{\del \Omega}$) on a smooth compact
$n$-dimensional Riemannian manifold with a smooth boundary $\del
\Omega$ was considered in~\cite{GILKEY-2003}. The case of continuous
transmission boundary conditions for the expansion of the heat kernel
on the diagonal was treated in~\cite{PIROZHENKO-2005}. In~\cite{VASSILEVICH-2003}, there is a survey of results on the asymptotic
expansion of the heat kernel for different boundary conditions.
In~\cite{BARDOS-2016} it was established that the problem~(\ref{prb1})--(\ref{prb1end}) is weakly well-posed whenever $\del \Omega$ is a $d$-set. However, thanks to the trace theorem~\cite[Theorem 5.1]{HINZ-2021-1}, it is possible to extend the well-posedness to upper $d$-regular boundaries by adding the condition that $\Omega$ and its complement $\Omega_-$ are extension domains (or, for short, that $\Omega$ is a two-sided extension domain), see~\cite[Theorem~2.7]{claret:hal-04316274}.
Recall that a set $\Omega\subset\R^n$ is an \textit{($H^1$-)~extension domain}~\cite{HAJLASZ-2008}, if and only if there exists a bounded linear extension operator $E:H^1(\Omega)\to H^1(\R^n)$, $\left.Eu\right|_\Omega=u$ such that
for all $u\in H^1(\Omega)$, $\|Eu\|_{H^1(\R^n)}\le C\|u\|_{H^1(\Omega)}$,
with $C>0$ depending only on $n$ and $\Omega$. Moreover, upper $d$-regularity (for some fixed $d\in [n-1,n[$) means the existence of a positive Borel measure
 $\nu$ with support equal to $\del \Omega$ satisfying the following condition: there exists $c_d>0$ such that, for all $x\in\del \Omega$ and all $r\in]0,1]$, 
\begin{equation}\label{Eqmu}
 \quad\nu(B_r(x))\le c_dr^d.
\end{equation}
Here $B_r(x)$ denotes the open ball in $\R^n$ with center $x$ and radius $r$. Such two-sided extension domains with a connected boundary $\del \Omega=\operatorname{supp} \nu$ and endowed with an upper $d$-regular measure $\nu$,
are shortly called \textit{admissible domains}.
In many cases $\nu$ can be chosen to be the $d$-dimensional Hausdorff measure, or, specifically, in case of a $1$-dimensional boundary, the boundary length. 
Condition~\eqref{Eqmu} implies that the Hausdorff dimension of the boundary is (locally) at least $d$. However, the condition is flexible enough to allow boundaries with locally varying dimension. For example, some part of the boundary may have dimension $d_1$, another one dimension $d_2$, with $d\le d_1<d_2< n$ and $n-1\le d<n$. Some parts may be Lipschitz, only if $d=n-1$. The weak well-posedness of problem ~\eqref{prb1}--\eqref{prb1end} for admissible domains $\Omega$ is discussed in detail in~\cite{claret:hal-04316274}. Here we just present the results.

For the weak well-posedness on admissible domains we introduce the space
\begin{equation*}
V:=\left\{f\in L^2(\R^n)|\; f_+=f|_{\Omega_+}\in H^1(\Omega_+),\;\hbox{and } f_-=f|_{\Omega_-}\in
H^1(\Omega_-)\right\}
\end{equation*}
of functions $f=f_+\mathds{1}_{\Omega_+}+f_-\mathds{1}_{\Omega_-}$
defined on $\Omega_+\cup\Omega_-$ such that their restrictions
$f_+=f|_{\Omega_+}$ and $f_-=f|_{\Omega_-}$ belong to $H^1$. We equip
$V$ with the norm
\begin{equation*}
 \|u\|_V^2=D_+\int_{\Omega_+} |\nabla u_+|^2\dx+D_-\int_{\Omega_-} |\nabla u_-|^2 \dx+\int_{\Omega_+\cup\Omega_-} |u|^2 \dx.
\end{equation*}
Note that $V$ is a Hilbert space, $V\subset L^2(\R^n)$, and $V$
is dense in $L^2(\R^n)$. In addition, $V\subset L^2(\R^n) \subset
V'$, where $V'$ is the dual space to $V$.
For further simplification of technical details, we choose to restrict here to the case that the boundary resistivity parameter $\Lambda$ has a constant value on all of $\del \Omega$: a constant $\Lambda\in]0, \infty[$, or $+\infty$. We refer to~\cite{BARDOS-2016,claret:hal-04316274} for a mixed case and the case when $\Lambda$ is a continuous function of $x$.
Thus, for $\Lambda\in]0, \infty[$ on $\del \Omega$, using in the usual way the continuous and coercive bi-linear form $\alpha_\Lambda$ on $V\times V$ with the notation $\nu$ for the measure on $\del \Omega$, defined by
\begin{multline}\label{VarForm1}
 a_\Lambda(u,v)=D_+\int_{\Omega_+} \nabla u_+ \nabla v_+dx+D_-\int_{\Omega_-} \nabla u_- \nabla v_-dx\\+
 \int_{\del \Omega}\Lambda\operatorname{Tr}(u_+-u_-)\operatorname{Tr}(v_+-v_-) d \nu,
\end{multline}
we obtain the weak well-posedness of problem~(\ref{prb1})--(\ref{prb1end})
in the following sense:
there exists a unique solution $u\in
C(\R^+_t,L^2(\R^n))\cap L^2(\R^+_t,V)$ of the variational problem
\begin{eqnarray}
 &&\forall v\in V\quad \frac{d}{dt}\langle u,v \rangle_{L^2(\R^n)} + a_\Lambda(u,v)=0,\quad u(x,0)=u_0\in L^2(\R^n).\label{varpr}
\end{eqnarray}
The trace operator $\operatorname{Tr}$ on the boundary of $u_\pm$ is understood as interior/exterior trace on $\del \Omega$~\cite{CLARET-2023,claret:hal-04316274} (i.e., equivalence classes of pointwise restrictions of quasi-continuous representatives $\tilde{u}_\pm$ on $\del \Omega$ of $u_\pm \in H^1 (\Omega_\pm)$, extended to $Eu_\pm\in H^1(\R^n)$).
If $\Lambda=+\infty$ on $\del \Omega$, then the boundary term in~(\ref{VarForm1}) disappears and hence $V=H^1(\R^n)$. Taking in \eqref{varpr} the bilinear form
\begin{equation}
	\label{VarFormInf}
 a_\infty(u,v)=D_+\int_{\Omega_+} \nabla u_+ \nabla v_+dx+D_-\int_{\Omega_-} \nabla u_- \nabla v_-dx,
\end{equation}
we obtain the same type of well-posedness result for the case $\Lambda=\infty$.
For the continuous dependence of $u$ on $\Lambda$ see \cite[Theorem~2.2]{BARDOS-2016}.

Once a unique solution $u_{\pm}$ of the
problem~(\ref{prb1})--(\ref{prb1end}) is established, the
heat content at time $t>0$ is defined by
\begin{equation}\label{Nt}
 N(t):=\int_{\R^n\setminus \Omega} u_-(x,t)\dx =\lambda_n(\Omega)-\int_\Omega u_+(x,t)\dx,
\end{equation}
where the last equality is due to the fact that, by definition, the Green's function $G(x,y,t)$ of the model satisfies $\int_\Omega G(x,y,t)\dy=u_+(x,t)$ as well as $\int_{\R^n} G(x,y,t)\dy =1$, so that
\begin{align*}
N(t)&=\int_{\R^n\setminus\Omega} u_-(x,t)\dx
=\int_{\R^n\setminus\Omega}\int_\Omega G(x,y,t)\dy \dx\\ &=\int_{\R^n}\int_\Omega G(x,y,t)\dy \dx-\int_\Omega \int_\Omega G(x,y,t)\dy \dx
= \lambda_n(\Omega)-\int_\Omega u_+(x,t) \dx.
\end{align*}
%

We are interested in asymptotic expansions of the heat content $N(t)$ as $t\to 0+$. Let us denote by
$\Omega_r$ and $\Omega_{-r}$ the open $r$-neighborhoods of
$\del \Omega$ in $\Omega$ and in $\R^n\setminus\Omega$, respectively.
According to~\cite[Lemma 3.1]{BARDOS-2016},
there exists some $\delta_0>0$ (a constant independent of time) and some $\eta(t)>0$ (more precisely, $\eta(t)>\sqrt{4D_+ t}$, $\eta(t)=O\left(\sqrt{t}t^{-\delta}\right)$ for some $\delta\in ]0,\frac{1}{2}[$), such that
 the heat content
$N(t)$ satisfies
\begin{equation}\label{ErrOm}
 N(t) 
 =\int_{\Omega_{\eta(t)}}\left(1-\int_{\Omega_{\eta(t)}} G(x,y,t)\dy\right)\dx
+ O\left(e^{-\frac{1}{t^{\delta_0}}}\right), 
\end{equation}
as $t\to 0+$, where $G(x,y,t)$ is the Green's function of our problem~(\ref{prb1})--(\ref{prb1end}). Note that formula~\eqref{ErrOm} holds for any bounded domain $\Omega$ in $\R^n$ for which~\eqref{varpr} is well-defined. %
In other words, for all bounded admissible domains $\Omega$, we have
\begin{equation}\label{EqNtNHB}
	N(t)-\mu(\del \Omega,\eta(t))+\int_{\Omega_{\eta(t)}}\int_{\Omega_{\eta(t)}} G(x,y,t)\dy\dx=O\left(e^{-\frac{1}{t^{\delta_0}}}\right).
\end{equation}
Clearly, the approximation of $N(t)$ depends on the volume of the Minkowski sausage $\mu(\del \Omega,\eta(t))=\int_{\Omega_{\eta(t)}}\dx$. In order to obtain a more explicit formula, we need to approximate the term $\int_{\Omega_{\eta(t)}}\int_{\Omega_{\eta(t)}} G(x,y,t)\dy\dx$. 

\subsection{The regular case of $C^3$ boundary}\label{SubsReg}
For the reader's convenience, this subsection revisits a result initially established in \cite[Theorem7.1]{BARDOS-2016} regarding the asymptotic expansion of the heat content $N(t)$ up to the third-order term for domains $\Omega$ with a regular boundary $\partial \Omega \in C^3$. It presents a sketch of the proof and illustrates the quality of the result with some numerical simulation, see Figure~\ref{Fig2asf}.
This section provides the necessary foundation for the new results presented in Section~\ref{SSpartic}.

\begin{theorem}~\cite[Theorem~7.1]{BARDOS-2016}\label{ThFinalr}
Let $\Omega$ be a bounded domain of $\R^n$ with a connected, $C^3$-regular boundary
$\del \Omega$. Then for $\Lambda=\infty$ we have
\begin{equation}\label{NtregLI}
N(t)=2 \frac{1-e^{-4}}{\sqrt{\pi}} ~ \frac{\sqrt{D_+ D_-}}{\sqrt{D_+}+\sqrt{D_-}} ~ \sH^{n-1}(\del \Omega)~ \sqrt{t} +O(t^{\frac{3}{2}}).
\end{equation}
In the case of $0<\Lambda<\infty$, we have
\begin{align}
 N(t)&=4 C_0t \int_{\del \Omega}\Lambda(\sigma) \mathcal{H}^{n-1}(d \sigma) -
\tfrac{2}{3}C_1t^\frac{3}{2}\biggl[2 \biggl(\tfrac{1}{\sqrt{D_+}}+\tfrac{1}{\sqrt{D_-}}\biggr) \int_{\del \Omega}\Lambda^2(\sigma)\mathcal{H}^{n-1}(d \sigma) \nonumber\\
 & \quad - \sqrt{D_+}(n-1)\int_{\del \Omega}\Lambda(\sigma) H(\sigma)\mathcal{H}^{n-1}(d \sigma)\biggr]+O(t^2),\label{NtregLF}
\end{align}
where $H$ is the mean curvature, and
\begin{align}
\label{eq:C0}
 C_0&=1+\frac{3}{2}\erf(1)-\frac{9}{4}\erf(2)+\frac{1}{\sqrt{\pi}}\left(\frac{1}{e}-\frac{1}{e^4} \right) \approx 0.2218,\\
 C_1&=\frac{1}{\sqrt{\pi}} -6+\frac{5e^{-4}-4e^{-1}}{\sqrt{\pi}}-5\erf(1)+11\erf(2) \approx 0.5207.
\end{align}
Here $\erf(x):=\frac 2{\sqrt{\pi}}\int_0^{x} \exp(-t^2)dt$ denotes the Gauss error function.
\end{theorem}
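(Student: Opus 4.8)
\noindent\emph{Proof idea.}
The starting point is formula~\eqref{ErrOm} (equivalently~\eqref{EqNtNHB}): up to an error $O(e^{-1/t^{\delta_0}})$, the heat content equals the boundary-layer integral $\int_{\Omega_{\eta(t)}}\bigl(1-\int_{\Omega_{\eta(t)}}G(x,y,t)\,\dy\bigr)\,\dx$, and the Gaussian bounds on the transmission Green's function $G$ concentrate all that matters in a collar of $\del\Omega$ of width comparable to $\eta(t)$. The plan is to (i) introduce Fermi coordinates on this collar, using $\del\Omega\in C^3$; (ii) replace $G$ by the kernel of a frozen-coefficient one-dimensional transmission operator, keeping the first curvature correction; (iii) solve that one-dimensional problem explicitly; and (iv) integrate over $\del\Omega$ and collect powers of $\sqrt t$.

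\smallskip\noindent\emph{Steps (i)--(ii).}
Since $\del\Omega\in C^3$, the map $(\sigma,s)\mapsto\sigma+s\,n(\sigma)$ ($\sigma\in\del\Omega$, $|s|<\eta_0$, $n$ the outer unit normal, $s>0$ pointing into $\Omega_-$) is, for some $\eta_0>0$, a $C^2$-diffeomorphism onto a two-sided collar, in which the volume element is $J(s,\sigma)\,\dd s\,\sH^{n-1}(\dd\sigma)$ with $J(s,\sigma)=1+(n-1)H(\sigma)s+O(s^2)$, and $D_\pm\Delta=D_\pm\bigl(\partial_{ss}+(n-1)H(\sigma)\partial_s+\Delta_\sigma+O(s)\bigr)$, $\Delta_\sigma$ being the Laplacian on the level surface. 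Freezing $\sigma$, integrating out the $n-1$ tangential variables (net mass $1$; the tangential Laplacian and the mixed terms contribute only to the $O(t^{3/2})$ remainder, while the drift $(n-1)H\partial_s$ contributes at order $t$), and using the off-diagonal decay of $G$ to discard the complement of the collar, one is left --- for each frozen $\sigma$ --- with the one-dimensional transmission problem
\begin{gather}
\partial_t v_\pm=D_\pm\partial_{ss}v_\pm,\qquad v_+|_{t=0}=\mathbf{1}_{\{s<0\}},\qquad v_-|_{t=0}=0,\notag\\
D_-\partial_s v_-|_{0^+}=\Lambda\bigl(v_-|_0-v_+|_0\bigr),\qquad D_+\partial_s v_+|_{0^-}=D_-\partial_s v_-|_{0^+}.\label{eq:1Dmodel}
\end{gather}
With $w(s,t):=1-v_+(s,t)$ for $s<0$, the boundary-layer integral becomes, to the orders that matter,
\begin{equation*}
\int_{\del\Omega}\Bigl(\int_{-\eta(t)}^{0}w(s,t)\,\dd s\Bigr)\sH^{n-1}(\dd\sigma)
+(n-1)\int_{\del\Omega}H(\sigma)\Bigl(\int_{-\eta(t)}^{0}s\,w(s,t)\,\dd s\Bigr)\sH^{n-1}(\dd\sigma)+\dots,
\end{equation*}
so everything is reduced to one-dimensional integrals of $w$ and of its first moment; the $\Omega_-$-side enters only through the interface conditions in~\eqref{eq:1Dmodel} (and, in the non-constant case, $w$ carries a $\sigma$-dependence through $\Lambda(\sigma)$).

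\smallskip\noindent\emph{Step (iii).}
For $\Lambda=\infty$ the interface conditions reduce to $v_+|_0=v_-|_0$ and $D_+\partial_s v_+|_0=D_-\partial_s v_-|_0$; the problem is scale invariant, its similarity solution is a pair of error functions, and flux matching forces the (time-independent) interface value $v(0,t)=\sqrt{D_+}/(\sqrt{D_+}+\sqrt{D_-})$, so that $w(s,t)=\tfrac{\sqrt{D_-}}{\sqrt{D_+}+\sqrt{D_-}}\bigl(1-\erf(-s/(2\sqrt{D_+t}))\bigr)$ and $\int_{-\infty}^{0}w(s,t)\,\dd s=\tfrac{2}{\sqrt\pi}\,\tfrac{\sqrt{D_+D_-}}{\sqrt{D_+}+\sqrt{D_-}}\,\sqrt t$. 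Performing instead the truncated integrations dictated by~\eqref{ErrOm} and~\cite[Lemma~3.1]{BARDOS-2016} gives the leading term of~\eqref{NtregLI} with its prefactor $1-e^{-4}$, and the order-$t$ coefficient is found to vanish (in contrast with the one-medium Dirichlet problem, where the boundary temperature is pinned), so the remainder is $O(t^{3/2})$. For $0<\Lambda<\infty$ the interface condition is not scale invariant, which is exactly why the leading order drops to $t$: the normal flux through $\del\Omega$ is $O(\Lambda)$ while the temperature drop is $1+O(\sqrt t)$. Here one solves~\eqref{eq:1Dmodel} by the Laplace transform in $t$; the transformed interface flux satisfies a scalar relation whose large-$p$ (small-$t$) expansion proceeds in powers of $p^{-1/2}$, so $w$ and its first moment are power series in $\sqrt t$ beginning at order $t$ with a coefficient proportional to $\Lambda$. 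Integrating over $\del\Omega$ and inserting $J=1+(n-1)Hs+\dots$ then gives~\eqref{NtregLF}: the $\int_{\del\Omega}\Lambda\,\sH^{n-1}(\dd\sigma)$ term at order $t$, the $\int_{\del\Omega}\Lambda^2\,\sH^{n-1}(\dd\sigma)$ term at order $t^{3/2}$, and the curvature term $\int_{\del\Omega}\Lambda H\,\sH^{n-1}(\dd\sigma)$ at order $t^{3/2}$, the latter coming from the $(n-1)Hs$ factor together with the drift $(n-1)H\partial_s$.

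\smallskip\noindent\emph{Constants and main obstacle.}
The numbers $C_0$, $C_1$ and the factor $1-e^{-4}$ are the explicit values of the resulting one-dimensional integrals --- combinations of $\erf(1)$, $\erf(2)$, $e^{-1}$, $e^{-4}$ and $\pi^{-1/2}$ --- the arguments $1$ and $2$ being fixed by the cut-offs (at distances comparable to $\sqrt{D_\pm t}$ and $\sqrt{4D_\pm t}$) built into the Green's-function estimate~\cite[Lemma~3.1]{BARDOS-2016} underlying~\eqref{ErrOm}. I expect the genuinely delicate point to be Step~(ii): replacing $G$ by the frozen one-dimensional model kernel (times a tangential Gaussian) with an error small enough to survive to order $t^{3/2}$ requires sharp off-diagonal and near-interface Gaussian estimates for $G$, the ambient metric being discontinuous across $\del\Omega$ because $D_+\neq D_-$ (compare the heat-kernel analyses in~\cite{GILKEY-2003,PIROZHENKO-2005,VASSILEVICH-2003}). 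A secondary difficulty, present only for $0<\Lambda<\infty$, is extracting three terms of the inverse Laplace transform of a function with a $\sqrt p$ branch point.
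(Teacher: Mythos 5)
Your proposal is correct in outline and follows essentially the same route as the paper's sketch and the detailed proof in \cite{BARDOS-2016}: localization to an $\eta(t)$-collar via the ``not feeling the boundary'' estimate \eqref{ErrOm}, local (Fermi) coordinates with Jacobian $|J(s,\sigma)|=\prod_j(1-sk_j)$, reduction to a frozen-coefficient one-dimensional transmission problem, and integration of its explicit half-space kernel over $\del\Omega$. Your Laplace-transform solution of the one-dimensional model is merely another derivation of the kernel components $h_+$, $f_+$ in \eqref{h}--\eqref{fp}, and you correctly locate both the origin of the constants $C_0$, $C_1$, $1-e^{-4}$ (the finite collar cut-off) and the genuinely delicate step (controlling the parametrix remainder to order $t^{3/2}$ despite the discontinuous diffusivity).
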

Fig.~\ref{Fig2asf} shows the efficiency of the approximation~\eqref{NtregLF} for $\Omega\subset\R^2$ being a ball.
\begin{figure}[!ht]
\begin{center}
 \psfrag{t}{{\small $t$}}
\psfrag{N}{{\small $N(t)$}}
\psfrag{data1}{{\small $N(t)$}}
\psfrag{data2}{{\small $1$ term}}
\psfrag{data3}{{\small $2$ terms}}
 \includegraphics[width=\linewidth]{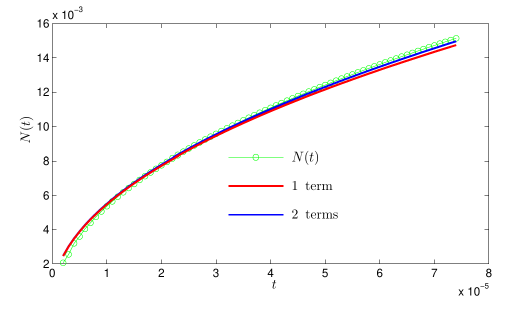} %
 \end{center}
\caption{\label{Fig2asf}
 Comparison between the asymptotic formula~(\ref{NFint2Ca}) with one term of the order $t$ (solid red
 line, $t^{-\delta}\approx 1$) and formula~\eqref{NtregLF} with two terms (blue line) and a FreeFem++ numerical solution of problem~(\ref{prb1})--(\ref{prb1end}) (green line with circles) for the circle boundary with $D_+=1/100$, $D_-=1$, and $\Lambda=17$. }
 \end{figure}
The depicted numerical solution was obtained in FreeFem++
by a finite element method with the implicit $\theta$-schema, also
known as Crank-Nicolson schema, for the time discretization with
$\theta=\frac{1}{2}$ and $\Delta t=10^{-6}$. The domain $\Omega$ is equal to a ball confined in another ball $B$ with the same center and two times bigger radius. The Neumann boundary condition was imposed on
the boundary of the external ball. According to the principle `not feeling
the boundary'~\cite{FLECKINGER-1995}, the
heat content propagation in $\R^2$ with a prescribed boundary $\del
\Omega$ can be very accurately approximated at short times by
the heat content propagation computed only in $B$. The accuracy of this
approximation can also be checked by changing the diameter of the
ball. In the case of
the discontinuous solution on the boundary (when $0<\Lambda<\infty$)
we apply the domain decomposition method and match the boundary values
of the respective solutions on $\del \Omega$ by a Picard fixed point
method. Therefore, we consider the numerical solution of heat
propagation for small times as a reference, to which asymptotic
formulas are compared. In particular, deviations between the
numerical solution and the asymptotic formulas observed at longer
times illustrate the range of validity of the short-time expansion.

The proof of the asymptotic formulas in Theorem~\ref{ThFinalr} is rather technical. As a first step, one needs to calculate explicitly Green's function of the constant coefficient problem in the half
space.
A key tool is formula~\eqref{ErrOm}, which is a variant of the principle `not feeling the
boundary'~\cite{FLECKINGER-1995}. Due to the continuity of $u$ in the parameter $\Lambda$, this
allows, to establish for a constant $\delta_0>0$ and $\eta(t)=O(\sqrt{t}t^{-\delta})$ (in~\cite{BARDOS-2016} $\eta(t)$ was taken as $\eps=O(\sqrt{t})$) that
\begin{equation*}
N(t)=\int_{\Omega} (1-u(x,t))\dx=\int_{\Omega_{\eta(t)}}(1-u(x,t))\dx+O(e^{-\frac{1}{t^{\delta_0}}})
\end{equation*}
can be found as a sum of two heat contents according to the finiteness or not of $\Lambda$ (including the value zero) in the boundary conditions ($i.e.$ for $\del \Omega=\Gamma_\infty\cup \Gamma_\Lambda\cup \Gamma_0$):
\begin{equation*}
N(t)=\int_{\Omega_{\eta(t)}^{\Gamma_\Lambda}}(1-u(x,t))\dx+ \int_{\Omega_{\eta(t)}^{\Gamma_\infty}}(1-u(x,t))\dx+O(e^{-\frac{1}{t^{\delta_0}}}).
\end{equation*}
Considering a regular $\del
\Omega$ (at least in $C^3$) and using the localization properties of the heat propagation, we rewrite the formula
for $N(t)$ in terms of local coordinates.

 In order to explain this in more detail, we recall from~\cite{BARDOS-2016} the approximating framework.
Assume $\del \Omega\in C^\infty$ (or at least $C^3$). 
Then, due to the assumed compactness, the regularity implies that $\del \Omega$ has \emph{positive reach}, see e.g.~\cite[p.56]{RZ19} (in fact, $C^2$ suffices for this). Recall that a closed set $A$ has \emph{reach} $r>0$, if each point $x$ with $\dist(x,A)<r$ has a unique nearest point in $A$. We denote the reach of $\del \Omega$ by $\eta_0$ and assume without further mention that all radii $\eta(t)$ considered below satisfy $0<\eta(t)<\eta_0$. We consider a disjoint
 decomposition $\Omega_{\eta(t)}\cup\del
\Omega\cup\Omega_{-\eta(t)}=\bigcup_{m=1}^M B_{m,\eta(t)}$ into a finite number of sets $B_{m,\eta(t)}$ for which it is possible
to introduce local coordinates. %
In addition, we assume that for
all $m=1,\ldots,M$ there exists $\sigma_m\in \del \Omega \cap
B_{m,\eta(t)}$ such that $B_{m,\eta(t)}$ is included in the Euclidean ball with centre $\sigma_m$ and radius $2\eta(t)$. %
Due to \cite[Proposition~3.1]{BARDOS-2016}, the last assumption ensures that it is possible to consider problem~(\ref{prb1})--(\ref{prb1end}) locally, only on $B_{m,\eta(t)}$, up to an exponentially small error.
Denoting $\Omega_{m,+\eta(t)}:=B_{m,\eta(t)}\cap\Omega_{\eta(t)}$ and
 $\Omega_{m,-\eta(t)}:=B_{m,\eta(t)}\cap\Omega_{-\eta(t)}$, respectively, equation (\ref{ErrOm}) becomes
\begin{align}
 N(t)&=\sum_{m=1}^M \int_{\Omega_{m,+\eta(t)}} (1-u(x,t))\dx+O(e^{-\frac{1}{t^{\delta_0}}}). \label{nloc}
\end{align}

For each $m$, we perform the change of the space variables
$(x_1,\ldots,x_n)\in B_{m,\eta(t)}$ to the local
coordinates $(\theta_1,\ldots,\theta_{n-1},s)$ by the formula
\begin{equation*}
 x=\hat{x}(\theta_1,\ldots,\theta_{n-1})-sn(\theta_1,\ldots,\theta_{n-1}), 
 \left\{\begin{array}{l}
 0<s<\eta(t)\;\hbox{for } x\in B_{m,\eta(t)}\cap\Omega_{\eta(t)},\\
 -\eta(t)<s<0 \; \hbox{for } x\in B_{m,\eta(t)}\cap\Omega_{-\eta(t)},
 \end{array}\right.
\end{equation*}
where $\hat{x}(\theta_1,\ldots,\theta_{n-1})\in \del \Omega$ and $x$,
$\hat{x}$ and $n$ are the vectors in $\R^n$ such that
\begin{equation*}
\left\{\frac{\del\hat{x} }{\del\theta_1 }, \ldots,\frac{\del\hat{x} }{\del\theta_{n-1} },n\right\}
\end{equation*}
is an orthonormal basis in $\R^n$.
 In each of two regions,
$\Omega_{m,+\eta(t)}$ and $\Omega_{m,-\eta(t)}$, the change of variables
$(x_1,\ldots,x_n)\mapsto (\theta_1,\ldots,\theta_{n-1},s)$ is a local
$C^1$-diffeomorphism. We notice that
 $\del \Omega$ is described by $s=0$.
Denoting $\theta=(\theta_1,\ldots,\theta_{n-1})$, the integration
domain $\Omega_{m,+\eta(t)}$ in~(\ref{nloc}) becomes a parallelepiped
\begin{equation*}
\hat{\Omega}_{m,+\eta(t)}=\{0<s<\eta(t), \quad \theta \hbox{ s.t. } \hat{x}(\theta)\in \del \Omega\cap\overline{\Omega}_{m,+\eta(t)}\}.
\end{equation*}

Let us fix $m$.
Thanks to the local change of variables, as explained in~\cite{BARDOS-2016},
using twice the integration by parts and the notations
\begin{equation}\label{Jacobian}
 |J(s,\theta)|=\prod_{j=1}^{n-1}(1-sk_{j})
\end{equation}
for the Jacobian and $k_{j}=k_{j}(\theta_1,\ldots,\theta_{n-1})$ for the
principal curvatures of $\del \Omega$ curving away the outward normal
$n$ to $\del \Omega$ like in the case of the sphere, we find that for
all test functions $\phi=(\phi_+,\phi_-)\in V|_{B_{m,\eta(t)}}$
\begin{align*}
 &\int_{\hat{\Omega}_{m,+\eta(t)}\cup \hat{\Omega}_{m,-\eta(t)}}\del_t u ~|J(s,\theta)|~\phi ~\ds d \theta_1\cdots d \theta_{n-1}
\\
&-\int_{\hat{\Omega}_{m,+\eta(t)}} \hspace*{-1mm} \left(\frac{\del }{\del s}\left(D_+|J(s,\theta)|\frac{\del u_+ }{\del s} \right)
+\sum_{j=1}^{n-1}\frac{\del }{\del \theta_j}\left(\tfrac{D_+|J(s,\theta)|}{(1-sk_j)^2}
\frac{\del u_+}{\del \theta_j}\right)\right)\phi_+ \ds d \theta_1\cdots d \theta_{n-1}\\
&-\int_{\hat{\Omega}_{m,-\eta(t)}} \hspace*{-1mm} \left(\frac{\del }{\del s}\left(D_-|J(s,\theta)|\frac{\del u_- }{\del s} \right)+
\sum_{j=1}^{n-1}\frac{\del }{\del \theta_j}\left(\tfrac{D_-|J(s,\theta)|}{(1-sk_j)^2}
\frac{\del u_-}{\del \theta_j}\right)\right)\phi_- \ds d \theta_1\cdots d \theta_{n-1}\\
&+ \int_{s=0}\Lambda(u_+-u_-)(\phi_+-\phi_-) d\theta=0.
\end{align*}
The regularity of the boundary ensures that the principal curvatures
$k_j(\theta)$ are at least in $C^1(\del \Omega\cap B_{m,\eta(t)}).$
Therefore, problem~(\ref{prb1})--(\ref{prb1end}) locally becomes
\begin{align}
 &\frac{\del}{\del t}u_+-D_+\left(\frac{\del^2}{\del s^2}+\sum_{j=1}^{n-1}\frac{\del^2}{\del \theta^2_j}\right) u_+=
 D_+\sum_{j=1}^{n-1} \frac{sk_j(\theta)}{1-sk_j(\theta)}
\left( 1+\frac{1}{1-sk_j(\theta)}\right)\frac{\del^2u_+}{\del \theta^2}\nonumber\\
 & -D_+\left(\sum_{j=1}^{n-1}k_j(\theta)+s\sum_{j=1}^{n-1} \frac{k_j^2(\theta)}{1-sk_j(\theta)} \right)
\frac{\del u_+}{\del s}\nonumber\\
 &+\frac{D_+}{|J(s,\theta)|}\sum_{j=1}^{n-1}\frac{\del}{\del \theta_j}
\left(\frac{|J(s,\theta)|}{(1-sk_j(\theta))^2} \right) \frac{\del u_+}{\del \theta_j},\;(s,\theta)\in \overline{\hat{\Omega}}_{m,+\eta},\label{eq47}
\end{align}
\begin{align}
 \frac{\del}{\del t}u_--D_-\left(\frac{\del^2}{\del s^2}+\sum_{j=1}^{n-1}\frac{\del^2}{\del \theta^2_j}\right) u_-&=
 D_-\sum_{j=1}^{n-1} \frac{sk_j(\theta)}{1-sk_j(\theta)}\left( 1+\frac{1}{1-sk_j(\theta)}\right)
\frac{\del^2u_-}{\del \theta^2} \nonumber\\
&-D_-\left(\sum_{j=1}^{n-1}k_j(\theta)+s\sum_{j=1}^{n-1} \frac{k_j^2(\theta)}{1-sk_j(\theta)} \right)
\frac{\del u_-}{\del s}\\
&+\frac{D_-}{|J(s,\theta)|}\sum_{j=1}^{n-1}\frac{\del}{\del \theta_j}\left(\frac{
|J(s,\theta)|}{(1-sk_j(\theta))^2} \right) \frac{\del u_-}{\del \theta_j},\nonumber\\
&(s,\theta)\in \overline{\hat{\Omega}}_{m,-\eta(t)},\label{eq48}
\end{align}
\begin{align}
&u_+|_{t=0}=1,\; u_-|_{t=0}=0,\label{ini}\\
 & D_-\frac{\del u_-}{\del s}|_{s=-0}=\Lambda(\theta) (u_--u_+)|_{s=0}, \label{boundar}\\
& D_+ \frac{\del u_+}{\del s}|_{s=+0}=D_-\frac{\del u_-}{\del s}|_{s=-0}.\label{endsys}
\end{align}
We emphasize that problem~(\ref{eq47})--(\ref{endsys}) should be
considered as the trace of Eqs.~(\ref{prb1})--(\ref{prb1end}) on
$B_{m,\eta(t)}$ in the sense of \cite[Proposition~3.1]{BARDOS-2016} with the mollifier $\phi_{\theta_m}\equiv 1$ on $B_{m,\eta(t)}.$

Therefore, we can rewrite~(\ref{nloc}) in new coordinates and use the
parallelepiped property of $\Omega_{m,+\eta(t)}$ in the space of
variables $(s,\theta,t)$:
\begin{eqnarray*}
N(t)&=&\sum_{m=1}^M\int_{\hat{\Omega}_{m,+\eta(t)}}(1-u(s,\theta,t))|J(s,\theta)|dsd\theta+O(e^{-\frac{1}{t^{\delta_0}}})\\
 &=&\sum_{m=1}^M\int_{\hat{x}(\theta)\in\del \Omega\cap\overline{\Omega}_{m,+\eta(t)}}d\theta \int_{]0,\eta(t)[}ds (1-u(s,\theta,t))|J(s,\theta)|+O(e^{-\frac{1}{t^{\delta_0}}}).\\
\end{eqnarray*}
Since this local representation holds for all $m$ (the form of the
problem~(\ref{eq47})--(\ref{endsys}) is the same for all $m$) and
$\sum_{m=1}^M\int_{\hat{x}(\theta)\in\del
\Omega\cap\overline{\Omega}_{m,+\eta}}d\theta=\int_{\del \Omega} \sH^{n-1}( d 
\sigma)=\sH^{n-1}(\del \Omega)$, we can formally write
\begin{align}
N(t)&=\int_{\del \Omega} \sH^{n-1}(d \sigma) \int_{]0,\eta(t)[}ds (1-u(s,\sigma,t))|J(s,\sigma)|+O(e^{-\frac{1}{t^{\delta_0}}}),\label{nnloc}
\end{align}
where $u$ is the solution of~(\ref{eq47})--(\ref{endsys}) in
$]-\eta(t),\eta(t)[\times \del \Omega$ in the local sense, as explained
previously.

First, we give the approximation of the heat content associated to the solution of the system~\eqref{prb1}--\eqref{prb1end} with a constant $\Lambda$ by the solution of the one-dimensional constant-coefficient
problem.
The key point is that, according to
\cite[p.~48-49]{MCKEAN-1967}, due to Varadhan's bound property of
the Green's function, locally, the difference between the
Green's function of the problem in the local coordinates with ``frozen'' coefficients in one boundary point and the analogous Green's function of the constant coefficient problem in the half
space in $\R^n$ is exponentially small, see~\cite[p.80]{BARDOS-2016}. Then we also approximate it by the Green function of the simplified operator $D_\pm (\frac{\del^2}{\del s^2}+\sum_{i=1}^{n-1}\frac{\del^2}{\del \theta_i^2})+D_\pm k\frac{\del}{\del s}$ with constant coefficients. %
Then, we use Duhamel's formula to construct a parametrix~\cite{MCKEAN-1967} by the explicitly known Green's function in the half-space for this simplified operator~\cite{BARDOS-2016}. %
Finally, this allows to prove Theorem~\ref{ThFinalr}.

\subsection{Heat content for the particular case $D_+=D_{-}=\const$}\label{SSpartic}
In this subsection, we consider a simplified case where $D_+=D_{-}$. We refine the results from~\cite{BARDOS-2016}, providing more precise estimates of the Minkowski sausage $\Omega_{\eta}$.
We then examine the principle of `not feeling the boundary' and perform an asymptotic estimation of $N(t)$ for arbitrary domains. 
The asymptotics become more precise for a $d$-Minkowski measurable boundary and subsequently for a regular $C^3$-boundary. 
All these results are novel. 
Additionally, we establish an initial result for an admissible domain, defined as the limit of a sequence of regular domains. 
All the obtained asymptotic expansions of $N(t)$ explicitly reveal the relationship with the volume of the Minkowski sausage of $\partial \Omega$ and clarify thus the de Gennes hypothesis discussed in the introduction.

Let us simplify the system~\eqref{prb1}--\eqref{prb1end} and consider instead 
the following problem with a constant $D>0$:
\begin{align}
  \del_t u-D \Delta u&=0 \quad x\in \R^n, \; t>0,\label{pch}\\
 u|_{t=0}&=\mathds{1}_{\Omega }.\label{ps}
\end{align}
Then, we follow~\cite{BARDOS-2016} by updating it. We have
\begin{lemma}
Let $\Omega \subset\R^n$ be an arbitrary connected bounded open set and let
\begin{equation}\label{EqSolSimple}
 u(x,t)= \int_{\R^n}(4D\pi t)^{-\frac{n}{2}} \exp\left(-\frac{|x-y|^2}{4Dt}\right)\mathds{1}_{\Omega }(y)\dy 	
 \end{equation}
be the solution of system~\eqref{pch}--\eqref{ps}. 
Then for all $p\in ]0,\frac{1}{2}[$ and $\eta(t)=O(t^p)$ (or equivalently, $\eta(t)=O(t^{\frac{1}{2}-\delta})$ for some $\delta\in ]0,\frac{1}{2}[$) there exists a constant $\delta_0>0$ (depending on $p$ and $n$) such that the corresponding heat content
$N(t)$ can be calculated for $t\to0+$ by
\begin{equation}\label{EqExp1base}
	N(t)=\int_{\Omega_{\eta(t)}}\left(1-u(x,t)\mathds{1}_\Omega(x)\right)\dx+O(e^{- 1/t^{\delta_0}})
\end{equation}
and also by
\begin{equation}\label{EqExp2base}
	N(t)=\int_{\R^n}\frac{1}{\pi^\frac{n}{2}}e^{-|v|^2} \left[\int_{\Omega_{\eta(t)}}\left(\mathds{1}_{\Omega_{\eta(t)}}(x)-\mathds{1}_{\Omega -2\sqrt{Dt}v}(x)\right)\dx\right]\dv+O(e^{- 1/t^{\delta_0}}).
\end{equation}
Here $\mathds{1}_{\Omega-2\sqrt{Dt}v }(x)=\mathds{1}_{\Omega
}(x+2\sqrt{Dt}v)$ and the notation $\Omega-2\sqrt{Dt}v$ means that
$\Omega$ is shifted by the vector $-2\sqrt{Dt}v\in \R^n$.
\end{lemma}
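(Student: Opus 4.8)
The plan is to prove the two identities in sequence, using \eqref{EqExp1base} as a stepping stone to \eqref{EqExp2base}. The only genuinely analytic ingredient is a Gaussian tail estimate for the free heat kernel $p_t(z):=(4D\pi t)^{-n/2}\exp(-|z|^2/(4Dt))$; everything else is an exact change of variables combined with Fubini.

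\emph{Establishing \eqref{EqExp1base}.} First I would record that, since $u$ solves the heat equation on all of $\R^n$ with integrable datum $\mathds{1}_\Omega$, total heat is conserved, $\int_{\R^n}u(x,t)\dx=\lambda_n(\Omega)$, so that $N(t)=\lambda_n(\Omega)-\int_\Omega u(x,t)\dx=\int_\Omega(1-u(x,t))\dx$, and $0\le u\le1$ because $u$ is an average of $\mathds{1}_\Omega$ against a probability kernel. As $\Omega_{\eta(t)}\subset\Omega$, we have $\mathds{1}_\Omega\equiv1$ on $\Omega_{\eta(t)}$, so \eqref{EqExp1base} is equivalent to
\[
\int_{\Omega\setminus\Omega_{\eta(t)}}(1-u(x,t))\dx=O\bigl(e^{-1/t^{\delta_0}}\bigr).
\]
For $x\in\Omega\setminus\Omega_{\eta(t)}$ the open ball $B_{\eta(t)}(x)$ lies in $\Omega$ (connected, containing $x\in\Omega$, disjoint from $\del\Omega$), hence, writing $\rho(t):=\eta(t)/\sqrt{4Dt}$ and using $1=\int_{\R^n}p_t(x-y)\dy$,
\[
1-u(x,t)=\int_{\R^n\setminus\Omega}p_t(x-y)\dy\le\int_{|z|\ge\eta(t)}p_t(z)\dz=\int_{|w|\ge\rho(t)}\pi^{-n/2}e^{-|w|^2}\dw\le c_n\,e^{-\rho(t)^2/2}.
\]
Since $\eta(t)$ is of order $t^p$ with $p<\tfrac12$, we get $\rho(t)^2/2=\eta(t)^2/(8Dt)\ge c\,t^{-(1-2p)}$ for all small $t$ and some $c>0$; choosing any $\delta_0\in\,]0,1-2p[$ then absorbs the constant $c_n$ and yields $\sup_{x\in\Omega\setminus\Omega_{\eta(t)}}(1-u(x,t))\le C\,e^{-1/t^{\delta_0}}$. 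Multiplying by $\lambda_n(\Omega)<\infty$ finishes this step.

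\emph{Deriving \eqref{EqExp2base}.} In \eqref{EqSolSimple} I would substitute $y=x-2\sqrt{Dt}\,v$, so that $p_t(x-y)\dy=\pi^{-n/2}e^{-|v|^2}\dv$ and, after the change $v\mapsto-v$ exploiting the evenness of $e^{-|v|^2}$,
\[
u(x,t)=\int_{\R^n}\pi^{-n/2}e^{-|v|^2}\,\mathds{1}_\Omega(x+2\sqrt{Dt}v)\dv=\int_{\R^n}\pi^{-n/2}e^{-|v|^2}\,\mathds{1}_{\Omega-2\sqrt{Dt}v}(x)\dv.
\]
Combining this with $1=\int_{\R^n}\pi^{-n/2}e^{-|v|^2}\dv$ and $\mathds{1}_{\Omega_{\eta(t)}}(x)=1$ for $x\in\Omega_{\eta(t)}$, I obtain, for such $x$,
\[
1-u(x,t)=\int_{\R^n}\pi^{-n/2}e^{-|v|^2}\bigl(\mathds{1}_{\Omega_{\eta(t)}}(x)-\mathds{1}_{\Omega-2\sqrt{Dt}v}(x)\bigr)\dv.
\]
Integrating over $x\in\Omega_{\eta(t)}$ and interchanging the two integrals — legitimate because the integrand is dominated by $\pi^{-n/2}e^{-|v|^2}$, $\lambda_n(\Omega_{\eta(t)})<\infty$ and $e^{-|v|^2}\in L^1(\R^n)$ — turns \eqref{EqExp1base} (recalling once more that $\mathds{1}_\Omega\equiv1$ on $\Omega_{\eta(t)}$) into \eqref{EqExp2base}.

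\emph{Where the difficulty sits.} There is no conceptual obstacle; the one point requiring care is quantitative bookkeeping, namely ensuring that $\delta_0$ depends only on $p$ and $n$. This forces one to carry the rescaled radius $\rho(t)=\eta(t)/\sqrt{4Dt}$ explicitly and to use that $\eta(t)$ is comparable to $t^p$ — in particular the lower bound $\eta(t)\gg\sqrt t$, consistent with the constraint $\eta(t)>\sqrt{4D_+t}$ appearing in \eqref{ErrOm}; an upper bound on $\eta(t)$ alone would not make the boundary layer $\Omega\setminus\Omega_{\eta(t)}$ negligible. Note that boundedness of $\Omega$ enters only through $\lambda_n(\Omega)<\infty$, openness only through the inclusion $B_{\eta(t)}(x)\subset\Omega$, and connectedness plays no role in this lemma.
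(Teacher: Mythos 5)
Your proof is correct, and for the first identity it takes a genuinely more elementary route than the paper. The paper establishes \eqref{EqExp1base} by passing to $\hat u=1-u$, comparing it via the maximum principle with the solution $w$ of an auxiliary Dirichlet problem on $\Omega$ (boundary data $1$, initial data $0$), and invoking the bound of Falconer (Lemma 12.7 / Corollary 12.8) to get $w(z,t)\le C\bigl(\dist(z,\del\Omega)/\sqrt{4Dt}\bigr)^{n-2}\exp\bigl(-\dist(z,\del\Omega)^2/(4Dt)\bigr)$; you instead exploit that in this lemma $u$ is an explicit convolution with the free heat kernel, so $1-u(x,t)=\int_{\R^n\setminus\Omega}p_t(x-y)\dy$ is controlled pointwise by a bare Gaussian tail once $B_{\eta(t)}(x)\subset\Omega$. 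Your bound is self-contained and avoids the comparison machinery; the paper's route is the one that survives in the general two-media problem of Sections~\ref{SubsReg}--\ref{SecGeneralFrac}, where no explicit kernel is available, which is presumably why the authors reuse it here. For \eqref{EqExp2base} both arguments reduce to the substitution $y=x-2\sqrt{Dt}\,v$; the paper first derives the global identity with inner integral over all of $\Omega$ and then ``combines with \eqref{EqExp1base},'' whereas you localize to $\Omega_{\eta(t)}$ first and then change variables, which makes the passage from $\mathds{1}_\Omega$ to $\mathds{1}_{\Omega_{\eta(t)}}$ in the stated formula explicit. One point you handle more carefully than the statement itself: the hypothesis $\eta(t)=O(t^p)$ is literally only an upper bound, while the error estimate needs $\eta(t)/\sqrt{t}\to\infty$ at a polynomial rate (the paper's own proof tacitly uses the same lower bound when it writes $\dist(F,\del\Omega)\ge O(t^p)$); your closing remark correctly identifies this as the operative assumption.
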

\begin{proof}
	In the proof of expansion~\eqref{EqExp1base}, we use the principle ``not feel the boundary'' as in the general case. For the reader's convenience, we give the details of the proof.
	
	The solution of system~\eqref{pch}--\eqref{ps} $u(x,t)$, which is obviously $0\le u(x,t)\le 1$ for all $x\in \R^n$ and $t>0$, can be transformed to the solution
 $\hat{u}=1-u$ of the following system
 \begin{align}
 \left(\del_t -D\Delta\right) \hat{u}&=0,\quad x\in \R^n,\\
 \hat{u}|_{t=0}&=\mathds{1}_{\overline{\Omega^c}}.
\end{align}
 Once again, $\hat{u}$ takes values in $[0,1]$ and decreasing in time on $\del \Omega$. Therefore, considering its values on $\Omega$, we find that
 $\hat{u}\le w$ on $\Omega$ for $t\ge 0$ for $w$, the solution of the problem
 \begin{align}
 \left(\del_t -D\Delta\right) w&=0,\quad x\in \Omega,\\
w|_{t=0}&=0, \\
w|_{\del \Omega}&=1.
\end{align}
Following the approach of~\cite{FALCONER-1997} (p.231 Lemma 12.7), we find that for the domain $\Omega$ equal to a ball $\Omega=B_r(z)$
centered at $z\in \R^n$ and of radius $r>0$, the solution satisfies as $t\to +0$
\begin{equation*}
w(z,t)\le C\left(\frac{r}{\sqrt{4D_+t}}\right)^{n-2} \exp\left(-\frac{r^2}{4D_+t}\right) ,
\end{equation*}
with a constant $C>0$ depending only on $n$ ($C$ can be explicitly
obtained by the integration by parts in the generalized spherical
coordinates in $\R^n$, where the coefficient
$\left(\frac{r}{\sqrt{4D_+t}}\right)^{n-2}$ corresponding to the leading
term as $t\to +0$, appears from the integral
$\int_{\frac{r}{\sqrt{4Dt}}}^{+\infty}e^{-p^2}p^{n-1}dp$).
Thus, see~\cite{BARDOS-2016} Lemma~3.1 which uses~\cite{FALCONER-1997} Corollary 12.8 p.232, for
$z\in \textrm{int} \{ \Omega\}$ and $t\to +0$ we find
\begin{equation}\label{vest}
w(z,t)\le C \left(\frac{\dist(z,\del \Omega)}{\sqrt{4D_+t}}\right)^{n-2}\exp\left(-\frac{\dist(z,\del \Omega)^2}{4D_+t}\right).
\end{equation}
This means that if
$F\subset \Omega$ is a nonempty open bounded set in $\R^n$, such
that $\operatorname{dist}(F,\del \Omega)=r>0$, then we have as $t\to +0$
\begin{equation}\label{errF}
\begin{split}
 \int_F(1-u(x,t)\mathds{1}_\Omega)\dx & =\int_F \left(1- \int_\Omega G(x,y,t)\dy\right) \dx \\
& =O\left(\left(\frac{r}{\sqrt{4D_+t}} \right)^{n-2}e^{- r^2/(4D_+t)}\right). \\
\end{split}
\end{equation}

Let us consider $r$ wich ensures the exponential decay in Eq.~(\ref{errF}), $i.e.$ the existence of a positive constant $\delta_0>0$ such that %
\begin{equation}\label{EqDelta0}
O\left(\left(\frac{r}{\sqrt{4Dt}}
\right)^{n-2}e^{- r^2/(4Dt)}\right)=O(e^{- 1/t^{\delta_0}}).
\end{equation}
We see that if the distance $\operatorname{dist}(F,\del \Omega)=r(t)=O(t^p)$ with $p<\frac{1}{2}$ then there exists $\delta_0(p,n)>0$ depending on $p$ and the dimension $n$ such than
\begin{equation}\label{EqFu}
	\int_F(1-u(x,t)\mathds{1}_\Omega)\dx=O(e^{- 1/t^{\delta_0(p)}}).
\end{equation}
Thus, we consider $\eta(t)=O(t^p)$ and decompose $\Omega=\Omega_{\eta(t)}\cup (\Omega\setminus \overline{\Omega_{\eta(t)}})$ in a way that for a $\delta_0>0$ sufficiently small
\begin{align*}
	\notag N(t)&= \int_{\Omega_{\eta(t)}}(1-u(x,t)\mathds{1}_\Omega)\dx+\int_{\Omega\setminus \overline{\Omega_{\eta(t)}}}(1-u(x,t)\mathds{1}_\Omega)\dx\\
	&=\int_{\Omega_{\eta(t)}}(1-u(x,t)\mathds{1}_\Omega)\dx+O(e^{- 1/t^{\delta_0}}).
\end{align*}
We have just noticed that for all $F\subset \Omega\setminus \overline{\Omega_{\eta(t)}}$ we have $\operatorname{dist}(F,\del \Omega)\ge O(t^p)>O(\sqrt{t})$ which is equivalent to the existence of a $p< \frac{1}{2}$ such that $\operatorname{dist}(F,\del \Omega)=O(t^p)$ and~\eqref{EqFu} holds. Thus, we found expansion~\eqref{EqExp1base}.

To obtain~\eqref{EqExp2base} we proceed in the following way:
By definition,
\begin{align}
 N (t)&=\int_{\R^n\setminus\overline{\Omega}}\int_{\R^n}G(x,y,t)\mathds{1}_{\Omega }(x)\dx\dy,
\end{align}
where this time $G$ is the heat kernel in $\R^n$
\begin{equation*}
G(x,y,t)= (4D\pi t)^{-\frac{n}{2}} \exp\left(-\frac{|x-y|^2}{4Dt}\right).
\end{equation*}
Therefore, we have
 \begin{align*}
 	N(t)&= \int_{\Omega^c}\int_\Omega G(x,y,t)\dy \dx 
    \\
 	&=\int_{\R^n}\int_\Omega G(x,y,t)\dy \dx-\int_\Omega \int_\Omega G(x,y,t)\dy \dx\\
&= \operatorname{Vol}(\Omega)-\int_\Omega \int_\Omega G(x,y,t)\dy \dx\\
&=\mathrm{Vol}(\Omega)-\int_{\R^n}\frac{1}{\pi^\frac{n}{2}}e^{-|v|^2}
 \left( \int_{\R^n}\mathds{1}_{\Omega }(x)\mathds{1}_{\Omega }(x+2\sqrt{Dt}v)\dx\right)\dv\\
&=\int_{\R^n}\frac{1}{\pi^\frac{n}{2}}e^{-|v|^2} \left[\int_{\Omega }\left(\mathds{1}_{\Omega }(x)-\mathds{1}_{\Omega -2\sqrt{Dt}v}(x)\right)\dx\right]\dv.
 \end{align*}
Then we combine the last formula with~\eqref{EqExp1base} and obtain~\eqref{EqExp2base}.
\end{proof}
This localization around the boundary can be precised more if we consider smaller distances from the boundary with the radius depending on the diffusion length, going closer to de Gennes hypothesis:
\begin{proposition}\label{PropArbSC}
	Let $\Omega \subset\R^n$ be a connected bounded arbitrary open set and $u$ is the solution of system~\eqref{pch}--\eqref{ps} defined in \eqref{EqSolSimple}. 
	Let %
	$\delta\in ]0,\frac{1}{2}[$ be chosen. %

Then  %
there exists $\delta_0(\delta,n)>0$ independent of $t$, such that, as $t\to 0+$, 
 \begin{equation}\label{EqResPol1}
 	N(t)\le 	\int_0^{t^{-\delta}}\frac{r^{n-1}}{\sqrt{\pi}}e^{-r^2} \mu(\del \Omega,r \sqrt{4Dt})dr+O\left(e^{-\frac{1}{t^{\delta_0}}}\right),
 \end{equation}
 and
 \begin{multline}\label{EqOcenkaSnizu}
 	N(t)\ge \int_{\R^n}\frac{1}{\pi^\frac{n}{2}}e^{-|v|^2} \left[\int_{\Omega_{\sqrt{4Dt}}}\left(\mathds{1}_{\Omega_{\sqrt{4Dt}} }(x)-
 \mathds{1}_{\Omega_{\sqrt{4Dt}} -2\sqrt{Dt}v}(x)\right)\dx\right]\dv\\=O\left(\mu(\del \Omega, \sqrt{4Dt})\right). 
 \end{multline}
\end{proposition}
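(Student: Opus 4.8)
The plan is to extract both inequalities from the two representations of $N(t)$ furnished by the preceding Lemma: the truncated formula \eqref{EqExp2base} for the upper bound, and for the lower bound the \emph{exact} identity
\[
N(t)=\int_{\R^n}\frac{1}{\pi^{n/2}}e^{-|v|^2}\left[\int_\Omega\big(\mathds{1}_\Omega(x)-\mathds{1}_{\Omega-2\sqrt{Dt}v}(x)\big)\dx\right]\dv
\]
established in the proof of that Lemma. The single geometric ingredient needed is the elementary \emph{crossing observation}: if $x\in\Omega$ and $x+w\notin\Omega$, then the segment $[x,x+w]$ meets $\del\Omega$, so $\dist(x,\del\Omega)<|w|$; equivalently $\{x\in\Omega:\ x+w\notin\Omega\}\subseteq\Omega_{|w|}$, whence $\lambda_n\big(\Omega\setminus(\Omega-w)\big)\le\mu(\del\Omega,|w|)$.

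\emph{Upper bound \eqref{EqResPol1}.} I apply \eqref{EqExp2base} with the admissible choice $\eta(t):=t^{-\delta}\sqrt{4Dt}$, which is $O(t^{1/2-\delta})$; the Lemma then provides some $\delta_0=\delta_0(\delta,n)>0$. On $\Omega_{\eta(t)}$ the first indicator in \eqref{EqExp2base} equals $1$, so the bracket there is $\lambda_n\big(\{x\in\Omega_{\eta(t)}:\ x+2\sqrt{Dt}v\notin\Omega\}\big)$, which by the crossing observation is at most $\mu\big(\del\Omega,\min(\eta(t),\,2\sqrt{Dt}|v|)\big)$. I then pass to polar coordinates in the $v$-variable (so that $\dv$ contributes the factor $r^{n-1}\dr$ times the area of $S^{n-1}$, producing the dimensional constant in front of the integral); because $\eta(t)=t^{-\delta}\sqrt{4Dt}$, one has $\min(\eta(t),2\sqrt{Dt}r)=r\sqrt{4Dt}$ exactly for $r\le t^{-\delta}$, which gives the integral written in \eqref{EqResPol1}. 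On the complementary range $r>t^{-\delta}$ I estimate $\mu(\del\Omega,\eta(t))\le\lambda_n(\Omega)$ and use that $\int_{t^{-\delta}}^{\infty}r^{n-1}e^{-r^2}\dr$ is superpolynomially small as $t\to0+$, so this contribution is absorbed into the $O(e^{-1/t^{\delta_0}})$ term (after shrinking $\delta_0$ if needed).

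\emph{Lower bound \eqref{EqOcenkaSnizu}.} No localisation is needed here. Starting from the exact identity above and using $\mathds{1}_{\Omega-2\sqrt{Dt}v}(x)=\mathds{1}_\Omega(x+2\sqrt{Dt}v)$, the inner integrand over $x\in\Omega$ equals $1-\mathds{1}_\Omega(x+2\sqrt{Dt}v)\ge0$; since $\Omega_{\sqrt{4Dt}}\subseteq\Omega$, restricting the $x$-integration to $\Omega_{\sqrt{4Dt}}$ can only decrease the integral, so
\[
N(t)\ \ge\ \int_{\R^n}\frac{1}{\pi^{n/2}}e^{-|v|^2}\left[\int_{\Omega_{\sqrt{4Dt}}}\big(\mathds{1}_{\Omega_{\sqrt{4Dt}}}(x)-\mathds{1}_{\Omega-2\sqrt{Dt}v}(x)\big)\dx\right]\dv,
\]
which is the asserted lower bound (the shifted set in the inner indicator being $\Omega$, as in \eqref{EqExp2base}). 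For the size statement, the inner integral is $\le\lambda_n(\Omega_{\sqrt{4Dt}})=\mu(\del\Omega,\sqrt{4Dt})$ by \eqref{EqDefmu}, while $\int_{\R^n}\pi^{-n/2}e^{-|v|^2}\dv=1$; hence the whole expression lies in $\big[0,\mu(\del\Omega,\sqrt{4Dt})\big]$, i.e.\ is $O\big(\mu(\del\Omega,\sqrt{4Dt})\big)$.

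\emph{Where the difficulty lies.} The lower bound is a one-line monotonicity argument once the exact representation of $N(t)$ is in hand. The delicate point is the upper bound, and more precisely the need to reconcile two demands on $\eta(t)$: it must be $O(t^{1/2-\delta})$ for the Lemma's localisation \eqref{EqExp2base} to apply, and it must equal $t^{-\delta}\sqrt{4Dt}$ so that the cut-off $\min(\eta(t),2\sqrt{Dt}|v|)$ becomes exactly the truncation at $|v|=t^{-\delta}$ appearing in \eqref{EqResPol1}. Verifying that the discarded tail $|v|>t^{-\delta}$ is uniformly $O(e^{-1/t^{\delta_0}})$ — using only the crude bound $\mu(\del\Omega,\cdot)\le\lambda_n(\Omega)$ and Gaussian decay — is the single nontrivial estimate.
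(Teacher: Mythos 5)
Your proof of the upper bound \eqref{EqResPol1} follows the same route as the paper: apply \eqref{EqExp2base} with $\eta(t)=\sqrt{4D}\,t^{1/2-\delta}$, observe that $\{x\in\Omega:\ x+w\notin\Omega\}\subset\Omega_{|w|}$ (the paper states the contrapositive of your crossing observation), bound the inner bracket by $\mu(\del \Omega,|v|\sqrt{4Dt})$ for $|v|\le t^{-\delta}$, discard the Gaussian tail, and pass to polar coordinates. Your handling of the tail $|v|>t^{-\delta}$ via $\mu(\del\Omega,\cdot)\le\lambda_n(\Omega)$ is in fact more explicit than the paper's. (Both you and the paper gloss over the polar-coordinate constant: integrating $\pi^{-n/2}e^{-|v|^2}$ over $S^{n-1}$ yields $2/\Gamma(n/2)$, which for small $n$ exceeds the $1/\sqrt{\pi}$ appearing in \eqref{EqResPol1}; since this multiplies an upper bound it cannot simply be shrunk. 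This is a defect of the statement rather than of your argument, but worth flagging.)

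The genuine discrepancy is in the lower bound. What you actually prove is
\[
N(t)\ \ge\ \int_{\R^n}\frac{e^{-|v|^2}}{\pi^{n/2}}\left[\int_{\Omega_{\sqrt{4Dt}}}\Bigl(\mathds{1}_{\Omega_{\sqrt{4Dt}}}(x)-\mathds{1}_{\Omega-2\sqrt{Dt}v}(x)\Bigr)\dx\right]\dv ,
\]
with the \emph{full domain} $\Omega$ shifted; that step (restricting a non-negative integrand to a subset) is correct. But \eqref{EqOcenkaSnizu} as written shifts the sausage $\Omega_{\sqrt{4Dt}}$ itself, and since $\mathds{1}_{\Omega_{\sqrt{4Dt}}-2\sqrt{Dt}v}(x)\le\mathds{1}_{\Omega-2\sqrt{Dt}v}(x)$, the right-hand side of \eqref{EqOcenkaSnizu} \emph{dominates} your expression and is not implied by it; your parenthetical remark that ``the shifted set \ldots being $\Omega$'' misreads the statement. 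Indeed the literal inequality \eqref{EqOcenkaSnizu} is doubtful: for $\Omega=(0,1)\subset\R^1$ the sausage is a union of two boundary intervals, so $\lambda_1(\Omega_{\sqrt{4Dt}}\setminus(\Omega_{\sqrt{4Dt}}-w))\approx 2|w|$ while $\lambda_1(\Omega\setminus(\Omega-w))=|w|$ for small $|w|$, making the displayed integral asymptotically $2N(t)$. The paper's own one-line justification (non-negativity of the integral over $\Omega_{\eta(t)}\setminus\overline{\Omega_{\sqrt{4Dt}}}$) does not close this gap either; your variant is the one that is actually provable, and the concluding size estimate $O(\mu(\del\Omega,\sqrt{4Dt}))$ does hold for it. You should state the mismatch explicitly rather than assert that the two expressions coincide.
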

\begin{proof}
	We start with formula~\eqref{EqExp2base}. The class $O(t^p)$ with $p<\frac{1}{2}$ ($p>0$) can be also expressed by $p=\frac{1}{2}-\delta$ with some $\delta\in ]0,\frac{1}{2}[$. Hence, for a fixed such $\delta$ we take instead of a constant $\eta$ in \eqref{errF}, a function depending on $t$ of the form
\begin{equation}\label{EqEta}
	\eta(t)=\sqrt{4D}t^{\frac{1}{2}-\delta}
\end{equation}
which leads to the decomposition of $\Omega$ in
$$\Omega=\Omega_{\eta(t)}\cup (\Omega\setminus \overline{\Omega_{\eta(t)}}).$$
For all $F\subset \Omega\setminus \overline{\Omega_{\eta(t)}}$, $\operatorname{dist}(F,\del \Omega)>\sqrt{4D}t^{\frac{1}{2}-\delta}$ and thus there exists (a minimal uniform on $t$ rate depending on the space dimension $n$ by formula~\eqref{EqDelta0}) $\delta_0(\delta,n)>0$ such that
the error decreases exponentially
\begin{equation*}
 \int_F\left(1-u(x,t)\mathds{1}_\Omega(x)\right)\dx\le O(e^{- 1/t^{\delta_0}}).
\end{equation*} 
\begin{equation*}
 	N(t)= \int_{\R^n}\frac{1}{\pi^\frac{n}{2}}e^{-|v|^2} \left[\int_{\Omega_{\eta(t)} }\left(\mathds{1}_{\Omega }(x)-
 \mathds{1}_{\Omega -2\sqrt{Dt}v}(x)\right)\dx\right]\dv+O\left(e^{-\frac{1}{t^{\delta_0}}}\right).
 \end{equation*}
 Let us fix $t$ and $v$. Then, for all $x\in \Omega_{\eta(t)}$ such that $\operatorname{dist}(x,\del \Omega)>\sqrt{4Dt}|v|$, 
it holds $(x+2\sqrt{Dt}v)\in\Omega $.
Thus, it follows that %
\begin{equation}\label{EqChivt}
\chi_{(v,t)}(x)=\mathds{1}_{\Omega_{\eta(t)} }(x)\left(\mathds{1}_{\Omega }(x)-\mathds{1}_{\Omega -2\sqrt{Dt}v}(x)\right)=0 \hbox{ for all } x\in\Omega_{\eta(t)} \setminus\Omega_{\sqrt{4Dt}|v|},
\end{equation}
where, as previously, the notation $\Omega-2\sqrt{Dt}v$ means that
$\Omega$ is shifted by the vector $-2\sqrt{Dt}v\in \R^n$: $\mathds{1}_{\Omega-2\sqrt{Dt}v }(x)=\mathds{1}_{\Omega
}(x+2\sqrt{Dt}v)$.

The set $\Omega_{\eta(t)} \setminus\Omega_{\sqrt{4Dt}|v|}$ is not exact set when $\chi_{(v,t)}(x)=0$ as soon as there are directions of $v$ which move $x\in \Omega_{\sqrt{4Dt}|v|}$ inside of $\Omega$ with $(x+2\sqrt{Dt}v)\in\Omega_{\eta(t)} $. If $\del \Omega$ is not regular, the question of how to separate the inside directions of $v$ from the outside directions is an open problem.

We notice that we need only $0<|v|<t^{-\delta}$ to ensure $x\in \Omega_{\eta(t)}$.

 We also have (see~\eqref{EqChivt})
\begin{equation*}
	0\le \int_{\Omega_{\eta(t)}}\left(\mathds{1}_{\Omega_{\eta(t)} }(x)-
 \mathds{1}_{\Omega_{\eta(t)} -2\sqrt{Dt}v}(x)\right)\dx\le \mu(\del \Omega, |v|\sqrt{4Dt}).
\end{equation*}
 Hence, we estimate $N(t)$ by 
 \begin{equation}\label{EqRes1}
 	N(t)\le \int_{|v|\le t^{-\delta}}\frac{1}{\pi^\frac{n}{2}}e^{-|v|^2} \mu(\del \Omega,|v| \sqrt{4Dt})dv +O\left(e^{-\frac{1}{t^{\delta_0}}}\right).
 \end{equation}
 Passing to polar coordinates in \eqref{EqRes1}, we find \eqref{EqResPol1}.
 
 At the same time we notice that, since $\eta(t)>\sqrt{4Dt}$,  
 \begin{equation*}
 	\int_{\Omega_{\eta(t)}\setminus \overline{\Omega_{\sqrt{4Dt}}}}\left(\mathds{1}_{\Omega_{\eta(t)} }(x)-
 \mathds{1}_{(\Omega_{\eta(t)}\setminus \overline{\Omega_{\sqrt{4Dt}}}) -2\sqrt{Dt}v}(x)\right)\dx\ge 0,
 \end{equation*}
and so \eqref{EqOcenkaSnizu} follows.
\end{proof}

Let us now take a bounded domain $\Omega$ in $\R^n$ with a
 $d$-dim.\ Minkowski measurable boundary $\del \Omega$. 
The following proposition is implied by \eqref{EqResPol1}, \eqref{EqExpmu1} and \eqref{EqMainAss}.

\begin{proposition}\label{PropPCMink}
	Let $\Omega \subset \mathbb{R}^n$ be a connected bounded domain with a $d$-dimensional Minkowski measurable boundary $\partial \Omega$, and let $u$ be the solution of system~\eqref{pch}--\eqref{ps} defined in~\eqref{EqSolSimple}. 

	For a fixed $\delta \in ]0, \frac{1}{2}[$, we have, as $t \to 0^+$,
\begin{equation}\label{EqNtSverhuExp} 	
		N(t) \le \left(\int_0^{t^{-\delta}} \frac{r^{2n-1-d}}{\sqrt{\pi}} e^{-r^2} \, dr\right) \mu(\partial \Omega, \sqrt{4Dt}) + o(t^{\frac{n-d}{2}}),
	\end{equation}
	or, equivalently,
\begin{equation}\label{EqNtSverhuExpP} 	
		N(t) \le \left(\int_0^{t^{-\delta}} \frac{r^{2n-1-d}}{\sqrt{\pi}} e^{-r^2} \, dr\right) \mathcal{M}^d(\partial \Omega, \Omega) (4Dt)^{\frac{n-d}{2}} + o(t^{\frac{n-d}{2}}).
	\end{equation}

	Moreover, integrating over $r$ up to infinity in~\eqref{EqNtSverhuExp} and~\eqref{EqNtSverhuExpP} does not affect the remainder terms.
\end{proposition}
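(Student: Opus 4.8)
The plan is to feed the Minkowski-measurability expansion \eqref{EqMainAss} of the parallel volume into the upper bound \eqref{EqResPol1}; the only delicate point is that this substitution must be carried out \emph{uniformly} over the whole range of integration. Fixing $\delta\in\,]0,\tfrac12[$, I would start from \eqref{EqResPol1}, which provides a $\delta_0=\delta_0(\delta,n)>0$ with
\[
N(t)\le \int_0^{t^{-\delta}}\frac{r^{n-1}}{\sqrt{\pi}}\,e^{-r^2}\,\mu(\partial\Omega,\,r\sqrt{4Dt})\,dr+O\!\left(e^{-1/t^{\delta_0}}\right),\qquad t\to 0+ ,
\]
and observe first that, since $\delta<\tfrac12$, for every $r\in\,]0,t^{-\delta}]$ the effective radius $\rho:=r\sqrt{4Dt}$ satisfies $0<\rho\le\sqrt{4D}\,t^{\frac12-\delta}=:\rho_0(t)$, with $\rho_0(t)\to 0$ as $t\to 0+$.

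I would then set $E(\rho_0):=\sup_{0<\rho\le\rho_0}\bigl|\rho^{d-n}\mu(\partial\Omega,\rho)-\mathcal{M}^d(\partial\Omega,\Omega)\bigr|$; by \eqref{EqMainAss} this quantity is finite for $\rho_0$ small enough and tends to $0$ as $\rho_0\to0+$. It follows, uniformly in $r\in\,]0,t^{-\delta}]$, that
\[
\mu(\partial\Omega,\,r\sqrt{4Dt})=(r\sqrt{4Dt})^{\,n-d}\bigl(\mathcal{M}^d(\partial\Omega,\Omega)+\varepsilon_t(r)\bigr),\qquad \sup_{0<r\le t^{-\delta}}|\varepsilon_t(r)|\le \omega(t):=E\!\bigl(\sqrt{4D}\,t^{\frac12-\delta}\bigr),
\]
where $\omega(t)\to0$ as $t\to0+$. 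Substituting this into the bound above, pulling out the factor $(4Dt)^{\frac{n-d}{2}}$ and using $n-1+(n-d)=2n-1-d$, I obtain
\[
N(t)\le\frac{(4Dt)^{\frac{n-d}{2}}}{\sqrt{\pi}}\left(\mathcal{M}^d(\partial\Omega,\Omega)\int_0^{t^{-\delta}}\! r^{2n-1-d}e^{-r^2}\,dr+\int_0^{t^{-\delta}}\! r^{2n-1-d}e^{-r^2}\varepsilon_t(r)\,dr\right)+O\!\left(e^{-1/t^{\delta_0}}\right).
\]
Since $2n-1-d>-1$ (because $d<n$), both integrals are dominated by $\int_0^\infty r^{2n-1-d}e^{-r^2}\,dr=\tfrac12\Gamma(n-\tfrac d2)<\infty$, and the second is in fact $\le\tfrac12\Gamma(n-\tfrac d2)\,\omega(t)=o(1)$; together with $O(e^{-1/t^{\delta_0}})=o(t^{\frac{n-d}{2}})$ this is exactly \eqref{EqNtSverhuExpP}. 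I would deduce \eqref{EqNtSverhuExp} by noting that $\int_0^{t^{-\delta}}r^{2n-1-d}e^{-r^2}\,dr$ stays bounded while, by \eqref{EqMainAss}, $\mathcal{M}^d(\partial\Omega,\Omega)(4Dt)^{\frac{n-d}{2}}$ and $\mu(\partial\Omega,\sqrt{4Dt})$ differ by $o(t^{\frac{n-d}{2}})$ (alternatively, one can reach \eqref{EqNtSverhuExp} directly via a uniform version of the scaling identity \eqref{EqExpmu1} with scaling factor $r$ and base radius $\sqrt{4Dt}$). Lastly, since $\int_{t^{-\delta}}^{\infty}r^{2n-1-d}e^{-r^2}\,dr\to0$ super-exponentially fast as $t\to0+$, extending the $r$-integration to $+\infty$ changes the right-hand sides only by $o(t^{\frac{n-d}{2}})$, which gives the final assertion.

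The step I expect to be the main obstacle is precisely this uniformity: the parallel volume only admits the expansion \eqref{EqMainAss} \emph{as its radius tends to $0$}, whereas here the integration variable $r$ runs all the way up to $t^{-\delta}\to+\infty$. What rescues the argument is the compatibility of the two scales --- because $\delta<\tfrac12$, the effective radius $r\sqrt{4Dt}$ stays at most $\sqrt{4D}\,t^{\frac12-\delta}\to0$ --- together with the Gaussian weight $e^{-r^2}$, which makes the $r$-integral convergent and allows a single modulus-of-continuity bound $\omega(t)$ to control the entire error at once; one must additionally keep track of the finiteness of $\mathcal{M}^d(\partial\Omega,\Omega)$ and of $\Gamma(n-\tfrac d2)$ so that all constants appearing remain finite.
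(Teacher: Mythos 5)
Your argument is correct and follows exactly the route the paper intends: the paper gives no written proof, stating only that the proposition "is implied by \eqref{EqResPol1}, \eqref{EqExpmu1} and \eqref{EqMainAss}", and your derivation is precisely that implication carried out in detail. The uniformity point you flag (effective radius $r\sqrt{4Dt}\le\sqrt{4D}\,t^{1/2-\delta}\to 0$ over the whole integration range, combined with the Gaussian weight and finiteness of $\Gamma(n-\tfrac d2)$) is indeed the only delicate step, and you handle it correctly.
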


Following~\cite{BARDOS-2016}, we also provide the analogous results for regular boundaries:
 \begin{lemma}\label{LemRegDD}
Let $\Omega \subset\R^n$ be a connected bounded $C^3$-domain (implying that its boundary $\del \Omega$ is
 $(n-1)$-dim.\ Minkowski measurable), and let
$u$ be the solution of system~\eqref{pch}--\eqref{ps} given in \eqref{EqSolSimple}. 
Then for a fixed $\delta\in]0,\frac{1}{2}[$, and for $t\to 0+$, 
 it holds
\begin{equation}
  N (t)=\int_{0}^{+\infty}\frac{e^{-z^2}}{\sqrt{\pi}}\mu(\del\Omega,2\sqrt{D t}z)\dz + O\left(\sqrt{t}\mu(\del\Omega,\eta(t))\right)\label{resV}.
 \end{equation}
 In addition,  the heat content is explicitly given by
\begin{equation}\label{eqNtregSpecC}   
		N(t)= \mathcal{M}^{n-1}(\del \Omega,\Omega)\frac{\sqrt{Dt}}{\sqrt{\pi}}+O(t^{\frac{3}{2}-2\delta}).
	\end{equation}
\end{lemma}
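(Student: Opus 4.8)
\emph{Overall plan.} The idea is to specialize the ``not feeling the boundary'' reduction \eqref{EqExp1base} to a $C^3$-domain, where positive reach makes the tube around $\del\Omega$ fibre cleanly, and then to import the half-space/one-dimensional parametrix estimate from the proof of Theorem~\ref{ThFinalr} --- which is much simpler here since $D_+=D_-=D$, the resistivity $\Lambda$ is absent, and $u$ is just the heat convolution \eqref{EqSolSimple}. First I would fix $\delta\in]0,\tfrac12[$ and set $\eta(t)=\sqrt{4D}\,t^{1/2-\delta}$, so that $\eta(t)<\eta_0$ (the reach of $\del\Omega$) for $t$ small; \eqref{EqExp1base} then gives $N(t)=\int_{\Omega_{\eta(t)}}(1-u(x,t))\dx+O(e^{-1/t^{\delta_0}})$. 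Since $\del\Omega\in C^3$ has positive reach, every $x\in\Omega_{\eta(t)}$ is uniquely $x=\sigma-s\,n(\sigma)$ with $\sigma\in\del\Omega$, $s=\dist(x,\del\Omega)\in\,]0,\eta(t)[$, and $\dx=|J(s,\sigma)|\,\ds\,\sH^{n-1}(d\sigma)$ with $|J(s,\sigma)|=\prod_{j=1}^{n-1}(1-sk_j(\sigma))$ as in \eqref{Jacobian}; thus $N(t)=\int_{\del\Omega}\int_0^{\eta(t)}(1-u(\sigma-sn(\sigma),t))\,|J(s,\sigma)|\,\ds\,\sH^{n-1}(d\sigma)+O(e^{-1/t^{\delta_0}})$.

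\emph{Proof of \eqref{resV}.} For $x=\sigma-sn(\sigma)$ I would use the probabilistic representation $1-u(x,t)=\pi^{-n/2}\int_{\R^n}e^{-|v|^2}\1_{\Omega^c}(x+2\sqrt{Dt}\,v)\dv$ together with Varadhan's exponential bound to discard the contribution of $\del\Omega$ outside a ball of radius comparable to $\eta(t)$ around $\sigma$ (as in the proof of Theorem~\ref{ThFinalr}), and compare $\Omega$ near $\sigma$ with its tangent half-space. The $n-1$ tangential components of $v$ then integrate out exactly, leaving $1-u(\sigma-sn(\sigma),t)=\frac1{\sqrt{\pi}}\int_{s/(2\sqrt{Dt})}^{\infty}e^{-w^2}\dd w+R(s,\sigma,t)$, where $R$ is the curvature correction, controlled uniformly in $\sigma$ by the $C^2$-bounds of $\del\Omega$. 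Inserting this and exchanging the order of the $s$- and $w$-integrations (noting $w>s/(2\sqrt{Dt})$ iff $s<2\sqrt{Dt}\,w$) turns the double integral into $\frac1{\sqrt{\pi}}\int_0^{\infty}e^{-w^2}\,\mu(\del\Omega,\min\{2\sqrt{Dt}\,w,\eta(t)\})\,\dd w$ by the tube formula for $\mu$; since $\mu(\del\Omega,\cdot)\le\lambda_n(\Omega)$ is bounded and the two radii differ only for $w>\eta(t)/(2\sqrt{Dt})=t^{-\delta}$, replacing the $\min$ by $2\sqrt{Dt}\,w$ costs only $O(e^{-ct^{-2\delta}})$. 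Collecting the localization error $O(e^{-1/t^{\delta_0}})$ and the curvature term $\int_{\del\Omega}\int_0^{\eta(t)}|R|\,|J|\,\ds\,\sH^{n-1}(d\sigma)=O(\sqrt{t}\,\mu(\del\Omega,\eta(t)))$ yields \eqref{resV}.

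\emph{Proof of \eqref{eqNtregSpecC}.} In \eqref{resV}, boundedness of $\mu(\del\Omega,\cdot)$ confines the $w$-integral to $]0,t^{-\delta}[$ up to an exponentially small error, and there $2\sqrt{Dt}\,w\le\eta(t)<\eta_0$, so the $C^3$-expansion $\mu(\del\Omega,r)=\mathcal{M}^{n-1}(\del\Omega,\Omega)\,r+O(r^2)$ --- obtained by integrating \eqref{Jacobian} in $s$, with $\mathcal{M}^{n-1}(\del\Omega,\Omega)=\sH^{n-1}(\del\Omega)$ --- is available. Using $\int_0^\infty w\,e^{-w^2}\dd w=\tfrac12$,
\[
\frac1{\sqrt{\pi}}\int_0^\infty e^{-w^2}\,\mathcal{M}^{n-1}(\del\Omega,\Omega)\,2\sqrt{Dt}\,w\,\dd w=\mathcal{M}^{n-1}(\del\Omega,\Omega)\,\frac{\sqrt{Dt}}{\sqrt{\pi}},
\]
and the $O(r^2)$ remainder integrates against the Gaussian to a lower-order contribution; combining this with the error $O(\sqrt{t}\,\mu(\del\Omega,\eta(t)))$ of \eqref{resV} and the explicit choice of $\eta(t)$ gives \eqref{eqNtregSpecC}.

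\emph{Main obstacle.} The delicate step is the quantitative local half-space approximation in the proof of \eqref{resV}: making the error $R(s,\sigma,t)$ uniform along $\del\Omega$ requires Varadhan's bound for the heat kernel together with the Duhamel/parametrix estimate of~\cite{MCKEAN-1967,BARDOS-2016}, here in the simplified form where the local model is the constant-coefficient operator $D(\del_s^2+\sum_i\del_{\theta_i}^2)+D k\,\del_s$ (no transmission condition, equal diffusivities), and it is exactly the exact integration of the tangential directions that produces the clean one-dimensional Gaussian in \eqref{resV}. The remaining work --- tracking how the $C^3$-regularity bounds both $R$ and the second-order term of $\mu(\del\Omega,\cdot)$ so as to pin down the precise remainder exponent in \eqref{eqNtregSpecC} --- is routine but must be carried out carefully, together with the standard verification that $\mathcal{M}^{n-1}(\del\Omega,\Omega)=\sH^{n-1}(\del\Omega)$ and that the tube coordinates are legitimate on $\Omega_{\eta(t)}$.
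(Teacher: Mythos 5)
Your treatment of \eqref{resV} is essentially the paper's: localization via the `not feeling the boundary' principle, tube coordinates with the Jacobian \eqref{Jacobian}, a half-space/one-dimensional parametrix in the spirit of McKean--Singer, and the Fubini step that converts $\int_{\del\Omega}\int_0^{\eta(t)}(1-\hat u^{hom})|J|\,\ds\,\sH^{n-1}(d\sigma)$ into $\frac{1}{\sqrt{\pi}}\int_0^\infty e^{-z^2}\mu(\del\Omega,2\sqrt{Dt}z)\,\dz$. This coincides with the paper's closing remark that performing the parametrix on the ordinary heat kernel $K(s,s_1,t)$ yields \eqref{resV} with the remainder $O(\sqrt{t}\,\mu(\del\Omega,\eta(t)))$.

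The gap is in your passage from \eqref{resV} to \eqref{eqNtregSpecC}. First, a bookkeeping obstruction: for a $C^3$ boundary $\mu(\del\Omega,\eta(t))=O(\eta(t))=O(t^{1/2-\delta})$, so the remainder of \eqref{resV} is $O(t^{1-\delta})$, while the target remainder is $O(t^{3/2-2\delta})$; since $\tfrac32-2\delta-(1-\delta)=\tfrac12-\delta>0$, the error you carry over from \eqref{resV} is strictly larger than what \eqref{eqNtregSpecC} claims, for every $\delta\in\,]0,\tfrac12[$. Second, and more substantively, the "$O(r^2)$" you discard in $\mu(\del\Omega,r)=\sH^{n-1}(\del\Omega)\,r+O(r^2)$ has the explicit coefficient $-\tfrac12\int_{\del\Omega}\sum_i k_i\,d\sH^{n-1}$ (from integrating $|J(s,\sigma)|=1-s\sum_ik_i+O(s^2)$), and inserting $r=2\sqrt{Dt}z$ into the Gaussian integral produces the genuine order-$t$ term $-\tfrac{(n-1)Dt}{2}\int_{\del\Omega}H\,d\sH^{n-1}$, which is nonzero in general and not $O(t^{3/2-2\delta})$ when $\delta<\tfrac14$. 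In fact \eqref{eqNtregSpecC} is true only because this term is cancelled by an equal and opposite order-$t$ contribution hidden in your curvature correction $R(s,\sigma,t)$, which you never extract — you only bound $R$ in absolute value. The paper obtains the cancellation by choosing as local model not the pure heat operator but the drifted operator $D\del_s^2-D(\sum_ik_i(\theta_0))\del_s$ of \eqref{consteq1r}, whose Green's function \eqref{EqFG1DRegPC} shifts the Gaussian by $tD\sum_ik_i(\theta_0)$: this shift exactly offsets the Jacobian's curvature term (the sum $\frac{Dt}{2}\sum_j\bigl(\sum_ik_i(\theta_0^j)\int_{B_j}d\theta-(n-1)\int_{B_j}H\,d\theta\bigr)$ vanishes in the limit), and at the same time it makes the remainder operator $\mathcal{R}_s=O(s)=O(\eta(t))$ rather than $O(1)$, which is what supplies the extra factor $\eta(t)$ in the error estimate \eqref{EqEstPart2CC} and hence the exponent $\tfrac32-2\delta$. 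To close your argument you would need to expand $R$ to first order in the curvature (equivalently, redo your half-space comparison with the tangent paraboloid or the drifted kernel) rather than deduce \eqref{eqNtregSpecC} formally from \eqref{resV}.
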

The proof of Lemma~\ref{LemRegDD} follows closely~\cite{BARDOS-2016}- taking into account this time that the local variable $s$ takes its positive values in $[0, \eta(t)]$ with $\eta(t)=\sqrt{t}t^{-\delta}$ instead of $[0,\eps]$ with $\eps=O(\sqrt{t})$, considered in~\cite{BARDOS-2016}. 

As shown in~\cite{CLARET-2024-1}, any open set in $\mathbb{R}^n$ can be approximated by a monotone sequence of regular domains that satisfy Assumption~\ref{Assump}. In the case where there exists a sequence of regular domains (satisfying Assumption~\ref{Assump}) that converges to a non-regular domain and satisfies~\eqref{EqConvMinkS} uniformly in $r$, we can refine Propositions~\ref{PropArbSC} and~\ref{PropPCMink} as follows:

\begin{proposition}\label{PropLimitCase}
	 Let $\Omega$ be a bounded admissible domain of $\R^n$ and $(\Omega_i)_{i\in \N}$ be a sequence of regular domains converging to $\Omega$ and satisfying Proposition~\ref{PropConvProp}, such that \eqref{EqConvMinkS} holds
 uniformly in $r$ on $]0,\eta(t)]$ ($\delta\in]0,\frac{1}{2}[$ is supposed to be fixed). Then for the limit domain $\Omega$ there exists $\beta\ge0$ such that  it holds the asymptotic expansion for $t\to 0+$
 \begin{equation}
	N (t)
=\int_{0}^{+\infty}\frac{e^{-z^2}}{\sqrt{\pi}} \mu(\del\Omega,\sqrt{4Dt}z)dz+
o(t^\beta).\quad \label{EqResVM}
\end{equation}

 If, in addition, the domain $\Omega$ has 
 a $d$-Minkowski measurable boundary $\del \Omega$, 
 then~\eqref{resV} takes the following form
\begin{equation}
	N (t)=\mathcal{M}^{d}(\del \Omega,\Omega)(4Dt)^\frac{n-d}{2}\int_{0}^{+\infty}z^{d}\frac{e^{-z^2}}{\sqrt{\pi}}\dz + o\left(\max(t^\beta,t^\frac{n-d}{2})\right).\label{resVExp2}
\end{equation}
\end{proposition}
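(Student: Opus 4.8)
The plan is to deduce \eqref{EqResVM} by transferring the $C^3$-formula \eqref{resV} of Lemma~\ref{LemRegDD} from the approximating domains $\Omega_i$ to the limit domain $\Omega$, and then to specialise to the Minkowski measurable case to obtain \eqref{resVExp2}. First I would apply Lemma~\ref{LemRegDD} to each regular domain $\Omega_i$: since each $\Omega_i$ is a bounded $C^3$-domain satisfying Assumption~\ref{Assump}, \eqref{resV} gives
\begin{equation*}
N_i(t)=\int_0^{+\infty}\frac{e^{-z^2}}{\sqrt{\pi}}\,\mu(\del\Omega_i,2\sqrt{Dt}\,z)\,dz+O\!\left(\sqrt{t}\,\mu(\del\Omega_i,\eta(t))\right),
\end{equation*}
with $\eta(t)=\sqrt{t}\,t^{-\delta}$. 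The key inputs are then: (i) by Proposition~\ref{PropC3conv} (together with Proposition~\ref{PropConvProp}) the heat contents converge, $N_i(t)\to N(t)$, uniformly in $t$ on every $]0,T[$; (ii) by hypothesis \eqref{EqConvMinkS} holds uniformly in $r\in]0,\eta(t)]$, i.e. $|\mu(\del\Omega_i,r)-\mu(\del\Omega,r)|\le\delta_i$ with $\delta_i\to0$; and (iii) the upper $d$-regularity of the boundary measures (preserved in the limit by Proposition~\ref{PropConvProp}) gives a uniform bound $\mu(\del\Omega_i,r)\le C r^{n-d}$ for small $r$.

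Next I would pass to the limit $i\to\infty$ inside the integral. On the range $z\in]0, z_*(t)[$ with $z_*(t):=\eta(t)/(2\sqrt{Dt})=\tfrac12 t^{-\delta}$, the uniform convergence \eqref{EqConvMinkS} together with the Gaussian weight $e^{-z^2}$ controls $\int_0^{z_*(t)}\frac{e^{-z^2}}{\sqrt\pi}|\mu(\del\Omega_i,2\sqrt{Dt}z)-\mu(\del\Omega,2\sqrt{Dt}z)|dz\le \delta_i\int_0^\infty\frac{e^{-z^2}}{\sqrt\pi}dz=\tfrac{\delta_i}{2}$; for the tail $z>z_*(t)$ both $\mu(\del\Omega_i,\cdot)$ and $\mu(\del\Omega,\cdot)$ are bounded by $\lambda_n(\Omega)$ (the ambient bounded domain $D$), so the tail integral is $O(e^{-t^{-2\delta}/4})$, exponentially small. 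Combining with (i) and the error term controlled by (iii) — namely $\sqrt t\,\mu(\del\Omega_i,\eta(t))=O(\sqrt t\,(t^{\frac12-\delta})^{n-d})=O(t^{1-\delta+(n-d)/2\cdot\text{(stuff)}})$, which is $o(t^\beta)$ for a suitable $\beta\ge0$ depending on $n$, $d$ and $\delta$ — I obtain \eqref{EqResVM} with that $\beta$. The choice of $\beta$ has to absorb both the rate of $\delta_i\to0$ (via the uniform $N_i\to N$) and the $C^3$-remainder rate, hence the statement only claims existence of some $\beta\ge0$.

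For \eqref{resVExp2}, I would insert the Minkowski measurability assumption \eqref{EqMainAss}, $\mu(\del\Omega,r)=\mathcal{M}^d(\del\Omega,\Omega)r^{n-d}+o(r^{n-d})$, into \eqref{EqResVM}. Writing $r=2\sqrt{Dt}\,z$ gives $\mu(\del\Omega,2\sqrt{Dt}z)=\mathcal{M}^d(\del\Omega,\Omega)(4Dt)^{(n-d)/2}z^{n-d}+o((4Dt)^{(n-d)/2}z^{n-d})$ uniformly for $z$ in compacts; integrating against $\frac{e^{-z^2}}{\sqrt\pi}$ and using $\int_0^\infty z^{n-d}\frac{e^{-z^2}}{\sqrt\pi}dz<\infty$ (here $z^d$ in the statement should read $z^{n-d}$, matching the exponent $n-d$) produces the main term $\mathcal{M}^d(\del\Omega,\Omega)(4Dt)^{(n-d)/2}\int_0^\infty z^{n-d}\frac{e^{-z^2}}{\sqrt\pi}dz$, while the $o(\cdot)$ contributions and the already-present $o(t^\beta)$ merge into $o(\max(t^\beta,t^{(n-d)/2}))$; one must check, as before, that extending the $z$-integral to $+\infty$ (rather than stopping at $z_*(t)$) costs only an exponentially small error because of the Gaussian.

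The main obstacle I anticipate is bookkeeping the interchange of the limit $i\to\infty$ with the $t\to0+$ asymptotics so that a single exponent $\beta$ works: the uniform-in-$r$ hypothesis \eqref{EqConvMinkS} is on $]0,\eta(t)]$ with $\eta(t)\to0$, so I must be careful that the approximation constant $\delta_i$ can be chosen independently of $t$ (which it is, by Proposition~\ref{PropConvProp}: $i_0=i_0(\delta)$ works for all $r>0$), and that the $C^3$-remainder $O(\sqrt t\,\mu(\del\Omega_i,\eta(t)))$ in \eqref{resV} is uniform in $i$ — this is where the uniform upper $d$-regularity constant $c_d$ from Assumption~\ref{Assump}, shared by all $\Omega_i$, is essential. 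A secondary subtlety is that $N_i(t)\to N(t)$ only uniformly on $]0,T[$ for finite $T$, so the passage to the limit must be done at fixed (small) $t$ before taking $t\to0+$; organising the estimates in that order resolves it.
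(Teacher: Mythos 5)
Your proposal is correct and follows essentially the same route as the paper: apply the $C^3$-formula \eqref{resV} to each approximating domain $\Omega_i$, pass to the limit using the uniform convergence of the heat contents (Proposition~\ref{PropC3conv}) and of the parallel volumes \eqref{EqConvMinkS}, and then insert \eqref{EqMainAss} to obtain the Minkowski measurable case. Your side observation that the integrand in \eqref{resVExp2} should read $z^{n-d}$ rather than $z^{d}$ is also correct.
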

\begin{proof}
For all $i\in \N$ we have (see formula~\eqref{resV})
\begin{equation}
	N_i (t)
=\int_{0}^{+\infty}\frac{e^{-z^2}}{\sqrt{\pi}} \mu(\del\Omega_i,\sqrt{4Dt}z)dz+
o_i(\mu(\del\Omega_i,\sqrt{t})).\quad \label{EqNtregDconti}
\end{equation}

 Thus, thanks to the assumptions,  we pass to the limit as $i\to +\infty$ in \eqref{EqNtregDconti}. By Proposition~\ref{PropC3conv}), $N_i (t)$ converges uniformly in $t$ to $N(t)$, the heat content associated with the limit domain $\Omega$. At the same time $\int_{0}^{+\infty}\frac{e^{-z^2}}{\sqrt{\pi}} \mu(\del\Omega_i,\sqrt{4Dt}z)dz$ also converges uniformly in $t$ to 
 $\int_{0}^{+\infty}\frac{e^{-z^2}}{\sqrt{\pi}} \mu(\del\Omega,\sqrt{4Dt}z)dz$. 

  The remainder term $o_i(\mu(\del\Omega_i,\sqrt{t}))$ is a continuous function of $t$ for all $i$ as the sum of continuous on $t$ functions:
  \begin{equation}\label{EqOi}
 	o_i(\mu(\del\Omega_i,\sqrt{t}))=N_i (t)-\int_{0}^{+\infty}\frac{e^{-z^2}}{\sqrt{\pi}} \mu(\del\Omega_i,\sqrt{4Dt}z)dz\to 0 \quad \hbox{for }t\to+\infty.
 \end{equation}
  Hence, for $i\to +\infty$ the limit function is also continuous on time function converging to $0$ for $t\to 0$, $i.e.$ in other words, it holds~\eqref{EqResVM} for a constant $\beta\ge 0$.  
 Thus the asymptotic expansion~\eqref{resV} holds also for the limit, possibly non-Lipschitz, domain $\Omega$. To obtain~\eqref{resVExp2} we use \eqref{EqExpmu1} and~\eqref{EqMainAss}.
\end{proof}
\begin{remark}
    For domains $\Omega\subset\R^n$ for which the inner parallel volume behaves as in \eqref{eq-LP1-general} (as expected for domains with piecewise self-similar boundaries), one obtains from \eqref{eq-LP2} that 
\begin{equation}\label{EqMuVonKoch}
	\mu(\del \Omega,\sqrt{4Dt}z)=G(\sqrt{4Dt}z)(4Dt)^{\frac{n-d}{2}}z^{n-d}+o(t^{\frac{n-d}{2}}) 
\end{equation}
as $t\to 0+$. Putting it in~\eqref{EqResVM} we find
\begin{equation*}
    N (t)
=(4Dt)^{\frac{n-d}{2}}\int_{0}^{+\infty}\frac{z^{n-d} e^{-z^2}}{\sqrt{\pi}} G(\sqrt{4Dt}z)dz+
o(\max(t^\beta,t^\frac{n-d}{2}).
\end{equation*}
\end{remark}
\begin{remark}
	By~\cite{CLARET-2024-1} any fractal boundary domain $\Omega$ can be approximated by a regular sequence of domains $(\Omega_i)_{i\in \N}$ in the sense of Assumption~\ref{Assump}.
	As the shape of the boundary of $\Omega$ in Proposition~\ref{PropLimitCase} is not explicitly known, we are not able to define the parameter $\beta$ of the remainder term in~\eqref{EqResVM}. From numerical experiences we could expect that it is actually $o(\mu(\del \Omega,\sqrt{t}))$. 
\end{remark}

%


\subsection{Heat content for the general case of $\Lambda=+\infty$ or a constant $\Lambda\in ]0,+\infty[$}\label{SecGeneralFrac}

In what follows, we consider the heat content associated with the solution of~\eqref{prb1}--\eqref{prb1end} with a constant $\Lambda$ (finite or not) for some irregular domain $\Omega$. We use~\eqref{EqEta} to define $\eta(t)$.
The method follows this approach: the expansion of heat content in terms of parallel volumes, known for the regular case, is used to approximate the heat content in the irregular case. 
More precisely, we consider a sequence of $C^3$-domains $(\Omega_i)_{i \in \mathbb{N}}$ that approximate the irregular domain $\Omega$, ensuring that the parallel volumes of $\Omega_i$ converge to those of the limit domain $\Omega$. 
We employ this convergence to pass to the limit in the heat content estimates, obtaining the corresponding estimate for the limit domain. 
We then refine the analysis by distinguishing between Minkowski measurable and non-measurable sets. 
Since parallel sets exhibit distinct asymptotic expansions, this leads to different asymptotic expansions of $N(t)$, up to the second term.
All findings refine and extend the results initially established in~\cite{BARDOS-2016}. First, we update the regular case, and then we address admissible domains, which were not considered in~\cite{BARDOS-2016}.

\subsubsection{Regular case}
We first
prove the asymptotic extension for the case of domains with regular boundary $\del \Omega\in
C^\infty$ or at least in $C^3$. We fix a regular shape and obtain a uniform estimate of $N(t)$ expressed on the volume of Minkowski sausage for $t\to 0+$.

We start to precise the results of~\cite{BARDOS-2016} by taking into account that the exponentially small error in the asymptotic expansion of the heat content is ensured outside of $\eta(t)$-neighborhood of $\del \Omega$ with $\eta(t)$ defined in \eqref{EqEta}. The difference to~\cite{BARDOS-2016}, that we don't clame that $t^{-\delta}\approx O(1)$ for sufficiently small $\delta>0$ and thus distingish the classes $O(\sqrt{t})$ and $O(\sqrt{t} t^{-\delta})$. 
Following~\cite{BARDOS-2016}, we fistly prove the following approximation of $N(t)$ by the the homogeneous solution of the one-dimensional problem:

\begin{theorem}\label{TH1}
Let $\Omega\subset \R^n$ be a bounded domain with a $C^3$-regular boundary.
Let 
\begin{equation*}
\hat{u}=\left\{\begin{array}{ll}
 \hat{u}_+,& 0<s<\eta(t),\\
\hat{u}_-,& -\eta(t)<s<0,
 \end{array}
 \right. 
\end{equation*}
be the solution of the one-dimensional problem
\begin{align}
 \frac{\del}{\del t}\hat{u}-D_\pm\frac{\del^2}{\del s^2} \hat{u}&=\mathcal{R}_s(s,\theta_0)\hat{u}, \quad -\eta(t)<s<\eta(t), \;\theta\equiv\theta_0,\label{opereqCH1d}\\
\hat{u}|_{t=0}&=\mathds{1}_{0<s<\eta(t)}(s),\nonumber\\
 D_- \frac{\del \hat{u}_-}{\del s}|_{s=-0}&=\Lambda(\hat{u}_--\hat{u}_+)|_{s=0}, \\
 D_+\frac{\del \hat{u}_+}{\del s}|_{s=+0}&=D_-\frac{\del \hat{u}_-}{\del s}|_{s=-0},\label{opereqCH1dN}
\end{align}
obtained from~(\ref{eq47})--(\ref{endsys}) by setting
$\theta\equiv\theta_0$. Here $\mathcal{R}_s(s,\theta_0)$ is given
by
\begin{equation}
	\mathcal{R}_s(s,\theta_0)= R(s,\theta_0)\frac{\del }{\del s}=-D_+
\left(\sum_{i=1}^{n-1}k_i(\theta_0)+s\sum_{i=1}^{n-1} \frac{k_i^2(\theta_0)}{1-sk_i(\theta_0)} \right)\frac{\del }{\del s}.\label{Restins}
\end{equation}

Then the heat content $N(t)$ of problem~\eqref{prb1}--\eqref{prb1end}, approximated
in~(\ref{nnloc}) by the solution of~(\ref{eq47})--(\ref{endsys}) in
$]-\eta(t),\eta(t)[\times \del \Omega$ in the local sense, satisfies
\begin{align}\label{Nuhat}
 N(t)-\int_{\del \Omega} \sH^{n-1}(d\sigma)&\int_{]0,\eta(t)[}\ds\;(1- \hat{u}(s,\sigma,t))|J(s,\sigma)|\\
 &=\left\{\begin{array}{ll}
 O(t^2\mu(\del \Omega,\eta(t))), & 0<\Lambda<\infty,\\ 
 O(t^\frac{3}{2}\mu(\del \Omega,\eta(t))), & \Lambda=\infty. \end{array}
 \right.
\end{align}
If all principal curvatures of $\del \Omega$ are constant, then
$$N(t)=\int_{\del \Omega}\sH^{n-1}(d
\sigma)\int_{]0,\eta(t)[}\ds\;
(1-\hat{u}(s,\sigma,t))|J(s,\sigma)|+O(e^{-\frac{1}{t^{\delta_0}}}).$$
 
Moreover, if $\hat{u}^{hom}(s,t)$ is the solution of the homogeneous
constant coefficients problem
\begin{align}
\del_t \hat{u}^{hom}-D_\pm\frac{\del^2}{\del s^2}\hat{u}^{hom}&=0, \quad -\eta(t)<s<\eta(t), 
\label{homp01}\\
\hat{u}^{hom}|_{t=0}&=\mathds{1}_{0<s<\eta(t)}(s),\nonumber\\
 D_- \frac{\del \hat{u}_{-}^{hom}}{\del s}|_{s=-0}&=\Lambda (\hat{u}_-^{hom}-\hat{u}_+^{hom})|_{s=0}, \\
 D_+\frac{\del \hat{u}_+^{hom}}{\del s}|_{s=+0}&=D_-\frac{\del \hat{u}_-^{hom}}{\del s}|_{s=-0},\label{homp0e}
\end{align} 
then
\begin{align} 
 \notag N(t)-\int_{\del \Omega} \sH^{n-1}(d\sigma)&\int_{]0,\eta(t)[}\ds\; 
(1-\hat{u}^{hom}(s,t))|J(s,\sigma)|\\ &=\left\{\begin{array}{ll}
 O(t \mu(\del \Omega,\eta(t))), & 0<\Lambda<\infty,\\ 
 O(\sqrt{t}\mu(\del \Omega,\eta(t))), & \Lambda=\infty. 
\end{array}
 \right.\label{NOt2h}
\end{align}
\end{theorem}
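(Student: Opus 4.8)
The plan is to follow the parametrix method of~\cite{BARDOS-2016}; the only genuinely new point is that the localization radius is now $\eta(t)=\sqrt t\,t^{-\delta}$ (with $\delta\in]0,\tfrac{1}{2}[$ fixed) rather than $\eps=O(\sqrt t)$, so that all error bounds must be kept in the explicit form $(\text{power of }t)\cdot\mu(\del\Omega,\eta(t))$ instead of being collapsed into a single power of $t$. Throughout, $\del\Omega$ being $C^3$ and compact has positive reach $\eta_0$, and we only consider $\eta(t)<\eta_0$.

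I would start from the localized representation~\eqref{nnloc}: up to $O(e^{-1/t^{\delta_0}})$ one has $N(t)=\int_{\del\Omega}\sH^{n-1}(d\sigma)\int_0^{\eta(t)}(1-u(s,\sigma,t))|J(s,\sigma)|\,ds$, where on each patch $B_{m,\eta(t)}$ the function $u$ solves the local system~\eqref{eq47}--\eqref{endsys}, the passage from the global problem to the patches being justified with an exponentially small error by~\cite[Prop.~3.1]{BARDOS-2016} and the Varadhan bound on the Green function. Subtracting the analogous integral of $\hat u$, the left-hand side of~\eqref{Nuhat} equals $\int_{\del\Omega}\sH^{n-1}(d\sigma)\int_0^{\eta(t)}\bigl(\hat u-u\bigr)(s,\sigma,t)\,|J(s,\sigma)|\,ds$ up to an exponentially small term. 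The constant-curvature assertion is then immediate: when all principal curvatures of $\del\Omega$ are constant, the coefficients of~\eqref{eq47}--\eqref{endsys} as well as the initial and interface data are independent of $\theta$, so by uniqueness $u=\hat u$ and only the localization error survives.

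Next I would analyse $w:=u-\hat u$ in the general $C^3$ case. Since $\hat u$ depends on the tangential variable only through the frozen value $\theta_0=\theta_m$ and solves~\eqref{opereqCH1d}--\eqref{opereqCH1dN} with exactly the transmission conditions of the full system, $w$ solves inside each patch a parabolic transmission problem with zero initial data, homogeneous interface conditions, and a source consisting precisely of the terms by which the full operator~\eqref{eq47} differs from the frozen one-dimensional one: the tangential terms $D_\pm\sum_j\del_{\theta_j}^2 u$, $D_\pm\tfrac{sk_j}{1-sk_j}\bigl(1+\tfrac{1}{1-sk_j}\bigr)\del_{\theta_j}^2 u$ and $\tfrac{D_\pm}{|J|}\sum_j\del_{\theta_j}\!\bigl(\tfrac{|J|}{(1-sk_j)^2}\bigr)\del_{\theta_j}u$, together with $\bigl(R(s,\theta)-R(s,\theta_0)\bigr)\del_s\hat u$ coming from the $\theta$-dependence of the coefficient in~\eqref{Restins}. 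On $B_{m,\eta(t)}$ we have $0<s<\eta(t)$, $|\theta-\theta_0|\le C\eta(t)$, and the $C^3$-regularity of $\del\Omega$ gives $R(s,\theta)-R(s,\theta_0)=O(\eta(t))$ and $\del_{\theta_j}\bigl(\tfrac{|J|}{(1-sk_j)^2}\bigr)=O(s)$; combining these smallness factors with the near-boundary parabolic bounds for $u$, $\del_s u$, and the first and second tangential derivatives of $u$ established in~\cite{BARDOS-2016}---for which, crucially, both $1-u$ and $\int_0^{\eta(t)}|\del_s u|\,ds$ are $O(1)$ when $\Lambda=\infty$ but one power of $\sqrt t$ smaller when $0<\Lambda<\infty$---and applying Duhamel's formula against the one-dimensional heat semigroup (with the transmission condition), one bounds the above integral, exactly as in~\cite{BARDOS-2016}, by $O\bigl(t^{3/2}\mu(\del\Omega,\eta(t))\bigr)$ for $\Lambda=\infty$ and by $O\bigl(t^{2}\mu(\del\Omega,\eta(t))\bigr)$ for $0<\Lambda<\infty$; here one uses~\eqref{EqAppJFliss}, i.e.\ $\int_{\del\Omega}\sH^{n-1}(d\sigma)\int_0^{\eta(t)}|J(s,\sigma)|\,ds=\mu(\del\Omega,\eta(t))$. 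This proves~\eqref{Nuhat}.

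Finally, for~\eqref{NOt2h} I would compare $\hat u$ with $\hat u^{hom}$: the difference $v:=\hat u-\hat u^{hom}$ solves the one-dimensional heat equation with zero initial data, the same interface conditions, and source $\mathcal{R}_s(s,\theta_0)\hat u=R(s,\theta_0)\del_s\hat u$; since $R(s,\theta_0)=-D_+\sum_i k_i(\theta_0)+O(s)=O(1)$ uniformly on $\del\Omega\times[0,\eta(t)]$ and $\del_s\hat u$ obeys the same near-boundary bounds as above, Duhamel yields a bound of size $O(\sqrt t)$ for $\Lambda=\infty$ (respectively $O(t)$ for $0<\Lambda<\infty$), and multiplying by $\mu(\del\Omega,\eta(t))$ via~\eqref{EqAppJFliss} gives~\eqref{NOt2h}. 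The main obstacle is this class of estimates: reproducing, with $\eta(t)=\sqrt t\,t^{-\delta}$ in place of $\eps$, the delicate near-boundary parabolic estimates of~\cite{BARDOS-2016} for $u$, $\del_s u$ and $\del_\theta u$ that respect the transmission condition and keep the dependence on $t$, $\delta$ and $\eta(t)$ completely explicit, and in particular tracking the single extra power of $\sqrt t$ that is present only in the finite-$\Lambda$ regime and is responsible for the better exponents there.
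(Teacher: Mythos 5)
Your overall strategy---localize via \eqref{nnloc}, then compare the local solution $u$ of \eqref{eq47}--\eqref{endsys} with the frozen one-dimensional solution $\hat u$ and then with $\hat u^{hom}$ by a Duhamel/parametrix argument, keeping every error in the explicit form $(\text{power of }t)\cdot\mu(\del\Omega,\eta(t))$---is exactly the paper's (its proof consists of redoing \cite[Theorem~5.1]{BARDOS-2016} with $\eps$ replaced by $\eta(t)$ and re-deriving the remainder bounds \eqref{NNj}), and your handling of the constant-curvature case and of the step from $\hat u$ to $\hat u^{hom}$ is sound. But two steps of your execution would not deliver the stated exponents. First, you place the full tangential Laplacian $D_\pm\sum_j\del_{\theta_j}^2u$ into the source for $w=u-\hat u$ and propose to control it by ``near-boundary parabolic bounds for \dots the first and second tangential derivatives of $u$''. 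That term carries no small coefficient, and second-order tangential estimates of the required precision are not established in \cite{BARDOS-2016}. The actual mechanism (see \eqref{GRT} in the appendix and the corresponding step in \cite{BARDOS-2016}) is an exact cancellation: the tangential Laplacian is kept inside the leading parametrix operator, whose tangential factor $K(\theta,\theta_1,Dt)$ is a probability kernel on the closed boundary, so all divergence-form tangential contributions (both $\Delta_\theta$ and the $O(s)$-coefficient terms of $\mathcal{R}_\theta$) integrate to zero over $\del\Omega$ and never need to be estimated. Without invoking this cancellation your decomposition leaves an uncontrolled term.

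Second, your bound $R(s,\theta)-R(s,\theta_0)=O(\eta(t))$, obtained from the patch diameter $|\theta-\theta_0|\le C\eta(t)$, is too crude: fed into Duhamel it yields at best $O\bigl(t\,\mu(\del\Omega,\eta(t))\bigr)$ when $\Lambda=\infty$, not the claimed $O\bigl(t^{3/2}\mu(\del\Omega,\eta(t))\bigr)$. In the paper the freezing point is the integration variable $\sigma$ itself (each $\hat u(\cdot,\sigma,\cdot)$ has coefficients frozen at $\sigma$), so the freezing error is governed not by $\eta(t)$ but by the tangential Gaussian, which localizes $|\theta_1-\theta_0|$ to $O(\sqrt{t})$; moreover the first-order Taylor term $\nabla k_i(\theta_0)\cdot 2\sqrt{D(t-\tau)}\,\tilde\theta_1$ integrates to zero against the symmetric Gaussian, leaving an error $O(t-\tau)=O(t)$ per Duhamel iteration. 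It is this extra full power of $t$, rather than $t^{1/2-\delta}$, that produces the exponents in \eqref{Nuhat}. Both repairs are obtained by following the explicit parametrix computation of \cite[Theorem~5.1]{BARDOS-2016} (reproduced for $D_+=D_-$ in the appendix) instead of a pointwise comparison of $u$ and $\hat u$.
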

\begin{proof}
	The proof follows closely the initial proof of~\cite[Theorem~5.1]{BARDOS-2016}, denoting $\eps$ by $\eta(t)$. This time in our framework (to compare to (5.35) in~\cite{BARDOS-2016}), 
	we have that for all $j\ge 1$ there are some constants $C_j>0$ 
	\begin{equation}\label{NNj}
 |NN^j(t)|\le C_j\left\{\begin{array}{ll}
 t (\eta(t))^{j-1}\mu(\del \Omega,\eta(t)), & \quad 0<\Lambda<\infty,\\ 
 t^{\frac{1}{2}}(\eta(t))^{j-1}\mu(\del \Omega,\eta(t)), & \quad \Lambda=\infty, \end{array}
 \right.
\end{equation}
where $NN^j(t)$ denotes the remainder term number $j$ of the corresponding Green function parametrix expansion under the operator $\mathcal{R}_s$ (see Eq.~(5.34) in~\cite{BARDOS-2016}).
		All proof details are straightforward and omitted. 
\end{proof}

\begin{theorem}
Let $\Omega \subset\R^n$ be a connected bounded $C^3$-domain 
 and $N(t)$ be the heat content associated to the solution
$u$ of system~(\ref{prb1})--(\ref{prb1end}). Let $\delta\in]0,\frac{1}{2}[$ be fixed.
Then for $t\to 0+$, and $\eta(t)=2\sqrt{D_+}t^{\frac{1}{2}-\delta}$, the heat content has the following asymptotic form:
\begin{enumerate}
 \item for $\Lambda<\infty$ on $\del \Omega$:
\begin{eqnarray}
 N(t)&=&\frac{2\sqrt{t}\Lambda}{\sqrt{D_+}} 
\left[\mu(\del \Omega,\eta(t))\int_{t^{-\delta}}^{2t^{-\delta}}\dz f(z,t) \right.\nonumber\\
 &&\left. 
 - \int_{t^{-\delta}}^{2t^{-\delta}} \dz \mu\left(\del \Omega,\sqrt{4D_+t}(z-t^{-\delta})\right) f(z,t)
 \right.\nonumber\\
&&\left.-
 \int_0^{t^{-\delta}} \dz \mu\left(\del \Omega,\sqrt{4D_+t}z\right) f(z,t) \right]+O(t\mu(\del \Omega,\eta(t))),\label{EqNilocFint2C}
\end{eqnarray}
where \begin{equation}\label{EqOo}
O(t\mu(\del \Omega,\eta(t)))=o(t),
 	 \end{equation}
 $\alpha=\frac{1}{\sqrt{D_-}}+\frac{1}{\sqrt{D_+}}$ and
\begin{equation}\label{eqf}
f(z,t)= \exp\left(2\Lambda \alpha \sqrt{t}z+\Lambda^2\alpha^2t\right) 
\operatorname{erfc}(z+\Lambda\alpha\sqrt{t}) 
\end{equation}where $\operatorname{erfc}=1-
\operatorname{erf}$ is the Gauss complementary error function;
\item for $\Lambda=\infty$ on $\del \Omega$, 
\begin{equation}
 N(t)=\tfrac{2\sqrt{D_-}}{\sqrt{D_-}+\sqrt{D_+}}\left(\int_0^{+\infty}\frac{e^{-z^2}}{\sqrt{\pi}}\mu(\del \Omega,\sqrt{4D_+t} z) \dz \right)
+O(\sqrt{t}~\mu(\del \Omega,\eta(t))),\label{EqInfNtreg}
 \end{equation}
 where $O(\sqrt{t}~\mu(\del \Omega,\eta(t)))=o(t^\frac{1}{2})$.
\end{enumerate}
 Formulas~(\ref{EqNilocFint2C}) and~(\ref{EqInfNtreg}) can be
 approximated
 \begin{enumerate}
 \item for $\Lambda<\infty$ on $\del \Omega$ by
\begin{equation}\label{NFint2Ca}
	N(t)=4t\Lambda \mathcal{H}^{n-1}(\del \Omega)
\left(2 t^{-\delta}\int_{t^{-\delta}}^{2t^{-\delta}} \hspace{-3mm}f(z,t) \dz - \int_{0}^{2t^{-\delta}} \hspace{-3mm}zf(z,t) \dz\right)+o(t),
\end{equation}
\item and, for $\Lambda=\infty$ on $\del \Omega$, by
\begin{equation}
 N(t)=\frac{2\sqrt{D_-D_+t}~ }{\sqrt{\pi}(\sqrt{D_-}+\sqrt{D_+})} \left[\mathcal{H}^{n-1}(\del \Omega) (1-e^{-t^{-2\delta}})\right]+o(\sqrt{t}).\label{NFint2InfCa}
\end{equation}
 \end{enumerate}
\end{theorem}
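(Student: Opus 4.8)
The plan is to feed an explicit solution of the one‑dimensional homogeneous transmission problem into the localization formula of Theorem~\ref{TH1}, and then to convert the resulting collar integral of the $1$D profile into an integral against the interior parallel volume $\mu(\del\Omega,\cdot)$. \emph{Step 1 (localization).} Since $\del\Omega\in C^3$ it has positive reach $\eta_0$, and for $t$ small $\eta(t)=2\sqrt{D_+}\,t^{1/2-\delta}<\eta_0$, so the normal coordinates $(s,\sigma)$ of Section~\ref{SubsReg} are available on the $\eta(t)$‑collar. Formula~\eqref{NOt2h} then gives
\begin{equation*}
N(t)=\int_{\del\Omega}\mathcal{H}^{n-1}(d\sigma)\int_0^{\eta(t)}\bigl(1-\hat{u}^{hom}(s,t)\bigr)\,|J(s,\sigma)|\,\ds+\begin{cases}O(t\,\mu(\del\Omega,\eta(t))),&0<\Lambda<\infty,\\ O(\sqrt t\,\mu(\del\Omega,\eta(t))),&\Lambda=\infty.\end{cases}
\end{equation*}
As $\mu(\del\Omega,\eta(t))=O(\eta(t))=O(t^{1/2-\delta})\to0$, the error is $o(t)$ resp.\ $o(\sqrt t)$, which is the content of~\eqref{EqOo} and of the remainder claims in~\eqref{EqInfNtreg}.

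\emph{Step 2 (the $1$D solution).} I would solve~\eqref{homp01}--\eqref{homp0e} by a Laplace transform in $t$, noting first that on the relevant range $0<s<\eta(t)$ the bump datum $\mathds 1_{0<s<\eta(t)}$ may be replaced by the step $\mathds 1_{s>0}$ up to an exponentially small error, since heat does not reach the far boundary $s=\eta(t)$. Inverting the transform (the two $e^{-s^2/(4D_+t)}/\sqrt t$ contributions cancel) yields, with $z=s/(2\sqrt{D_+t})$,
\begin{equation*}
1-\hat{u}^{hom}(s,t)=\frac{\sqrt{D_-}}{\sqrt{D_-}+\sqrt{D_+}}\bigl(\operatorname{erfc}(z)-f(z,t)\bigr),\qquad \partial_s\hat{u}^{hom}(s,t)=\frac{\Lambda}{D_+}\,f(z,t),
\end{equation*}
with $f$ as in~\eqref{eqf}; when $\Lambda=\infty$ the $f$‑term vanishes, so $1-\hat{u}^{hom}=\tfrac{\sqrt{D_-}}{\sqrt{D_-}+\sqrt{D_+}}\operatorname{erfc}(z)$ and $\partial_s\hat{u}^{hom}=\tfrac{\sqrt{D_-}}{\sqrt{D_-}+\sqrt{D_+}}(\pi D_+t)^{-1/2}e^{-z^2}$. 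The second identity — that differentiating the profile in $s$ reproduces precisely the weight $f$ (resp.\ the Gaussian) — is what makes Step~3 close.

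\emph{Step 3 (from the profile to $\mu$, then to the approximate formulas).} Positive reach of $\del\Omega$ gives $\frac{d}{ds}\mu(\del\Omega,s)=\int_{\del\Omega}|J(s,\sigma)|\,\mathcal{H}^{n-1}(d\sigma)$ for $0<s<\eta_0$ (the differentiated form of~\eqref{EqAppJFliss}). Inserting this into the collar integral and integrating by parts in $s$, using $\mu(\del\Omega,0)=0$ and the exponential smallness of $1-\hat{u}^{hom}(\eta(t),t)$, moves the $s$‑derivative onto $\hat{u}^{hom}$; after the substitution $s=\sqrt{4D_+t}\,z$ this produces~\eqref{EqInfNtreg} when $\Lambda=\infty$, and via $\partial_s\hat{u}^{hom}=\tfrac{\Lambda}{D_+}f$ a formula of the type~\eqref{EqNilocFint2C} when $0<\Lambda<\infty$, the terms there supported near $z\in[t^{-\delta},2t^{-\delta}]$ being the boundary contributions at $s=\eta(t)$, which are exponentially small. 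Finally, since $\del\Omega\in C^3$ is $(n-1)$‑Minkowski measurable, $\mu(\del\Omega,r)=\mathcal{H}^{n-1}(\del\Omega)\,r+o(r)$ as $r\to0+$ while $\mu(\del\Omega,r)\le\lambda_n(\Omega)$ for all $r$; substituting this, extending the $z$‑integrals to $+\infty$ at the cost of an exponentially small term, and using $\int_0^{t^{-\delta}}z e^{-z^2}\,\dz=\tfrac12(1-e^{-t^{-2\delta}})$, one gets~\eqref{NFint2InfCa}, and the analogous bookkeeping with the weight $zf(z,t)$ gives~\eqref{NFint2Ca}. The contribution of the $o(r)$‑remainder of $\mu$, integrated against the rapidly decaying weights, is $o(\sqrt t)$ resp.\ $o(t)$: split the $z$‑integral at the threshold where $|\mu(\del\Omega,r)-\mathcal{H}^{n-1}(\del\Omega)r|\le\varepsilon r$ takes over from the crude bound $\lambda_n(\Omega)$, beyond which the Gaussian/erfc weight is already super‑exponentially small.

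\emph{Main obstacle.} The delicate part is Steps~2--3 taken together: producing the closed‑form $1$D solution \emph{on the finite collar} $]-\eta(t),\eta(t)[$ with rigorous control that the pieces concentrated near $s=\eta(t)$ (and all boundary terms in the integration by parts) are genuinely exponentially small, and checking that transferring the $s$‑derivative onto $\hat{u}^{hom}$ reproduces exactly the $\mu$‑weighted integrals of~\eqref{EqNilocFint2C} and~\eqref{EqInfNtreg} with the stated orders. For $0<\Lambda<\infty$ there is the additional feature that the ``coefficient of $t$'' in~\eqref{NFint2Ca} is itself mildly $t$‑dependent through $f(z,t)$, so the statement must carry a genuine $o(t)$ remainder rather than a fixed constant; one should also verify consistency with the two–term expansion of Theorem~\ref{ThFinalr} in the overlap of the two cut‑off regimes.
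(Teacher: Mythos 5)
Your proposal is correct and rests on the same skeleton as the paper's proof (the localization of Theorem~\ref{TH1}/\eqref{NOt2h} plus the explicit one-dimensional Green's function of the transmission problem on the half-line), but the final computation is organized differently, and I think more cleanly. The paper substitutes the two pieces $h_+$ and $f_+$ of the 1D Green's function into the double $(s,s_1)$-integral and works out the support of $\mathds{1}_{]0,\eta(t)[}(s)\mathds{1}_{]0,\eta(t)[}(-s+v)$ via \eqref{zeta}, which is what produces the three-term structure of \eqref{EqNilocFint2C} (two of whose terms, supported on $z\in[t^{-\delta},2t^{-\delta}]$, are super-exponentially small). You instead integrate the profile once to get the closed form $1-\hat{u}^{hom}(s,t)=\frac{\sqrt{D_-}}{\sqrt{D_-}+\sqrt{D_+}}\left(\operatorname{erfc}(z)-f(z,t)\right)$ with $z=s/(2\sqrt{D_+t})$ — which I checked is exactly what $\int_0^\infty(h_+-f_+)\,\ds_1$ gives in both the finite-$\Lambda$ and $\Lambda=\infty$ cases, via the identity $\int_z^{\infty}f(w,t)\,{\rm d}w=\frac{1}{2\Lambda\alpha\sqrt{t}}(\operatorname{erfc}(z)-f(z,t))$ — and then a single integration by parts in $s$ against $\frac{d}{ds}\mu(\del\Omega,s)=\int_{\del\Omega}|J(s,\sigma)|\,\mathcal{H}^{n-1}(d\sigma)$ lands directly on the $\mu$-weighted integrals, with all boundary terms exponentially small. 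This buys you a shorter bookkeeping and makes the passage to \eqref{NFint2Ca} and \eqref{NFint2InfCa} via \eqref{EqAppJFliss} immediate. Two small remarks: (i) your phrase "the two Gaussian contributions cancel" is loose — for $0<\Lambda<\infty$ the reflected Gaussian has amplitude $a(\Lambda)=1$ and the $h$-part integrates to exactly $1$ on the half-line rather than cancelling, so the whole profile comes from the $f_+$-part; the stated closed form is nonetheless correct. (ii) Your route yields $N(t)=\frac{2\Lambda\sqrt{t}}{\sqrt{D_+}}\int_0^{t^{-\delta}}\mu(\del\Omega,\sqrt{4D_+t}z)f(z,t)\,\dz+o(t)$ with a \emph{plus} sign on the dominant term, which agrees with the paper's own intermediate computation of $-Nf_{\eta(t)}(t)$ and with the positivity of $N(t)$; the minus sign in front of $\int_0^{t^{-\delta}}$ in the displayed formula \eqref{EqNilocFint2C} (and correspondingly in \eqref{NFint2Ca}) appears to be a sign slip in the statement, so do not be alarmed that your derivation disagrees with the theorem as printed on this point.
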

\begin{proof}
Thanks to~\eqref{NOt2h},	$N(t)$ becomes 
\begin{align*}
 N(t)-\mu(\del \Omega,\eta(t)) &+Nh_{\eta(t)}(t)+Nf_{\eta(t)}(t)\\
 &=\left\{\begin{array}{ll}
 O(t~\mu(\del \Omega,\eta(t))), & \quad 0<\Lambda<\infty,\\ 
 O(\sqrt{t}~\mu(\del \Omega,\eta(t))), & \quad \Lambda=\infty. \end{array}
 \right.
\end{align*}
where
\begin{equation}
Nh_{\eta(t)}(t)= \int_{]0,\eta(t)[^2}d s_1 \ds \; h_+(s,s_1,t)\int_{\del \Omega} \sH^{n-1}(d\sigma) |J(s,\sigma)|
\end{equation}
and 
\begin{equation}
Nf_{m,\eta(t)}(t)=- \int_{]0,\eta(t)[^2}d s_1 \ds \; f_+(s,s_1,t)\int_{\del \Omega} \sH^{n-1}(d\sigma) |J(s,\sigma)|.
\end{equation}
 Here $h_+$ and $f_+$ are the following functions involved in the definition of the one-dimensional Green function on the positive half space $s_1>0$ and $s_2>0$ (see Appendix B on~\cite{BARDOS-2016}):
\begin{align}
h_+(s_1,s_2,t)&= \tfrac{1}{\sqrt{4\pi D_+t}}\left(\exp\left(-\tfrac{(s_1-s_2)^2}{4D_+t}\right) 
+a(\Lambda) \exp\left(-\tfrac{(s_1+s_2)^2}{4D_+t}\right) \right),\label{h}\\
f_+(s_1,s_2,t)&=b(\Lambda)\tfrac{\Lambda}{D_+}
\exp\left(\tfrac{\Lambda \alpha}{\sqrt{D_+}}(s_1+s_2)+ 
\Lambda^2 \alpha^2 t\right)
\cdot \operatorname{erfc}\left(\tfrac{s_1+s_2}{2\sqrt{D_+t}}+\Lambda \alpha\sqrt{ t} \right),\label{fp}
\end{align}
where 
\begin{align*}
 a(\Lambda)&=\left\{\begin{array}{ll}
 1,&\Lambda<\infty \\
 \frac{\sqrt{D_+}-\sqrt{D_-}}{\sqrt{D_+}+\sqrt{D_-} },&\Lambda=\infty 
 \end{array}
 \right.
 &\text{ and } &&
 b(\Lambda)&=\left\{\begin{array}{ll}
 1,& \quad \Lambda<\infty\\
 0,& \quad \Lambda=\infty
 \end{array}
 \right..
\end{align*}
Let us calculate it explicitly. We notice that $Nf_{\eta(t)}(t)$ is equal to zero for $\Lambda=\infty$. We start with the part $Nh_{\eta(t)}(t)$. As it was detailed in~\cite[Theorem~6.1]{BARDOS-2016} (just by taking $\eps=\eta(t)$) we result in
\begin{equation*}
 Nh_{\eta(t)}(t)=\mu(\del \Omega,\eta(t))-(1-a(\Lambda))\left[\int_{0}^{t^{-\delta}}\frac{e^{-z^2}}{\sqrt{\pi}} \mu(\del \Omega,\sqrt{4D_+t}z)\dz\right].
\end{equation*}
We notice that the integral over $[0,t^{-\delta}]$ in the previous expression can be replaced by the integral over $[0,+\infty[$ with  exponentially small error.

Thus, for $\Lambda<\infty$, $Nh_{\eta(t)}(t)=\mu(\del \Omega,\eta(t))$, since $a(\Lambda)=1$. 
 For the function $f$
from equation (\ref{eqf}), we find that
\begin{align*}
Nf_{\eta(t)}(t)&=- \frac{2\Lambda\sqrt{t}}{\sqrt{D_+}}\int_{\R^2}\ds \dz 
\mathds{1}_{]0,\eta(t)[}(s)\mathds{1}_{]0,\eta(t)[}(-s+2\sqrt{D_+t}z)
 f(z,t)\\ 
 &\qquad\cdot\int_{\del \Omega}\sH^{n-1}(d\sigma) |J(s,\sigma)|\\
&=-\frac{2\Lambda\sqrt{t}}{\sqrt{D_+}}\left[\int_{t^{-\delta}}^{2t^{-\delta}} \dz f(z,t) 
\int_{\del \Omega}\int_{(z-t^{-\delta})\sqrt{4D_+t}}^{\sqrt{4D_+t}t^{-\delta}}|J(s,\sigma)|\ds \sH^{n-1}(d\sigma) \right.\\
&\hspace{1.8cm}\left.+ \int_0^{t^{-\delta}} \dz f(z,t) \int_{\del \Omega}\int_{0}^{\sqrt{4D_+t}z}|J(s,\sigma)|\ds \sH^{n-1}(d\sigma) \right] 
\end{align*}
and  hence 
\begin{align*}
&Nf_{\eta(t)}(t)=-\frac{2\Lambda\sqrt{t}}{\sqrt{D_+}}\left[ \mu(\del \Omega,\eta(t))\int_{t^{-\delta}}^{2t^{-\delta}} 
f(z,t)\dz \right. \\
& \left. -\int_{t^{-\delta}}^{2t^{-\delta}} f(z,t) \mu(\del \Omega,\sqrt{4D_+t}(z-t^{-\delta}))\dz 
+ \int_{0}^{t^{-\delta}} f(z,t) \mu(\del \Omega,\sqrt{4D_+t}z)\dz \right] .
\end{align*}
Here, we have used~\cite[Eq.~(5.38)]{BARDOS-2016} with $v=2\sqrt{D_+t}z$ and 
\begin{equation}
	\mathds{1}_{]0,\eta(t)[}(s)\mathds{1}_{]0,\eta(t)[}(-s+v)\ne 0 \Leftrightarrow  \left\{\begin{array}{ll}
 0<v<\eta(t),&\;0<s<v,\\
 \eta(t)<v<2\eta(t),&v-\eta(t)<s<\eta(t).
 \end{array} \right.\label{zeta}
 \end{equation}

Putting two results together, 
 we obtain formulas~\eqref{EqNilocFint2C} and~\eqref{EqInfNtreg}.
We also notice that it holds~\eqref{EqOo} by the definitions of $\eta(t)$ and $\delta$.
Finally, we use approximation of the volume of the parallel set~\eqref{EqAppJFliss} to obtain from~\eqref{EqNilocFint2C} and~\eqref{EqInfNtreg} formulas~\eqref{NFint2Ca} and~\eqref{NFint2InfCa} respectively.
\end{proof}

\subsubsection{On admissible domains}\label{sssArbC}
 
\begin{proposition}\label{PropAbrCG}
	Let $\delta\in]0,\frac{1}{2}[$ be fixed. Let $\Omega$ be a bounded admissible domain of $\R^n$ and $(\Omega_i)_{i\in \N}$ be a sequence of regular domains converging to $\Omega$ and satisfying Proposition~\ref{PropConvProp}, such that \eqref{EqConvMinkS} holds %
 uniformly in $r$ on $]0,\eta(t)]$. Then for the limit domain $\Omega$ there exist $\beta_1\ge0$ and $\beta_2\ge 0$ such that  for $t\to 0+$ it hold the asymptotic expansions~\eqref{EqNilocFint2C} and~\eqref{EqInfNtreg}, for $\Lambda$ finite 
 and infinite, with the remainder terms replaced by $o(t^{\beta_1})$ and $o(t^{\beta_2})$ respectively.
 \end{proposition}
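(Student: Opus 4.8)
The plan is to reduce the claim to the already-established $C^3$-expansions \eqref{EqNilocFint2C} and \eqref{EqInfNtreg} by a limiting argument, exactly in the spirit of the proof of Proposition~\ref{PropLimitCase}, but now pushing the full two-term parametrix formulas through the limit $i\to\infty$. First I would record, for each $i\in\N$ and $\eta(t)=2\sqrt{D_+}\,t^{1/2-\delta}$, the $C^3$-identity $N_i(t)=\mathrm{RHS}_i(t)+r_i(t)$ for $\Omega_i$, where $\mathrm{RHS}_i(t)$ is the explicit right-hand side of \eqref{EqNilocFint2C} (resp.\ \eqref{EqInfNtreg}) with every occurrence of $\mu(\del\Omega,\cdot)$ replaced by $\mu(\del\Omega_i,\cdot)$, and $r_i(t)=O(t\,\mu(\del\Omega_i,\eta(t)))$ (resp.\ $O(\sqrt{t}\,\mu(\del\Omega_i,\eta(t)))$). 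I would note that $N_i$ is continuous in $t>0$ by well-posedness of \eqref{prb1}--\eqref{prb1end}, and that $\mathrm{RHS}_i$ is continuous because $r\mapsto\mu(\del\Omega_i,r)$ is continuous for a $C^3$-domain (absolute continuity of the inner parallel-volume function, \cite[Cor.~2.5]{RW10}) while the kernels $f(z,t)$ and $e^{-z^2}/\sqrt{\pi}$ are continuous; hence $r_i=N_i-\mathrm{RHS}_i$ is continuous as well.

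Next I would pass to the limit $i\to\infty$. The uniform-in-$t$ convergence $N_i(t)\to N(t)$ on $]0,T[$ is Proposition~\ref{PropC3conv} ($L^2$-Mosco convergence of the associated forms together with positivity of the heat solutions), where $N(t)$ is the heat content of the admissible limit domain $\Omega$. For $\mathrm{RHS}_i(t)\to\mathrm{RHS}(t)$ uniformly in $t$ I would invoke three uniform bounds: first, that the hypothesis together with Proposition~\ref{PropConvProp} yields $\sup_{r>0}|\mu(\del\Omega_i,r)-\mu(\del\Omega,r)|\to0$ as $i\to\infty$; second, $0\le\mu(\del\Omega_i,r)\le\lambda_n(\Omega)$ (and likewise for $\Omega$); third, $0\le f(z,t)\le e^{-z^2}$ for $z\ge0$, which is just $\operatorname{erfc}(x)\le e^{-x^2}$ ($x\ge0$) applied at $x=z+\Lambda\alpha\sqrt{t}$. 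These bounds make every $z$-integral in \eqref{EqNilocFint2C} and \eqref{EqInfNtreg} dominated uniformly in $t$, so the $t$-dependent limits $t^{-\delta},2t^{-\delta}$ (and the unbounded integral in the $\Lambda=\infty$ case) cause no difficulty, and each summand of $\mathrm{RHS}_i$ converges uniformly in $t$ to the corresponding summand of $\mathrm{RHS}$. Consequently $r_i(t)=N_i(t)-\mathrm{RHS}_i(t)\to R(t):=N(t)-\mathrm{RHS}(t)$ uniformly on $]0,T[$, so $R$ is continuous on $]0,\infty[$.

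Finally I would identify the order of $R$ near $0$. As $t\to0+$ one has $N(t)\to0$, since $u\in C(\R^+_t,L^2(\R^n))$ forces $\int_\Omega u_+(x,t)\dx\to\lambda_n(\Omega)$; likewise $\mathrm{RHS}(t)\to0$ by dominated convergence, since $\mu(\del\Omega,r)\to0$ as $r\to0+$ and the integrands are dominated as above. Hence $R(t)\to0$, so $R$ extends continuously to $[0,\infty)$ with $R(0)=0$, i.e.\ $R(t)=o(1)$; this proves \eqref{EqNilocFint2C} and \eqref{EqInfNtreg} for $\Omega$ with $\beta_1=\beta_2=0$. A strictly positive exponent would require a quantitative decay rate for $R$ near $0$, which one can extract only under extra structure on $\del\Omega$ — for instance, under the Minkowski-measurability hypothesis, where one combines the above with \eqref{EqExpmu1}--\eqref{EqMainAss} in the manner of Proposition~\ref{PropLimitCase}.

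The only genuinely non-elementary input is the uniform-in-$t$ convergence $N_i\to N$, which is the Mosco-convergence result quoted through Proposition~\ref{PropC3conv}; everything else is bookkeeping. The point needing care is to keep all arguments of $\mu$ in the range where \eqref{EqConvMinkS} applies and to dominate the erfc-kernel $f$ uniformly in $t$, so that the moving integration limits $t^{-\delta},2t^{-\delta}$ are harmless.
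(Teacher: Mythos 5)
Your proposal is correct and follows essentially the same route as the paper: the authors also take the $C^3$-formulas \eqref{EqNilocFint2C} and \eqref{EqInfNtreg} for the approximating domains $\Omega_i$, pass to the limit using the uniform-in-$t$ convergence $N_i\to N$ from Proposition~\ref{PropC3conv} together with the uniform convergence of the parallel volumes, and conclude only that the limiting remainder is a continuous function of $t$ vanishing as $t\to0+$, i.e.\ $o(t^{\beta})$ with $\beta\ge 0$ (the paper likewise cannot assert a positive exponent without extra structure on $\del\Omega$). Your added bookkeeping — the domination of the $\operatorname{erfc}$-kernel and the handling of the moving limits $t^{-\delta},2t^{-\delta}$ — merely fills in details the paper omits.
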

 The proof of Proposition~\ref{PropAbrCG} follows the same method as the proof of Proposition~\ref{PropLimitCase} based on the general form of formulas~\eqref{EqNilocFint2C} and~\eqref{EqInfNtreg}, and hence the details are omitted. All parameters $\Lambda$, $D_+$, $D_-$, and then $\alpha$ and $f$ don't depend on $\Omega_i$. Another main gradient, by Proposition~\ref{PropC3conv}, is the continuous dependence of $N(t, \del \Omega)$ on $\del \Omega$ (once $\Omega_i\to \Omega$ in the sense of Proposition~\ref{PropConvProp}, this implies the $L^2$-Mosco convergence of the quadratic forms, the convergence of the weak formulations), which %
 implies the convergence of $N_i(t) \to N(t)$ for $i\to +\infty$ uniformly in time. 
Without any exact precision on the boundary shape we are just able to establish that the remainder terms are converging to $0$ for $t\to0+$ continuous functions on time. We could expect that they have the form $o(\sqrt{t}\mu(\del \Omega,\eta(t)))$ for $\Lambda<\infty$ and $o(\mu(\del \Omega,\eta(t)))$ for $\Lambda=+\infty$ respectively.

 If, in addition, the domain $\Omega$ has 
 a $d$-Minkowski measurable boundary $\del \Omega$, 
 then we can use the approximation \eqref{EqMainAss} for the volume of the parallel set.
 
 Let us consider the approximation question for non-Minkowski measurable $d$-sets, presented in Subsection~\ref{SubNonMM}. Using as previously a ``nice'' approximation sequence of regular domains, we have also~\eqref{EqNilocFint2C} and~\eqref{EqInfNtreg}. 
Therefore, it is sufficient to use~\eqref{eq-LP2} in formulas~\eqref{EqNilocFint2C} and~\eqref{EqInfNtreg} to obtain the corresponding heat content asymptotic behavior when $\Omega\subset \R^2$ is a bounded simply connected admissible domain with a compact ``lattice case'' self-similar connected boundary of dimension $d\in ]1,2[$.

We illustrate the pre-fractal boundary case by Fig.~\ref{FigF3}.
\begin{figure}[!ht]
	 \begin{center}
	 \psfrag{t}{{\small $t$}}
\psfrag{N}{{\small $N(t)$}}
\psfrag{data1}{{\small numerical}}
\psfrag{data2}{{\small fractal}}
\psfrag{data3}{{\small asymptotic}}
 \includegraphics[width=\linewidth]{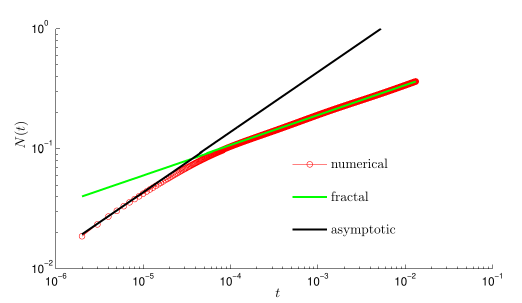}
 \end{center}
 \caption{\label{FigF3}Comparison between the asymptotic formula~\eqref{EqInfNtreg}(with $t^{-\delta}\approx2$) and a FreeFem++ numerical solution of
problem~(\ref{prb1})--(\ref{prb1end}) (red circles) for the
third generation of the Minkowski fractal ($\operatorname{Vol}(\del
\Omega)=2^3\cdot4$), with $D_-/D_+=0.4$, and $\Lambda=+\infty$. The black solid
line is the approximation by~\eqref{EqInfNtreg} considering the boundary as a Lipschitz prefractal one-dimensional curve. The green line shows the fractal
asymptotic (that would be exact for the infinite generation of the
fractal) with de Gennes approximation of $\mu\left(\del
\Omega,\sqrt{4D_+t}\right)$ in~\eqref{EqInfNtreg} by
$(4D_+t)^\frac{1}{4}$. This approximation is valid for intermediate
times.}
\end{figure}

As expected, the resistivity of the boundary to heat transfer makes
heat diffusion {\it slower} due to the presence of the coefficient
$\sqrt{t}$ (see formula~\eqref{EqNilocFint2C}).

\section{Curvature measures} \label{sec:curv-meas}
We recall some geometric background regarding curvature measures and review the question whether parallel sets of arbitrary compact sets in $\R^n$ admit curvature measures.

First order quantities for differentiable submanifolds of $\R^n$, in particular their surface area measures and related dimensions, are determined by the first order derivatives in local parametrization. This has been extended in geometric measure theory to Hausdorff rectifiable sets. Hausdorff measures $\H ^s$, Minkowski contents $\sM^s$ and associated non-integer dimensions $s$ may be considered as certain fractal counterparts. There exists a large literature on these topics.

An important subject of second order differential geometry are curvature properties of manifolds. We will concentrate here on the so-called \emph{Lipschitz-Killing curvature measures} which can be defined for various classes of singular subsets of $\rn$ using tools from geometric measure theory and algebraic geometry. Starting with the classical setting of an $n$-dimensional submanifold $M_n$ of $\R^n$ with $C^2$-smooth boundary $\partial M_n$, curvature measures of $M_n$ may be defined 
in terms of integrals over the boundary of symmetric functions of the principal curvatures $\kappa_i$:
$$
C_k(M_n,\cdot):=\frac{1}{(n-k)\omega_{n-k}}\int_{\partial M_n} \hspace{-3mm}\1_{(\cdot)}(x)\hspace{-3mm}\sum_{1\leq i_1<\cdots <i_{n-1-k}\leq n-1}\hspace{-5mm}\kappa_{i_1}(x)\cdots\kappa_{i_{n-1-k}}(x)\, \H^{n-1}(dx),
$$
$k=0,1,\ldots,n-1$, where $\omega_{j}$ denotes the volume of the $j$-dimensional unit ball and $\H^{n-1}$ the $(n-1)$-dimensional Hausdorff measure. (Note that $2\,C_{n-1}(M_n,\cdot)$ equals the surface area measure $\H^{n-1}$ on $\partial M_n$, which does not reflect curvature properties.)
In the compact case, the \emph{total curvatures}
$$
C_k(M_n):=C_k(M_n,\rn),
$$
i.e., the total masses of these (signed) measures, are known as (higher order) \emph{integrals of mean curvature}.

The $\kappa_i$ are the eigenvalues of the Weingarten mapping (defined at points of $\partial M_n$ as minus the differential of the outer unit normal). For even $n-1-k$ the curvature measures are intrinsically determined in terms of the traces of powers of the Riemannian curvature tensor. For odd $n-1-k$ the curvature measures are extrinsic. In particular, the \emph{integral of scalar curvature} arises in the special case $k=n-3$, and for $k=n-2$ the integrand reduces to the usual \emph{mean curvature} $H=\sum_{i=1}^{n-1}\kappa_i$ and thus $C_{n-2}(M_n)$ is the (first order) \emph{integral of mean curvature}, 
cf.~\cite[Chapter 3]{RZ19}.

In convex geometry, the analogues of the total curvatures $C_k(K)$ for compact, convex sets $K\subset\R^n$ are well-known as \emph{quermassintegrals} or \emph{intrinsic volumes}, which also possess localizations as curvature measures, cf.~\cite[Chapter 2]{RZ19}.

An important extension of both the convex geometric and differential geometric setting are sets with \emph{positive reach}. Recall that $\rea(X)$, the \emph{reach} of a set $X\subset\R^n$, is the largest radius $r$ such that each point with distance less than $r$ to $X$ has a unique nearest point in $X$. Studying the local volume of the parallel sets $X(\ep)$ (see \eqref{eq:parset} for the definition) of such sets $X$ for small enough $\ep>0$, Federer \cite{Fe59} introduced the curvature measures of $X$ as the coefficients in the polynomial expansion in $\ep$ of this local parallel volume. This generalized the Steiner formula from convex geometry as well as Weyl's tube formula from differential geometry. An explicit representation by integration of symmetric functions of generalized principal curvatures $\kappa_1,\ldots,\kappa_{n-1}$ or, equivalently, of Lipschitz-Killing differential forms, 
was given in \cite{Za86b}:
\begin{align}\label{curvposreach}
C_k(X,\cdot):=\frac{1}{(n-k)\omega_{n-k}}&\int_{\nor X}\1_{(\cdot)\times \bbS^{n-1}}(x,\nu) \prod_{j=1}^{n-1}(1+\kappa_{m,\eta(t)}(x,\nu)^2)^{-1/2}\\
\nonumber
&\sum_{i_1<\cdots <i_{n-1-k}}\kappa_{i_1}(x,\nu)\cdots\kappa_{i_{n-1-k}}(x,\nu)\H^{n-1}(d(x,\nu))\,
\end{align}
for $k=0,1,\ldots,n-1$, where $\bbS^{n-1}$ denotes the unit sphere in $\rn$. Note that the
integration here is not over the boundary $\bd X$ but over the \emph{unit normal bundle} of $X$, defined by
\begin{align} \label{eq:norX}
\nor X:=\{(x,\nu)\in \R^n\times \bbS^{n-1}: x\in \bd X ,\, \nu\in \operatorname{Nor}(X,x)\}
\end{align}
where $\operatorname{Nor}(X,x)$ is the set of unit outer normals of $X$ at $x$. 
The resulting curvature measures are signed measures on the Borel $\sigma$-algebra of $\rn$ with locally finite variation measures.
Further extensions of curvature measures may be found in \cite{RZ03}, \cite{RZ05} and the references therein. For details we refer to the monograph \cite{RZ19}. Recently, curvature measures have been extended even further to the class of WDC sets \cite{PR2013,Po2023}

The significance of curvature measures and in particular total curvatures is underlined by their unique position among geometric invariants. It is well-known from convex geometry that every set-additive, continuous and motion invariant functional on the space of convex bodies is a linear combination of quermassintegrals (Hadwiger's characterization theorem). Moreover, if the functional is also homogeneous of degree $k$, then it coincides up to a constant with $C_k$. A localization to curvature measures of convex sets was given in \cite{Schn78}. In \cite[Chapter 8]{RZ19} this is used together with an appropriate notion of continuity as an approximation tool for generalizing such characterizations to a large class of singular sets.

Despite all the progress of the curvature theory for singular sets, for most fractal sets, such curvature measures are not defined and they cannot be expected to exist. So different approaches are needed to study the curvature properties of fractals.
The main idea for the definition of \emph{fractal curvatures} is borrowed from the notion of Minkowski content, namely the idea of approximation of the set in question by its parallel sets. Moreover, from the study of Minkowski contents of self-similar (random) fractals in \cite{Ga00}, the classical Renewal theorem is known to be a powerful tool in this context. Fractal curvatures (to be defined below) were first introduced and investigated in \cite{Wi08} for some class of self-similar sets. More general self-similar sets (including some random self-similar sets) are discussed in \cite{Za11}, \cite{Wi11}, \cite{WZ13}, \cite{Za23}, \cite{RWZ23}.
A local approach by means of ergodic theorems for related dynamical systems can be found in \cite{RZ12}, \cite{Za13} and \cite[Chapter 10]{RZ19}. Some self-conformal sets are studied in the theses \cite{Ko11} and \cite{Bo12}, as well as in \cite{FK12}, \cite{KK12,Kombrink25,ARXIV-KOMBRINK-2023}.

In order to recall the definition of fractal curvatures, denote for points $x$ and non-empty subsets $E$ of $\rn$
\begin{\eq}\label{notations}
d(x,E):=\inf_{y\in E}|x-y|,\quad |E|:=\diam E=\sup_{x,y\in E}|x-y|,~~ E^c:=\rn\setminus E,~~\widetilde{E}:=\overline{E^c},
\end{\eq}
where $\overline{E}$ is the closure of $E$. 
(This notation for closure and complement will also be used for other basic spaces in the sequel.) The \emph{parallel set} of $E$ of distance $r>0$ is defined by
\begin{align}
 \label{eq:parset} E(r):=\left\{x\in\rn: d(x,E)\le r\right\}.
\end{align}
Assuming that curvature measures are defined for all the parallel sets of a bounded set $F\subset \R^n$, the ($s$-dimensional) \emph{fractal curvatures} of $F$ are defined by
\begin{align} \label{eq:frac-curv-def0}
 \sC_k^{s}(F)&:=\lim\limits_{\ep\rightarrow 0} \ep^{s-k}\, C_k\left(F (\ep)\right), \quad k=0,\ldots, n-1,
\end{align}
whenever these limits exist. (Replacing $C_k(F(\ep))$ by the volume $C_n(F(\ep)):=\lambda_n(F(\ep))$ makes the analogy to the $s$-dim.\ Minkowski content obvious.) It turns out that for classical sets (e.g. sets with positive reach), $s=k$ is the right scaling exponent, while for self-similar fractals typically $s=\Dim$ is the right choice for all $k$, where $\Dim$ denotes the Minkowski dimension. In the sequel we suppress the dependence on the parameter $s\geq 0$ and concentrate solely on the limits with $s=\Dim$ which we denote by $C_k^{\Frac}(F)$ (although this is not always the right choice, see \cite{Wi08,PoW14} for a discussion of this issue). A further difficulty is that curvature measures might not be defined for all parallel sets and therefore it is advisable to replace the limit in definition \eqref{eq:frac-curv-def0} by an essential limit, for which it is enough to have curvature measures defined for Lebesgue almost all $\ep>0$:
\begin{align} \label{eq:frac-curv-def01}
 C_k^{\Frac}(F)&:=\elim\limits_{\ep\rightarrow 0} \ep^{d-k}\, C_k\left(F (\ep)\right), \quad k=0,\ldots, n-1,
\end{align}
 Even this precaution does not ensure the (essential) limits in \eqref{eq:frac-curv-def01} to exist, in general. Instead of taking (essential) lower and upper limits as usual in such a case, it is sometimes better to study limits along certain discrete sequences $(\ep_n)_{n\in\N}$. In the study of self-similar sets it is well-known that Ces\`aro averaging does often improve the convergence behavior which does also open an alternative here, leading to \emph{average fractal curvatures}:
\begin{align} \label{eq:av-frac-curv}
 \widetilde{C}_k^{\Frac}(F)&:=\lim\limits_{\delta\rightarrow 0} \frac{1}{|\ln\delta|}\int_\delta^1 \ep^{\Dim-k}\, C_k\left(F (\ep)\right)\frac{d\ep}{\ep}, \quad k=0,\ldots, n-1.
\end{align}

In the present paper we consider domains with piecewise self-similar boundary. In this context related fractal curvatures have not yet been considered in the literature, but our approach here is close to some of the former. In particular, it is based on the curvature measures of parallel sets with small radii. Therefore we recall now a number of related notions and results.
\begin{definition}\label{regular pairs} A pair $(r, E)$ is called {\it regular} if and only if
\begin{align}\label{eq:regularity}
\rea\left( \widetilde{E(r)}\right)>0
~\text{ and }~\nor \widetilde{E(r)} \cap\rho(\nor\widetilde{E(r)})=\emptyset.
\end{align}
Here $\rho(x,\nu):=(x,-\nu)$ denotes the normal reflection on the corresponding normal bundle (which was defined in \eqref{eq:norX}).
\end{definition}
A necessary and sufficient condition for \eqref{eq:regularity} is that $r$ is a regular value of the distance function of $E$ , i.e., there is no $x$ with $d(x,E)=r$, which is contained in the
convex hull of its nearest points in $\overline{E}$, see \cite[Appendix]{RWZ23}. For any regular pair $(r,E)$ the curvature measures $C_k(\widetilde{E(r)},\cdot)$ are defined (since $\widetilde{E(r)}$ has positve reach) and therefore one can introduce the {\it Lipschitz-Killing curvature measures} $C_k(E(r),\cdot)$, $k=0,\ldots, n-1$, of $E(r)$ by setting
\begin{\eq} \label{sign_ep}
C_k(E(r),\cdot)=(-1)^{n-1-k}C_k\big(\widetilde{E(r)},\cdot\big).
\end{\eq}
We point out that this definition is consistent with other definitions, e.g.\ the direct one by means of integration as in \eqref{curvposreach} whenever the latter is possible, and in this case the above equation \eqref{sign_ep} is known as \emph{reflection principle},
see \cite[Example 9.10]{RZ19}.

\begin{remark}\label{large distances}
 In view of \cite[Theorem 4.1]{Fu85} we have for any compact $K$ and any $r>\sqrt{2}|K|$, that $(r,K)$ is a regular pair. Moreover, if $k=0,\ldots,d-1$ and $R>\sqrt{2}$ is fixed, then, by \cite[Theorem 4.1]{Za11}, there is a constant $c_k(R)$ such that, for any compact $K\subset\rn$,
$$\sup_{r\ge R|K|}\frac{C_k^{\var}(K(r),\rn)}{r^k}\leq c_k(R).$$
\end{remark}
In many cases the parallel sets of compact $K$ possess the following additional {\it regularity property}:
 $(r,K)$ are regular pairs for Lebesgue almost all $r>0$.
It is known that in $\mathbb{R}^2$ and $\mathbb{R}^3$ this is fulfilled for all compact $K$, see Fu~\cite[Theorem 4.1]{Fu85}. For $\R^n$ with $n\geq 4$ this will be an assumption in the sequel.

In particular, the $C_k(E(\ep),\cdot)$ are signed measures with finite {\it variation measures} $C_k^{\var}(E(r),\cdot)$, and have explicit integral representations as in \eqref{curvposreach}.\\
Moreover, in this case, for all $r'$ sufficiently close to $r$, $(r',E)$ is also a regular pair, 
and we have the {\it weak convergence}
\begin{\eq}\label{reg-convergence}
\lim_{r'\rightarrow r}C_k(E(r'),\cdot)=C_k(E(r),\cdot),
\end{\eq}
see \cite[Theorem~6.1]{RWZ23} or \cite[Proposition~4.2]{Wi15}. For $k=d-1$, this is even true for all $r>0$, regular or not, up to at most countably many exceptions, see \cite[Theorem 1.1]{RW23}. 

Below we will also need a special regularity relationship. In the sequel we denote
\begin{\eq}\label{regular distances}
\reg(E):=\{r>0: (r,E)\mbox{ is a regular pair}\}.
\end{\eq}
Note that the regularity property above means that $\L\left(\reg(E)^c\right)=0$.
If $f_1,f_2,\ldots$ is an arbitrary family of similarities in $\rn$ with similarity ratios $\rho_1,\rho_2,\ldots$, then
\begin{\eq}\label{intersection reg}
\L\left(\reg(E)^c\right)=0\text{ implies } \L\left(\bigcup_{i=1}^\infty\reg(f_i(E))^c\right)=0.
\end{\eq}
(This results from the subadditivity of $\L$ and $\L(\reg(f_i(E))^c)=\rho_i\L(\reg(E)^c)$.)

Furthermore, the following main properties of the curvature measures for regular pairs will be used:
$2\, C_{n-1}(E(r),\cdot)$ agrees with the $(n-1)$-{\it dimensional Hausdorff measure} $\H^{n-1}$ on the boundary $\partial E(r)$.
Taking into account that $\partial E(r)$ is $(n-1)$-rectifiable for any bounded set $E$ and any $r>0$, see \cite[Proposition~2.3]{RW10}), we can consistently extend the definition of $C_{n-1}(E(r),\cdot)$ to all $r>0$ by setting
$$C_{n-1}(E(r),\cdot):=\frac{1}{2}\H^{n-1}(E(r)\cap\cdot).$$
For regular pairs $(r,E)$ with bounded $E$ the {\it total curvatures}, that is, the total values of the curvature measures, are denoted by
\begin{\eq}
C_k(E(r)):=C_k(E(r),\rn),~~ k=0,\ldots ,n-1.
\end{\eq}
Then $C_{n-2}(E(r))$ is known as the {\it integral of mean curvature}.
Furthermore, by an associated Gauss-Bonnet theorem (see \cite[Theorem 4.53]{RZ19}), the {\it total Gauss curvature}
 coincides with the {\it Euler-Poincar\'{e} characteristic}, i.e.,
$C_0(E(r))=\chi(E(r)).$

The curvature measures are {\it motion invariant}, i.e., for any Euclidean motion $g$ and any regular pair $(r,E)$ the pair $(r,g(E))$ is regular and
\begin{\eq}\label{motinv}
C_k(g(E(r)),g(\cdot))=C_k(E(r),\cdot).
\end{\eq}
The $k$-th curvature measure is {\it homogeneous of degree} $k$, i.e.,
\begin{\eq}\label{scaling}
C_k(\lambda E(r),\lambda (\cdot))=\lambda^k\, C_k(E(r),\cdot)\, ,~~\lambda>0\, ,
\end{\eq}
and they are {\it locally determined}, i.e., if $(E,r)$ and $(E',r')$ are regular pairs in the sense of Definition~\ref{regular pairs}, then for any open set $G$,
\begin{\eq}\label{locality}
C_k(E(r),G\cap(\cdot))=C_k(E'(r'),G\cap(\cdot)),~{\rm if}~E(r)\cap G=E'(r')\cap G.
\end{\eq}

\section{Domains with piecewise self-similar fractal boundaries}\label{fractal bound}
In the sequel we consider throughout a bounded and simply connected open set $\dom $ in $\rn$ satisfying $\dom =\Int\big(\overline{\dom }\big)$ such that the boundary is representable as
\begin{\eq}\label{disjoint(j)}
\partial \dom =\bigcup_{j=1}^M F^{(j)}\,,
\end{\eq}
where the $F^{(j)}$ are self-similar sets in the sense of \cite{Hu81} satisfying the Open Set Condition, which all have the same Hausdorff dimension $\Dim <n$.

More precisely, for each $j=1,\ldots,M$, the set $F^{(j)}$ is defined as
follows. There are similarity mappings $S^{(j)}_1,\ldots, S^{(j)}_{N^{(j)}}$ with contraction ratios $r_1^{(j)},\ldots,r^{(j)}_{N^{(j)}}$ in $(0,1)$ satisfying
\begin{\eq}\label{Hdim}
\sum_{i=1}^{N^{(j)}} (r_i^{(j)})^{\Dim}=1\,,
\end{\eq}
such that $F^{(j)}$ is the associated self-similar set, i.e., the unique non-empty compact set such that
\begin{\eq}\label{self-similar}
F^{(j)}=\bigcup_{i=1}^{N^{(j)}} S^{(j)}_i(F^{(j)}).
\end{\eq}
We assume throughout that $\Fj$ satisfies the {\it Open Set Condition} (OSC), meaning there is some bounded open set $O^{(j)}$ such that
\begin{\eq}\label{OSC}
\bigcup_{i=1}^{N^{(j)}} S^{(j)}_i(O^{(j)})\subset O^{(j)} \text{ and } S^{(j)}_i(O^{(j)})\cap S^{(j)}_l(O^{(j)})=\emptyset \text{ for } i\neq l.
\end{\eq}
Any such set $\Oj$ is also called a \emph{feasible} open set for $\Fj$. The set $F^{(j)}$ is said to satisfy the {\it Strong Open Set Condition} (SOSC), if there is some feasible open set $O^{(j)}$ satisfying additionally
\begin{\eq}\label{SOSC}
F^{(j)}\cap O^{(j)}\neq\emptyset.
\end{\eq}
According to a result of Schief \cite{Sch94}, OSC and SOSC are equivalent, although the validity of \eqref{SOSC} depends on the choice of the open set $O^{(j)}$.
Under this condition the value $\Dim >0$ given by \eqref{Hdim} determines the Hausdorff and Minkowski dimension of $F^{(j)}$.

Figure~\ref{fig:1} shows some domains with piecewise self-similar boundary as discussed here, including the standard Koch snowflake domain. Further examples of such domains will be discussed in Example~\ref{ex:Koch-type}, see also Figure~\ref{fig:ex}, and at the end of Section~\ref{sec:rel-fc}, see in particular Figure~\ref{fig:2} and Example~\ref{ex:Koch-type2}.
\begin{figure}[t]
 \includegraphics[width=0.42\textwidth]{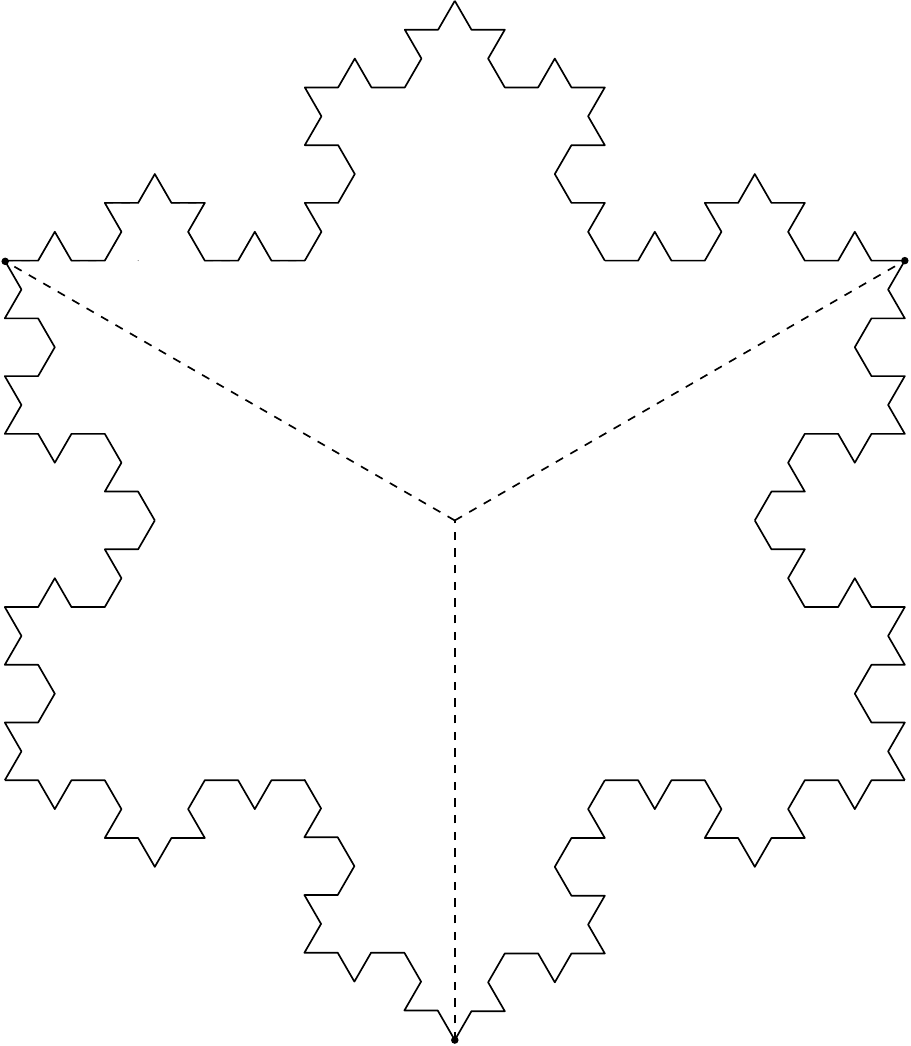}\hfill
 \includegraphics[width=0.58\textwidth]{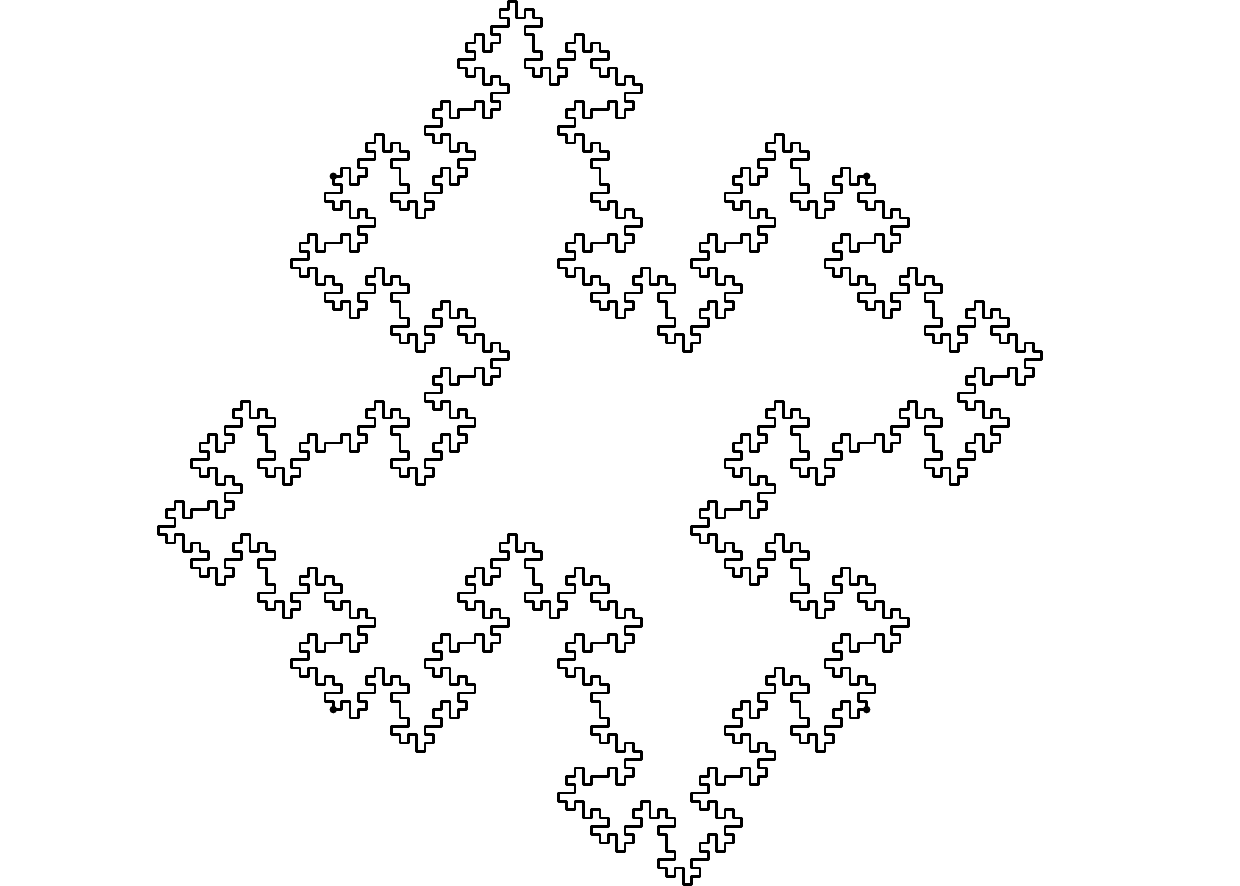}
 \caption{\label{fig:1} Examples of domains with piecewise self-similar boundaries. (Shown are in fact some polygonal approximations and not the limit curves.) \emph{Left:} Koch snowflake domain. Its boundary consists of three Koch curves intersecting only at their endpoints. The endpoints here form an equilateral triangle, but may be chosen to form any triangle. \emph{Right:} Domain $\dom$ bounded by four copies of a fractal curve generated by an IFS with 8 similarities and intersecting only at their endpoints. The Minkowski dimension of $\bd\dom$ is $d=\frac 32$.}
\end{figure}

In view of the heat transfer problems we intent to study on such domains, it seems reasonable to study separately the curvature properties of $\bd \dom$ seen from inside $\dom$ and from outside
$$
\dom_-:=\R^n\setminus \cl{\dom}.$$ This requires to adapt the notion of fractal curvatures to these one-sided situations. We will assume throughout that
the parallel sets of $\bd \dom$ are sufficiently regular in the sense of Definition~\ref{regular pairs}. More precisely, we assume that
\begin{align} \label{reg-pair-prop}
 (\ep,\bd \dom) \text{ are regular pairs for Lebesgue almost all } \ep >0.
\end{align}
Recall that this is always satisfied in $\R^2$ and $\R^3$, and recall the definition of parallel sets from \eqref{eq:parset}. Condition \eqref{reg-pair-prop} implies that curvature measures of $\bd\dom(\ep)$ are well-defined for almost all $\ep>0$ and we can introduce one-sided fractal curvatures as follows:
\begin{align} \label{eq:frac-curv-def1}
 C_k^{\Frac}(\bd\dom, \dom )&:=\elim\limits_{\ep\rightarrow 0} \ep^{\Dim-k}\, C_k\left(\bd\dom (\ep), \dom\right)=\elim_{\ep\rightarrow 0} \ep^{\Dim-k}\, C_k\left(\dom_- (\ep)\right)\\
 \label{eq:frac-curv-def2}
C_k^{\Frac}(\bd\dom, \dom_- )&:=\elim_{\ep\rightarrow 0} \ep^{\Dim-k}\, C_k\left(\bd\dom (\ep), \dom_-\right)=\elim_{\ep\rightarrow 0} \ep^{\Dim-k}\, C_k\left(\dom(\ep)\right)
\end{align}
are the \emph{inner} and \emph{outer fractal curvatures} of $\bd \dom$ (or the \emph{fractal curvatures of $\bd\dom$ relative to $\dom$ and $\dom_-$}, respectively), whenever these essential limits exist (which are understood with respect to the Lebesgue measure). 
Note that in case both of the above limits exist, the fractal curvatures of $\bd\dom$ (as defined in \eqref{eq:frac-curv-def01}) exist as well and satisfy
$$
 C_k^{\Frac}(\bd\dom)= C_k^{\Frac}(\bd\dom, \dom )+ C_k^{\Frac}(\bd\dom, \dom_- ).
$$
In order to improve the convergence behaviour, in these definitions the essential limits may be replaced by limits along discrete sequences of $\ep$, or by average limits in the same way as in \eqref{eq:av-frac-curv}, leading to \emph{inner} and \emph{outer average fractal curvatures.}

\begin{remark}
 Note that under the regularity assumption \eqref{reg-pair-prop}, both of the essential limits in \eqref{eq:frac-curv-def1} are indeed equivalent in case they exist, since $C_k\left(\bd\dom (\ep), \dom\right)=C_k\left(\dom_- (\ep)\right)$
 in this case for any regular pair $(\ep,\bd\dom)$. We point out that for the latter limit in \eqref{eq:frac-curv-def1} to be defined, regularity from one side of $\bd\dom$ would be enough, namely that almost all pairs $(\ep, \dom)$ are regular. Hence it is defined also in slightly more general situations (in $\R^n$, $n\geq 4$), in which the first essential limit might not be defined. A completely analogous remark applies to the limits in \eqref{eq:frac-curv-def2}.
\end{remark}

In the subsequent two sections we discuss two different approaches to computing such one-sided fractal curvatures and obtain existence results and explicit formulas for them for domains with piecewise self-similar boundaries, see Theorems~\ref{main} and~\ref{thm:domain-D}. 

In the remainder of the present section we recall some notations and well-known facts from the theory of self-similar sets.
First we recall the {\it distributions of the logarithmic contraction ratios} and introduce a related lattice condition adapted to our situation of piecewise self-similar boundaries.
\begin{definition}\label{logratios}
Recall that an iterated function system $\{S_1^{(j)},\ldots,S_{N^{(j)}}^{(j)}\}$ or the generated self-similar set $\Fj$ is called \emph{lattice (with lattice constant $h$)}, if the measure
\begin{\eq*}
\tnj_0:=\sum_{i=1}^{\Nj}\1_{(\cdot)}(|\ln \rj_i|)\,(\rj_i)^{\Dim}
\end{\eq*}
is lattice with constant $h$, i.e., concentrated on the set $h\N$. $\Fj$ is called \emph{non-lattice}, if no such $h$ exists.
We call the above generating system of $\partial \dom $ {\it lattice with lattice constant} $h$, if for some (or all) of the $j\in\{1,\ldots,M\}$ the measures $\tnj_0$
are lattice with the same constant $h$ and for the remaining indexes $j$ the $\tnj_0$ are non-lattice. $\partial \dom $ is called {\it completely non-lattice}, if none of the $\tnj_0$ is lattice.
\end{definition}


Next recall that the sets $F^{(j)}$ may be constructed by means of the {\it code spaces}
$$
\Wj:=\{1,\ldots,\Nj\}^\N\,,
$$
i.e., the set of all infinite words over the alphabet $\{1,\ldots,\Nj\}$. We write $\Wj_m:=\{1,\ldots,$ $\Nj\}^m$ for the set of all words $w$ of {\it length} $|w|=m$, $\Wj_*:=\bigcup_{m=1}^\infty \Wj_m$ for the set of all finite words,
 $w|m:=w_1w_2\ldots w_m$, if $w=w_1w_2\ldots w_m w
 _{m+1}\ldots$, for the {\it restriction} of a (finite or infinite) word to the first $m$ letters, and $vw$ for the {\it concatenation} of a finite word $v$ and a word $w$. If $w=w_1\ldots w_m\in\Wj_m$, then we also use the abbreviations
 $$
 \Sj_w:=\Sj_{w_1}\circ\ldots\circ\Sj_{w_m}~\text{ and } ~\rj_w:=r_{w_1}^{(j)}\ldots r_{w_m}^{(j)}
 $$
 for the mapping and the corresponding contraction ratio. Finally, we set $$\Ej_w:=\Sj_w(E)$$ for any set $E\subset\R^n$ and $w\in\Wj_{*}$. (For completeness, we also write $\Ej_\emptyset:=E$ for any $j$.) In these terms the set $\Fj$ is determined by
 $$\Fj=\bigcap_{m=1}^\infty \bigcup_{w\in \Wj_m}K^{(j)}_w$$
for an arbitrary compact set $K\subset\R^n$ with $\Sj_i(K)\subset K$, $i=1,\ldots,\Nj$.
 Iterated applications of the self-similarity property yields
\begin{\eq}\label{general-self-sim}
\Fj=\bigcup_{w\in\Wj_m}\Fj_w,\quad m\in\N.
\end{\eq}
Alternatively, the self-similar fractal $\Fj$ is the image of the code space $\Wj$ under the {\it projection} $\pi^{(j)}:\Wj\to\Fj$ given by
$$
\pi^{(j)}(w):=\lim_{m\rightarrow\infty}\Sj_{w|m}(x_0)
$$
for an arbitrary starting point $x_0$.
The mapping $w\mapsto x=\pi^{(j)}(w)$ is biunique except for a set of points $x\in\Fj$ of vanishing $\Dim$-dimensional Hausdorff measure $\H^{\Dim}$, where the {\it Hausdorff dimension} $\Dim$ of $\Fj$ is determined by \eqref{Hdim}. Up to exceptional points we {\it identify} $x\in\Fj$ {\it with its coding sequence} and write
$x_1x_2\ldots$ for this infinite word, i.e. $\pi^{(j)}(x_1x_2\ldots)=x$. For brevity, we will later on omit $\pi^{(j)}$ in this notation and write
\begin{\eq}\label{xwords}
x|m:=x_1\ldots x_m~~~{\rm and}~~~\Sj_{x|m}=\Sj_{x_1}\circ\ldots\circ
\Sj_{x_m}
\end{\eq}
for the corresponding finite words.

If $\nj$ denotes the infinite product measure on $\Wj$ of the probability measure on the alphabet $\{1,\ldots,\Nj\}$ which assigns to $i$ the probability $(\rj_i)^{\Dim}$, then the normalized $\Dim$-dimensional Hausdorff measure with support $\Fj$ is the image of $\nj$ w.r.t.\ $\pi^{(j)}$, i.e.,
\begin{equation}\label{bernoulli-meas}
\mj:=\H^{\Dim}(\Fj)^{-1}\H^{\Dim}(\Fj\cap(\cdot))=\nj\circ \pi^{-1}.
\end{equation}
It is also called the {\it self-similar measure} on $\Fj$, since we have
\begin{equation} \label{ssmu}
\mj=\sum_{i=1}^{\Nj}(\rj_i)^{\Dim} \,\mj\circ (\Sj_i)^{-1}\, .
\end{equation}
Furthermore, $\Fj$ is a $\Dim$-{\it set}. Indeed, by OSC, there exist positive constants $c_{\Fj}$ and $C_{\Fj}$ such that
\begin{equation}\label{d-set}
c_{\Fj}\, r^{\Dim}\le \H^{\Dim}(\Fj\cap B(x,r))\le C_{\Fj}\, r^{\Dim}
\end{equation}
for any $x\in \Fj$ and any $r\in(0,|\Oj|)$.

If $\Fj$ satisfies SOSC \eqref{SOSC} with respect to $\Oj$, then one obtains for any $p\in\mathbb{N}$,
\begin{equation}\label{intlogdist}
\int|\ln d(y,\partial\Oj)|^p\,\mj(dy)<\infty .
\end{equation}
(See Graf \cite[proof of Proposition 3.4]{Gr95} for $p=1$. The proof for general $p$ is similar.) This implies, in particular, that, for such $\Oj$,
\begin{equation}\label{vanishing-boundary}
\H^{\Dim}(\Fj\cap\partial\Oj)=0\,,
\end{equation}
which can also be seen using other methods.
\section{Associated fractal curvatures - local approach}\label{SecAssociatedfractalcurvatureslocalapproach}
We consider throughout a domain $\dom \subset\R^n$ with piecewise self-similar boundary as introduced in \eqref{disjoint(j)}. In this section we suppose additionally that for each self-similar set $\Fj$ in the boundary $\bd\dom$ there exists a strong feasible set $\Oj$ such that
\begin{align}
 \label{eq:O-disjoint} \Oj\cap O^{(i)}=\emptyset, \quad i\neq j.
\end{align}
In view of our boundary problem, we additionally suppose that the $\Sj_i$ are mapping some local interior neighborhoods of the boundary $\partial \dom $ into itself. More precisely, for each $j=1,\ldots,M$ there exists some open set $\Gj$ such that
\begin{\eq}\label{eq:interior}
\Fj\cap\Oj\subset\Gj\mbox{ and }\Sj_i(\dom \cap\Gj)\subset \dom \cap\Gj\,,~i=1,\ldots,\Nj.
\end{\eq}
We approximate $\dom$ by outer parallel sets of small distances and consider limits of associated rescaled curvatures as defined in \eqref{eq:frac-curv-def2}. In order to use the piecewise self-similar structure of the boundary and the classical Renewal theorem, we apply here an appropriate localization procedure by means of bump functions.

\begin{definition}\label{bump functions}
Let $a>1$ and $g:[0,\infty)\rightarrow[0,1]$ be some function with supp$(g)=[0,1]$, which is continuous on $[0,1]$ and satisfies $g(t)>0$ for $t\in[0,1)$. Then the functions $A_{\partial \dom }(x,\ep):\R^n \to [0,\infty)$, defined by
$$A_{\partial \dom }(x,\ep)(z):=\frac{\ep^{\Dim}\, g\left(\frac{|x-z|}{a\ep}\right)}{\int_{\partial \dom } g\left(\frac{|y-z|}{a\ep}\right)\, \H^{\Dim}(dy)}\, , ~~{\rm if}~|z-x|<a\ep\, ,$$
and $A_{\partial \dom }(x,\ep)(z):=0$ for $|x-z|\ge a\ep$, with parameters $x\in \partial \dom $, $0<\ep\leq 1$, are collectively called a \emph{localizing family of bump functions} associated with $\bd\dom $.
\end{definition}
First observe that the function $A_{\partial \dom }(x,\ep)$ is well-defined for any $x\in \partial \dom $ and any $\ep\leq 1$. Indeed, for fixed $x$ and $z$ with $\frac{|x-z|}{a\ep}<1$ denote $\delta:=1-\frac{|x-z|}{a\ep}$ and $r:=(\delta-\delta')a\ep$ for some $\delta'<\delta$. Then the integral in the denominator is nonzero.
To see this note that for $x\in \Fj$ and $y\in B(x,r)$ we get $\frac{|y-z|}{a\ep}\le \frac{|y-x|}{a\ep}+\frac{|x-z|}{a\ep}\leq \delta-\delta'+1-\delta=1-\delta'$. Hence,
\begin{align*}
&\int_{\partial \dom } g\left(\frac{|y-z|}{a\ep}\right) \H^{\Dim}(dy)\ge\int_{\Fj\cap B(x,r)}g\left(\frac{|y-z|}{a\ep}\right)\H^{\Dim}(dy)\\
&\geq \min_{u\in[0,1-\delta']}\limits g(u)\,\H^{\Dim}(\Fj\cap B(x,r))>\const r^{\Dim}>0.
\end{align*}
Therefore the function $A_{\partial \dom }(x,\ep)$ is well-defined.
Standard examples for $g$ are the indicator function $\1_{[0,1]}$, or $g(u)=\max\{1-u,0\}$, $u>0$.
Note that the function $(x,\ep,z)\mapsto A_{\partial \dom }(x,\ep)(z)$ is Borel-measurable. Moreover, setting
\begin{align} \label{eq:b}
 b:=\max\left\{2a,\max_{m,\eta(t)}|\Oj|\right\}
\end{align}
and recalling the notation \eqref{xwords}, the bump functions $A_{\partial \dom }(x,\ep)$ satisfy the following properties.
\begin{proposition}\label{bump-prop}
\begin{\eq}\label{support}
{\rm supp}(A_{\partial \dom }(x,\ep))=B(x,a\ep)\,,
\end{\eq}
\begin{\eq}\label{integralA}
\int_{\partial \dom } A_{\partial \dom }(x,\ep)(z)\H^{\Dim}(dx)=\ep^{\Dim} \quad \text{ for all } z\in\partial \dom (\ep)\,,
\end{\eq}
\begin{\eq}\label{bounded}
A_{\partial \dom }(x,\ep)(z)\leq C~~\mbox{for all}~~x\in\partial \dom ,\, z\in\partial \dom (\ep),\,\,\ep<1,~~\mbox{and some constant}~C.
\end{\eq}
 Furthermore, if $x\in\Fj$, $\ep\leq 1$ and $m\in\N$ are such that $\ep<b^{-1}d(x,\partial\Oj_{x|m})$ holds, then
\begin{\eq}\label{covariance}
A_{\partial \dom }(x,\ep)(z)=A_{\partial \dom }\big((\Sj_{x|m})^{-1}x,(\rj_{x|m})^{-1}\ep\big)\big((\Sj_{x|m})^{-1}z\big), ~~\text{ for all } z.
\end{\eq}
\end{proposition}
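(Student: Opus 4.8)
The plan is to prove the four properties of Proposition~\ref{bump-prop} in order, each being a direct consequence of the definition of $A_{\partial\dom}(x,\ep)$ together with the $\Dim$-set property \eqref{d-set} of the pieces $\Fj$ and the self-similarity relations.

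\textbf{Support.} The identity \eqref{support} is immediate from the definition: $A_{\partial\dom}(x,\ep)(z)=0$ whenever $|x-z|\ge a\ep$, so $\operatorname{supp}(A_{\partial\dom}(x,\ep))\subset \overline{B(x,a\ep)}$; conversely, for $|x-z|<a\ep$ we have $g(|x-z|/(a\ep))>0$ since $g$ is strictly positive on $[0,1)$, and the denominator is a finite positive number (as shown in the paragraph preceding the proposition), so $A_{\partial\dom}(x,\ep)(z)>0$. Hence the support is exactly the closed ball $B(x,a\ep)$. (One should be slightly careful whether $B$ denotes the open or closed ball; with the convention used here the support is the closure of the open ball, which coincides with the closed ball.)

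\textbf{Normalization.} For \eqref{integralA}, fix $z\in\partial\dom(\ep)$. Then there is at least one $x_0\in\partial\dom$ with $|x_0-z|\le\ep< a\ep$ (here $a>1$ is used), so the denominator $\int_{\partial\dom} g(|y-z|/(a\ep))\,\H^{\Dim}(dy)$ is a strictly positive finite constant, call it $I(z,\ep)$; finiteness holds because $g\le 1$ and $\H^{\Dim}(\partial\dom(\ep)\cap B(z,a\ep))$ is finite (each $\Fj$ is a $\Dim$-set, hence has locally finite $\H^{\Dim}$-measure, and there are finitely many pieces). Now integrate the definition over $x\in\partial\dom$ and pull the $z$-dependent denominator out of the $x$-integral:
\begin{\eq*}
\int_{\partial\dom} A_{\partial\dom}(x,\ep)(z)\,\H^{\Dim}(dx)=\frac{\ep^{\Dim}}{I(z,\ep)}\int_{\partial\dom} g\left(\tfrac{|x-z|}{a\ep}\right)\H^{\Dim}(dx)=\ep^{\Dim},
\end{\eq*}
since the last integral is precisely $I(z,\ep)$. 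Measurability of $(x,\ep,z)\mapsto A_{\partial\dom}(x,\ep)(z)$, noted just before the proposition, justifies the use of Fubini here.

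\textbf{Uniform boundedness.} For \eqref{bounded} the key estimate is a lower bound on the denominator $I(z,\ep)$ that is uniform in $x$ and $z$. Fix $x\in\Fj$, $z\in\partial\dom(\ep)$ with $|x-z|<a\ep$ (otherwise $A=0$ and there is nothing to prove), and let $\delta:=1-|x-z|/(a\ep)\in(0,1]$. Following the argument preceding the proposition with a fixed choice such as $\delta'=\delta/2$ and $r=(\delta/2)a\ep$, for $y\in \Fj\cap B(x,r)$ one gets $|y-z|/(a\ep)\le 1-\delta/2$, hence $g(|y-z|/(a\ep))\ge \min_{u\in[0,1-\delta/2]}g(u)\ge \min_{u\in[0,1/2]}g(u)=:c_g>0$ (using monotone-type control: $[0,1-\delta/2]\subset[0,1)$ and one can in fact just use $[0,1/2]$ if $r$ is shrunk to $(\delta/4)a\ep$ with the bound $|y-z|/(a\ep)\le 1-\delta/4+...$; the point is only to land inside a compact subinterval of $[0,1)$ on which $g$ has a positive minimum). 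Combined with \eqref{d-set}, $I(z,\ep)\ge c_g\,\H^{\Dim}(\Fj\cap B(x,r))\ge c_g\,c_{\Fj}\,r^{\Dim}=c_g c_{\Fj}(\delta/4)^{\Dim}(a\ep)^{\Dim}$, provided $r<|\Oj|$, which holds for $\ep$ small since $r\le a\ep$. But the exponent $\delta$ can be arbitrarily small, so this naive bound is not uniform; \textbf{this is the main obstacle.} It is resolved by a standard covering/dyadic argument: one does not use a single ball, but rather notes that $B(z,a\ep)\cap\partial\dom$ always contains some ball $B(x',r')\cap\Fj$ with $r'$ comparable to $a\ep$ (indeed any point $x_0\in\partial\dom$ with $|x_0-z|\le\ep$ belongs to some $\Fj$, and then $B(x_0,(a-1)\ep)\subset B(z,a\ep)$, with $(a-1)\ep>0$ a fixed fraction of $a\ep$). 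On this ball $g(|y-z|/(a\ep))$ need not be bounded below, but one can take the sub-ball $B(x_0,(a-1)\ep/2)$ on which $|y-z|/(a\ep)\le 1/2+(a-1)/(2a)<1$, so $g\ge c_g$ there, and then $I(z,\ep)\ge c_g c_{\Fj}((a-1)\ep/2)^{\Dim}$, a uniform lower bound of order $\ep^{\Dim}$. Since the numerator $\ep^{\Dim}g(\cdot)\le \ep^{\Dim}$, we obtain $A_{\partial\dom}(x,\ep)(z)\le \ep^{\Dim}/I(z,\ep)\le C$ with $C:=(c_g c_{\Fj})^{-1}(2/(a-1))^{\Dim}$, independent of $x,z,\ep<1$.

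\textbf{Covariance.} For \eqref{covariance}, suppose $x\in\Fj$, $\ep\le1$, $m\in\N$ and $\ep<b^{-1}d(x,\partial\Oj_{x|m})$. Write $S:=\Sj_{x|m}$, $\rho:=\rj_{x|m}$, so $S$ is a similarity of ratio $\rho$ and $S^{-1}$ a similarity of ratio $\rho^{-1}$, with $S(\Fj)=\Fj_{x|m}\subset\Fj$ and $S^{-1}x\in\Fj$. The claim is
\begin{\eq*}
A_{\partial\dom}(x,\ep)(z)=A_{\partial\dom}\big(S^{-1}x,\rho^{-1}\ep\big)\big(S^{-1}z\big)\quad\text{for all }z.
\end{\eq*}
When $|x-z|\ge a\ep$ both sides vanish (note $|S^{-1}x-S^{-1}z|=\rho^{-1}|x-z|\ge a\rho^{-1}\ep$), so assume $|x-z|<a\ep$; then $|z-x|<a\ep\le b\,\ep< d(x,\partial\Oj_{x|m})$, so $z\in\Oj_{x|m}=S(\Oj)$ and $S^{-1}z\in\Oj$. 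The numerator transforms as $\ep^{\Dim}g(|x-z|/(a\ep))=\rho^{\Dim}(\rho^{-1}\ep)^{\Dim}g(|S^{-1}x-S^{-1}z|/(a\rho^{-1}\ep))$, using $|x-z|=\rho|S^{-1}x-S^{-1}z|$. For the denominator, the crucial point is that the integration over $\partial\dom$ against $g(|y-z|/(a\ep))$ is effectively an integration over $B(z,a\ep)\cap\partial\dom$, and this piece of the boundary lies inside $\Fj\cap\Oj$ — indeed $B(z,a\ep)\subset B(x,2a\ep)\subset B(x,b\,\ep)\subset\Oj_{x|m}=S(\Oj)$ by the hypothesis $b\ep<d(x,\partial\Oj_{x|m})$, and within $S(\Oj)$ the only part of $\partial\dom$ present is $S(\Fj\cap\Oj)$ by the OSC/self-similarity structure \eqref{general-self-sim} together with \eqref{OSC} and the disjointness \eqref{eq:O-disjoint}. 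Hence $\int_{\partial\dom} g(|y-z|/(a\ep))\,\H^{\Dim}(dy)=\int_{S(\Fj)\cap S(\Oj)} g(|y-z|/(a\ep))\,\H^{\Dim}(dy)$, and substituting $y=S(y')$ with the scaling $\H^{\Dim}(S(\cdot))=\rho^{\Dim}\H^{\Dim}(\cdot)$ (valid since $S$ is a similarity of ratio $\rho$ and $\Dim$ is the Hausdorff dimension) converts this to $\rho^{\Dim}\int_{\Fj\cap\Oj} g(|y'-S^{-1}z|/(a\rho^{-1}\ep))\,\H^{\Dim}(dy')=\rho^{\Dim}\int_{\partial\dom} g(|y'-S^{-1}z|/(a\rho^{-1}\ep))\,\H^{\Dim}(dy')$, the last equality because $B(S^{-1}z,a\rho^{-1}\ep)\cap\partial\dom\subset\Oj\cap\Fj$ by the same localization (now at scale $\rho^{-1}\ep$ around $S^{-1}x$). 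The factors $\rho^{\Dim}$ in numerator and denominator cancel, and we recognize the right-hand side as $A_{\partial\dom}(S^{-1}x,\rho^{-1}\ep)(S^{-1}z)$, completing the proof.

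I expect the boundedness estimate \eqref{bounded} — specifically producing a lower bound on the normalizing integral that is uniform in $z$ and $\ep$ — to be the delicate point, since the naive estimate degenerates when $z$ lies near the edge of the support; the fix is to center the auxiliary ball at a boundary point genuinely close to $z$ rather than at $x$. The covariance property, while notation-heavy, is essentially a bookkeeping exercise once one observes that the smallness condition $\ep<b^{-1}d(x,\partial\Oj_{x|m})$ confines the entire relevant ball $B(z,a\ep)$ (and likewise $B(S^{-1}z,a\rho^{-1}\ep)$) inside the feasible set, so that the piecewise self-similar boundary looks locally exactly like a rescaled copy of $\Fj$.
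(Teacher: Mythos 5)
Your proof is correct and follows essentially the same route as the paper: the support and normalization are read off the definition, the uniform bound comes from a lower bound on the normalizing integral obtained by centering an auxiliary ball of radius comparable to $\ep$ at a boundary point within distance $\ep$ of $z$ (exactly the paper's choice of $u\in\partial\dom$ with $|u-z|=\ep$ and radius $\delta\ep$, $\delta<a-1$), and the covariance follows from localizing the denominator integral to $\Fj_{x|m}$ via $b\ge 2a$ and the scaling of $\H^{\Dim}$ under similarities. The only blemishes are cosmetic: the initial "naive" attempt in the boundedness step is an unnecessary detour, and the constant $1/2+(a-1)/(2a)$ should read $1/a+(a-1)/(2a)$, though both are $<1$ so the argument is unaffected.
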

\begin{proof}
Equation \eqref{support} is obvious.
In order to show \eqref{integralA} and \eqref{bounded} note that for any $z\in\partial \dom (\ep)$ there exists a $u\in\partial \dom $ such that $|u-z|=\ep$. Then we get for $0<\delta<a-1$ and any $y\in\partial \dom \cap B(u,\delta\ep)$,
$$\frac{|y-z|}{a\ep}\leq \frac{|y-u|}{a\ep}+\frac{|u-z|}{a\ep}\leq\frac{\delta}{a}+\frac{1}{a}=\frac{\delta+1}{a}=:c<1.$$ Consequently,
\begin{align*}
\int_{\partial \dom } g&\left(\frac{|y-z|}{a\ep}\right) \H^{\Dim}(dy)\ge\int_{\partial \dom \cap B(u,\delta\ep)}g\left(\frac{|y-z|}{a\ep}\right)\H^{\Dim}(dy)\\
\geq &\min_{u\in[0,c]}\limits g(u)\,\H^{\Dim}(\partial \dom \cap B(u,\delta\ep))>\const \delta^{\Dim}\ep^{\Dim}:=C'\,\ep^{\Dim}.
\end{align*}
Note that the constant $C'$ depends only on $\delta$, i.e., on $a$.
In the last inequality we have used again the $\Dim$-set property of the Hausdorff measure on $\partial \dom $ which follows from those for the components $\Fj$. This estimate together with the definition of $A_{\partial \dom }$ implies, in particular, \eqref{integralA}. Furthermore,
since the function $g$ in the numerator of the bump function is not greater than 1, we obtain \eqref{bounded}.\\
Finally,
\eqref{covariance} can be seen as follows. Recall that ${\rm supp}(g)=[0,1]$.
Let $x\in\Fj$, $\ep>0$ and $m\in\N$ such that $\ep\le b^{-1}d(x,\partial\Oj_{x|m})$ holds. Note that this inequality implies $(\rj_{x|m})^{-1}\ep\le 1$. For any $z$ as above with $z\notin B(x,a\ep)$, we have $(\Sj_{x|m})^{-1}z\notin B((\Sj_{x|m})^{-1}x,a(\rj_{x|m})^{-1}\ep)$ and thus both sides of \eqref{covariance} will be zero. So let from now on $z\in B(x,a\ep)$. Since $b\ge 2a$, we obtain for any $y\in\partial \dom $ with $|y-z|\le a\ep$ that $|y-x|\le|y-z|+|z-x|\le 2a\ep\leq d(x,\partial\Oj_{x|m})$. This implies that $y\in\Fj_{x|m}$ by OSC and the disjointness of the $\Oj$ for different $j$. Consequently,
\begin{align}
\int_{\bd \dom } &g\left(\frac{|y-z|}{a\ep}\right)\, \H^{\Dim}(dy)
=\int_{\Fj_{x|m}} g\left(\frac{|y-z|}{a\ep}\right)\, \H^{\Dim}(dy)\notag\\
&=\int_{\Fj_{x|m}} g\bigg(\frac{\big|(\Sj_{x|m})^{-1}y-(\Sj_{x|m})^{-1}z\big|}{a(\rj_{x|m})^{-1}\ep}\bigg)\,\H^{\Dim}(dy)\notag\\
&=(\rj_{x|m})^{\Dim}\int_{\Fj}g\bigg(\frac{\big|y-(\Sj_{x|m})^{-1}z\big|}{a(\rj_{x|m})^{-1}\ep}\bigg)\,\H^{\Dim}(dy) \label{eq:denom-int1},
\end{align}
where for the second equality we have used that $\Sj_{x|m}$ is a similitude with contraction ratio $\rj_{x|m}$, which implies also
$$
g\bigg(\frac{|x-z|}{a\ep}\bigg)=g\bigg(\frac{\big|(\Sj_{x|m})^{-1}x-(\Sj_{x|m})^{-1}z\big|}{a(\rj_{x|m})^{-1}\ep}\bigg).
$$
The third equality is due to the scaling properties of the Hausdorff measure on $\Fj$.
Furthermore, 
we have
$d(x,\partial\Oj_{x|m})=
\rj_{x|m}d((\Sj_{x|m})^{-1}x,\partial\Oj)$ and $|(\Sj_{x|m})^{-1}z-(\Sj_{x|m})^{-1}x|\le a(\rj_{x|m})^{-1}\ep$. Therefore, we obtain for any $y\in\partial \dom $ and any $z$
 with $|y-(\Sj_{x|m})^{-1}z|\le a(\rj_{x|m})^{-1}\ep$ that
$$|y-(\Sj_{x|m})^{-1}x|\le 2a\,\ep(\rj_{x|m})^{-1}\le b\,\ep(\rj_{x|m})^{-1}\le d(\Sj_{x|m})^{-1}x,\partial\Oj).$$ Hence, $y\in\Fj$ and therefore integration over $\Fj$ in \eqref{eq:denom-int1} may be replaced by integration over $\partial \dom $. This yields
$$\int_{\partial \dom } g\left(\frac{|y-z|}{a\ep}\right)\, \H^{\Dim}(dy)=(\rj_{x|m})^{\Dim}\int_{\partial \dom }g\bigg(\frac{\big|y-(\Sj_{x|m})^{-1}z\big|}{a(\rj_{x|m})^{-1}\ep}\bigg)\,\H^{\Dim}(dy).$$
The last two equalities lead to \eqref{covariance}.
 \end{proof}
Now we are ready to state the main result of this section regarding the limiting behaviour of the Lipschitz-Killing curvature measures $C_k(\dom (\ep),\cdot)$, $k=0,\ldots,n-1$, as $\ep\rightarrow 0$, provided the corresponding regularity condition \eqref{reg-pair-prop} is fulfilled.
For brevity, we will write
$$
C_k(\dom (\ep), \varphi):= \int \varphi(z)\,C_k(\dom (\ep),dz) \quad \text{ for~ integrable~ functions } \varphi.
$$
\begin{theorem}\label{main}
Let $\dom $ be a bounded open set in $\rn$ with
piecewise self-similar boundary as defined in \eqref{disjoint(j)}, i.e.,
 $\partial \dom =\bigcup_{j=1}^M\Fj$, where the $\Fj$ are self-similar sets with the same Hausdorff dimension $\Dim$ satisfying SOSC \eqref{SOSC}. For each $\Fj$ there exists a strong feasible set $\Oj$ such that \eqref{eq:O-disjoint} and 
 \eqref{eq:interior} hold.\\
Let $k\in\{0,\ldots, n-1\}$.
If $k\le n-2$, then we suppose additionally that the regularity condition \eqref{reg-pair-prop} is satisfied
(which is always true for $n\le 3$) and that the estimate
\begin{equation}\label{ibound}
\esup_{\ep\leq 1}\limits\frac{1}{f(\ep)}\int_{\Fj}{\bf 1}\left\{d(y,\partial\Oj_{y_1})\leq b\,\ep\right\}\, \ep^{-k}C_k^{\var}\left(\dom (\ep),A_{\partial \dom }(y,\ep)\right)\, \mathcal{H}^{\Dim}(dy)<\infty
\end{equation}
holds for all $j\in\{1,\ldots,M\}$, some bounded function $f:(0,\infty)\mapsto (0,\infty)$ such that $t\mapsto f(e^{-t})$, $t\in(0,\infty)$, is directly Riemann integrable, and some family of bump functions $A_{\bd \dom }(x,\ep)$ as in Definition~\ref{bump functions} with constants $a>1$ and $b$ 
as in \eqref{eq:b}.

Then we obtain the following limits, where $\ej:=\sum_{i=1}^{\Nj}|\ln \rj_i|\,(\rj_i)^{\Dim}$, $j=1,\ldots,M$.
\begin{itemize}
\item[{\rm (i) }]\label{ordlim}
If the generating system for $\partial \dom $ is completely non-lattice (cf.~Def.~\ref{logratios}), then the outer fractal curvatures of $\bd \dom$ (as defined in \eqref{eq:frac-curv-def2}) exist and are given by
$$
C_k^{\Frac}(\bd \dom, \dom_- )
=\sum_{j=1}^M \Ij_k, \text{ where }
$$
$$\Ij_k:=(\ej)^{-1}\int_{\Fj}\int_{b^{-1}d(x,\partial\Oj_{x|1})}^{b^{-1}d(x,\partial\Oj)}\ep^{-(k+1)}C_k\left(\dom (\ep), A_{\partial \dom }(x,\ep)\right)\,d\ep\, \mathcal{H}^{\Dim}(dx).
$$
\item[{\rm (ii)}] \, If there is some $\ell<M$ such that the sets $F^{(1)},\ldots,F^{(\ell)}$ are non-lattice, and the others are lattice with the same lattice constant $h$, then one has for Lebesgue-a.a. $u\in[0,h)$,
$$
\lim_{n\rightarrow\infty}e^{(u+nh)(\Dim-k)}\, C_k\big(\dom (e^{-(u+nh)})\big)
=\sum_{j=1}^\ell \Ij_k+\sum_{j=\ell+1}^M \Lj_k(u),
$$
where $\Ij_k$ is as above and $\Lj_k(u)$ is given by
\begin{align*}
(\ej)^{-1}\sum_{m=1}^\infty e^{-k(u+mh)} \int_{\Fj}\1\left\{b^{-1}d(x,\partial\Oj_{x|1})\leq e^{-(u+mh)}<b^{-1}d(x,\partial\Oj)\right\}\\
 C_k\big(\dom (e^{-(u+mh)}), A_{\bd \dom }(x,e^{-(u+mh)})\big)
\, \mathcal{H}^{\Dim}(dx).
\end{align*}
\end{itemize}
\end{theorem}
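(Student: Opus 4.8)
The plan is to reduce the asymptotic analysis of the rescaled total curvatures $\ep^{\Dim-k}C_k(\dom(\ep))$ to a renewal-type equation and then invoke the classical Renewal Theorem. First I would use the partition $\partial\dom=\bigcup_{j=1}^M\Fj$ together with the bump functions from Definition~\ref{bump functions}: by property~\eqref{integralA}, for every $z\in\partial\dom(\ep)$ one has $\int_{\partial\dom}A_{\partial\dom}(x,\ep)(z)\,\H^{\Dim}(dx)=\ep^{\Dim}$, so integrating $C_k(\dom(\ep),dz)$ against this identity yields the exact decomposition
\begin{equation*}
\ep^{\Dim}\,C_k(\dom(\ep))=\sum_{j=1}^M\int_{\Fj}C_k\big(\dom(\ep),A_{\partial\dom}(x,\ep)\big)\,\H^{\Dim}(dx),
\end{equation*}
valid for almost all $\ep$ by the regularity assumption~\eqref{reg-pair-prop}. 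This splits the problem into $M$ independent pieces, one per self-similar component, and I would treat each $\Fj$ separately, suppressing $j$ where convenient.

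Next I would set up the renewal equation for the quantity $Z(\ep):=\int_{\Fj}\ep^{-k}C_k(\dom(\ep),A_{\partial\dom}(x,\ep))\,\H^{\Dim}(dx)$. The key geometric input is the covariance property~\eqref{covariance}: whenever $\ep<b^{-1}d(x,\partial\Oj_{x|m})$, the bump function at scale $\ep$ around $x$ is the pushforward under $\Sj_{x|m}$ of the bump function at the rescaled level $(\rj_{x|m})^{-1}\ep$ around $(\Sj_{x|m})^{-1}x$. Combined with the scaling~\eqref{scaling} and locality~\eqref{locality} of the curvature measures and the fact that, by~\eqref{eq:interior}, $\Sj_i$ maps interior neighborhoods of the boundary into themselves (so $C_k(\dom(\ep),\cdot)$ restricted to the relevant neighborhood agrees with the corresponding curvature measure of $\Sj_i(\dom)(\ep)$-type sets), this lets me express $Z(\ep)$ in terms of $Z((\rj_i)^{-1}\ep)$ for the first-level cylinders, plus a ``boundary'' contribution coming from those $x$ for which $d(x,\partial\Oj_{x|1})$ is already comparable to $b\ep$ — precisely the set appearing in the hypothesis~\eqref{ibound}. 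Writing $\ep=e^{-t}$ and changing variables via the self-similar measure $\mj$ (identifying $x\in\Fj$ with its coding sequence and using $\mj=\sum_i(\rj_i)^{\Dim}\mj\circ(\Sj_i)^{-1}$ from~\eqref{ssmu}), this becomes a genuine renewal equation $z(t)=\phi(t)+\sum_{i=1}^{\Nj}(\rj_i)^{\Dim}z(t-|\ln\rj_i|)$, where the inhomogeneity $\phi$ is (up to the bound~\eqref{ibound}) directly Riemann integrable and the step-distribution is $\tnj_0$.

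Then I would apply the Renewal Theorem in the two cases. In the completely non-lattice case (i), the Renewal Theorem gives $z(t)\to(\ej)^{-1}\int_0^\infty\phi(s)\,ds$ as $t\to\infty$, and unwinding the change of variables identifies this limit with $\Ij_k$, where the integration range $[b^{-1}d(x,\partial\Oj_{x|1}),b^{-1}d(x,\partial\Oj))$ is exactly the ``one renewal interval'' over which a point $x$ contributes before being absorbed into a deeper cylinder; summing over $j$ gives $C_k^{\Frac}(\bd\dom,\dom_-)=\sum_j\Ij_k$. In the lattice case (ii), the non-lattice components $F^{(1)},\dots,F^{(\ell)}$ still contribute their limits $\Ij_k$, while each lattice component contributes, via the lattice Renewal Theorem, a value depending on the residue $u\in[0,h)$ of $t$ modulo $h$, which after discretizing the renewal integral into a sum over $m$ yields exactly the series defining $\Lj_k(u)$; the ``Lebesgue-a.a.\ $u$'' qualifier absorbs the almost-everywhere defect coming from~\eqref{reg-pair-prop}. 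The conversion between the essential limit in~\eqref{eq:frac-curv-def2} and the discrete-sequence limit along $\ep_n=e^{-(u+nh)}$ is handled by the weak-continuity~\eqref{reg-convergence} of the curvature measures at regular radii together with the uniform bound from Remark~\ref{large distances}.

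\textbf{Main obstacle.} The hard part will be rigorously justifying the renewal equation: one must show that the ``overlap error'' — the contribution of points $x\in\Fj$ whose bump support $B(x,a\ep)$ straddles the boundary $\partial\Oj_{x|1}$ of a first-level cell, or lies near where two components $\Fj,F^{(i)}$ or two cylinders meet — is controlled. Assumption~\eqref{ibound} is precisely designed to make the inhomogeneity $\phi$ directly Riemann integrable and to keep these error terms summable, but verifying that the locality~\eqref{locality} of curvature measures genuinely applies (i.e.\ that on $B(x,a\ep)$ the set $\dom(\ep)$ coincides with the appropriately scaled copy, using~\eqref{eq:interior} and the disjointness~\eqref{eq:O-disjoint}) requires careful bookkeeping of which cylinder a given $x$ and scale $\ep$ belong to, and of the transition as $\ep$ crosses $b^{-1}d(x,\partial\Oj_{x|m})$ for successive $m$. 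A secondary technical point is ensuring that the variation measures $C_k^{\var}$ (not just the signed measures) satisfy the bounds needed for dominated-convergence arguments when passing from the integral representation to the renewal limit; here Remark~\ref{large distances} and the integrability~\eqref{intlogdist} of $|\ln d(y,\partial\Oj)|^p$ against $\mj$ provide the needed control near the ``boundary of the feasible set''.
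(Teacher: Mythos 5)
Your proposal follows essentially the same route as the paper's proof: localization of the total curvature via the bump-function identity \eqref{integralA} and Fubini, reduction to a per-component renewal structure using the covariance \eqref{covariance}, scaling \eqref{scaling} and locality \eqref{locality} (with \eqref{eq:interior} and \eqref{eq:O-disjoint} justifying the local identification of $\dom(\ep)$ with a scaled copy), control of the residual term by \eqref{ibound}, and Feller's Renewal Theorem in the non-lattice and lattice cases, with weak continuity \eqref{reg-convergence} supplying the a.e.\ continuity needed for direct Riemann integrability. The only difference is presentational — you derive the renewal equation $Z=\phi+\tilde\nu_0*Z$ and then solve it, whereas the paper telescopes the sum over cylinder depths $m$ directly into the convolution $Z^{(j)}(t)=\int_0^t z^{(j)}(t-s)\,dU^{(j)}(s)$ — which is the same argument.
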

\begin{proof}The main arguments for (i) and (ii) are similar as in the proof of Theorem 3.1 in \cite{Za13}, where a special bump function was used. We will give a detailed proof for our situation.\\
First recall from \eqref{regular distances} the definition of the set $\reg$ of regular distances to a set. We now denote
\begin{\eq}\label{reg*}
\reg_*(\dom ):=\bigcap_{j=1}^M\bigcap_{w\in\Wj_*}\reg\left((\Sj_w)^{-1}(\dom )\right).
\end{\eq}
From \eqref{intersection reg} we obtain
\begin{\eq}\label{comp reg}
\L\left(\reg_*(\dom )^c\right)=0.
\end{\eq}
Note that $\ep\in\reg_*(\dom )$ implies that $(r_w)^{-1}\ep\in\reg_*(\dom )$
for all $w\in\Wj_*$, $j=1,\ldots,M$. In the sequel we consider only such $\ep$, i.e., Lebesgue almost all.

Next recall that $\H^{\Dim}$-a.a. points $x\in\Fj$ have a unique coding sequence $x_1x_2x_3\ldots$, which we identify with $x$. If we define for such $x$,
$T(x):=(\Sj_{x_1})^{-1}(x)$, then $T(x)\in\Fj$ and we can again identify $T(x)$ with its coding sequence, i.e., $$T(x)=x_2x_3\ldots.$$
Recall that $b=\max\left\{2a,\max_{m,\eta(t)}|\Oj|\right\}$.
Let now $x\in\Fj$, $\ep\leq 1$ and $m\in\N$ be such that the condition
$\ep<b^{-1}d(x,\partial \Sj_{x|m}\Oj)$ is satisfied. Then
$$d\left(x,\bd\Oj_{x|m}\right)=\rj_{x|m}\, d\left(T^m x,\partial\Oj\right),$$
and therefore $\ep<(\rj_{x|m})^{-1}\ep\leq 1$ because of the choice of the constant $b$.
Furthermore, this condition implies that, for any $a'$ with $a<a'<2a-1$,
\begin{align}\label{locB}
\dom (\ep)\cap B(x,a'\ep)^{\circ}&=\left(\Sj_{x|m}(\dom )(\ep)\right)\cap B(x,a'\ep)^{\circ}\notag\\
&=\Sj_{x|m}\left(\dom ((\rj_{x|m})^{-1}\ep)\right)\cap B(x,a'\ep)^{\circ}.
\end{align}
In order to show the first equality, first note that by SOSC with disjoint $\Oj$ and the inclusion property \eqref{eq:interior} we get
$$\dom \cap B(x,a'\ep)^{\circ}= \Sj_{x|m}(\dom )\cap B(x,a'\ep)^{\circ}.$$
Then it suffices to show that
$$ \left(\Sj_w F^{(l)}\right)(\ep)\cap B(x,a'\ep)^{\circ}=\emptyset$$
for all pairs $(l,w)$ of numbers and words such that either $l=j$, $w\in W^{(j)}_m$, $w\neq x|m$, or $l\neq j$, $w=x|m$ or $w=\emptyset$. (This means that there are no contributions to the parallel set $\dom (\ep)$ within the ball $B(x,a'\ep)^{\circ}$ arising from the other parts of the boundary $\partial \dom $.)\\
Suppose the contrary, i.e., that there is some $y\in(\Sj_w F^{(l)})(\ep)\cap B(x,a'\ep)^{\circ}$ for such a pair $(l,w)$. Then we have $|x-y|<a'\ep$ and there exists some $z\in\Sj_w(F^{(l)})$ such that $|y-z|\le\ep$. Hence,
$|x-z|\leq|x-y|+|y-z|<a'\ep+\ep$ and thus, $d(x,\Sj_w(F^{(l)})\leq(a'+1)\ep<2a\ep$. By SOSC with disjoint $\Oj$ this is in all three cases for the pairs $(l,w)$ a contradiction to the condition $d(x,\partial\Oj_{x|m})\geq b\ep\geq 2a\ep$. Consequently, the above intersection sets are empty, which leads to \eqref{locB}.\\
Therefore we can use the locality \eqref{locality} of curvature measures and ${\rm supp}(A_{\partial \dom }(x,\ep))= B(x,a\ep)$ in order to obtain for $\ep\in
\reg_*(\dom )\cap(0,1]$, $\H^{\Dim}$-a.a.\ $x\in\Fj$, $m\in\mathbb{N}$ and $\ep<b^{-1}d(x,\partial S^{(j)}_{x|m}(O^{(j)}))$ the following.
\begin{align*}
C_k\left(\dom (\ep),A_{\bd \dom }(x,\ep)\right)&=C_k\left(\left(\Sj_{x|m}\dom\right)(\ep), A_{\partial \dom }(x,\ep)\right)\\
&=C_k\left(\Sj_{x|m}\left(\dom \left((\rj_{x|m})^{-1}\ep\right)\right), A_{\partial \dom }(x,\ep)\right).
\end{align*}
By the covariance representation \eqref{covariance} of the bump functions, the last expression is equal to
$$C_k\left(\Sj_{x|m}\left(\dom \left((\rj_{x|m})^{-1}\ep\right)\right), A_{\bd \dom }\left(T^m x,(\rj_{x|m})^{-1}\ep\right)\circ(\Sj_{x|m})^{-1}\right).$$
Finally, using the scaling property of the curvature measures under similarities we infer for $\ep,x,m$ as above,
\begin{\eq}\label{aux}
C_k\big(\dom (\ep),A_{\partial \dom }(x,\ep)\big)=(\rj_{x|m})^k C_k\big(\dom ((\rj_{x|m})^{-1}\ep),A_{\partial \dom }(T^m x,(\rj_{x|m})^{-1}\ep)\big).
\end{\eq}
Recall \eqref{integralA}, i.e.,
$\int_{\partial \dom } A_{\partial \dom }(x,\ep)(z)\H^{\Dim}(dx)=\ep^{\Dim}$ for all $z\in\partial(\dom (\ep))$. Together with Fubini's theorem and \eqref{aux} this allows to localize the rescaled total curvatures as follows.
\begin{align}\label{S1S2}
\ep^{\Dim-k}&C_k\big(\dom (\ep)\big)=
\int \ep^{-k}\int A_{\partial\dom}(x,\ep)(z)\H^{\Dim}(dx)C_k(\dom (\ep),dz)\nonumber\\
& =\int_{\partial \dom }\ep^{-k} C_k\big(\dom (\ep),A_{\partial \dom }(x,\ep)\big)\,\H^{\Dim}(dx)\nonumber\\
&=\sum_{j=1}^M\sum_{m=1}^\infty\int_{\Fj}\1\left(b^{-1}d(x,\partial \Sj_{x|m+1}(\Oj)\le\ep< b^{-1}d(x,\partial\Sj_{x|m}(\Oj)\right)\nonumber\\
&\hspace{3cm}\ep^{-k} C_k\big(\dom (\ep),A_{\partial\dom}(x,\ep)\big)\,\H^{\Dim}(dx)\nonumber\\
&+\sum_{j=1}^M\int_{\Fj}\1\left(b^{-1}d(x,\partial\Sj_{x|1}(\Oj))\le\ep\right)\ep^{-k} C_k\big(\dom (\ep),A_{\partial \dom }(x,\ep)\big)\,\H^{\Dim}(dx)\nonumber\\
&=:S_1(\ep)+S_2(\ep)\, .
\end{align}
By our integrability assumption the last sum $S_2(\ep)$ vanishes as $\ep\rightarrow 0$, where $\ep\in\reg_*(\dom )$. In the first sum $S_1(\ep)$ we now will consider the summands with different $j$ separately and derive the corresponding limit expression. Summation over $j$ then leads to the assertion.
In view of \eqref{aux} we get for such a sum the expression
\begin{align*}
&\sum_{m=1}^\infty\int_{\Fj}\1\left(b^{-1}\rj_{x|m}d(T^mx,\partial\Sj_{(T^mx})_1(\Oj))\le\ep<
b^{-1}\rj_{x|m}d(T^mx,\partial\Oj)\right)\\
&\hspace{2cm} \ep^{-k} C_k\big(\dom (\ep),A_{\partial \dom }(x,\ep)\big)\,\H^{\Dim}(dx)\\
=&\sum_{m=1}^\infty\int_{\Fj}\1\left(b^{-1}d(T^mx,\partial\Sj_{(T^mx)_1}(\Oj))\leq(\rj_{x|m})^{-1}\ep<b^{-1}d(T^mx,\partial\Oj)\right)\\
&\hspace{2cm}\ep^{-k}(\rj_{x|m})^k C_k\big(\dom ((r_{x|m})^{-1}\ep),A_{\partial \dom }(T^mx,(\rj_{x|m})^{-1}\ep)\big)\H^{\Dim}(dx)\, .
\end{align*}
Recall that we identify the points $x\in\Fj$ with their coding sequences and that the normalized Hausdorff measure $\mj$ on $\Fj$ is the image of the product measure $\nj$ on the code side. Substituting $x=x_1\ldots x_mx_{m+1}\ldots=:x_1\ldots x_my_1y_2\ldots=x|m\, y$ in the integrals we obtain for the last expression
\begin{align*}
&\sum_{m=1}^\infty\int_{\Fj}\1\left(b^{-1}d(y,\partial\Sj_{y_1}(\Oj)\le (\rj_{x|m})^{-1}\ep< b^{-1}d(y,\partial\Oj)\right)\\
&\hspace{2cm}\ep^{-k}(\rj_{x|m})^k C_k\big(\dom ((\rj_{x|m})^{-1}\ep),A_{\partial \dom }(y,(\rj_{x|m})^{-1}\ep)\big)
\H^\Dim (d(x|n\, y))\\
=&\sum_{m=1}^\infty\int_{\Fj}\int_{\Fj}\1\left(b^{-1}d(y,\partial\Sj_{y_1}(\Oj))\le(\rj_{x|m})^{-1}\ep< b^{-1}d(y,\partial\Oj)\right)\\
&\hspace{2cm}\ep^{-k}(\rj_{x|m})^k C_k\big(\dom ((\rj_{x|m})^{-1}\ep),A_{\partial \dom }(y,(\rj_{x|m})^{-1}\ep)\big)\H^{\Dim}(dy)\mu(dx).
\end{align*}
In order to translate this into the language of the classical Renewal theorem we denote the distribution of $\rj_{x|m}$ with respect to $\mj$, i.e., of the product of $m$ i.i.d. random variables, by $\nj_m$ and that of their absolute logarithmic values by $\tnj_m$. Then $\tnj_m$ is the $m$-{\it th convolution power} of the discrete distribution
$\tnj_0=\sum_{i=1}^{\Nj}\1_{(\cdot)}(|\ln \rj_i|)\, (\rj_i)^{\Dim}\, .$
We further consider the associated {\it renewal function} $$\Uj(s):=\sum_{m=1}^\infty\tnj_m((0,s])$$ and substitute $\ep$ by $e^{-t}$, i.e., $t\in T$, where
$$T:=\{t>0:\,e^{-t}\in\reg_*(\dom )\}.$$
In these notations we get for the right-hand side of the above equation the expression
\begin{align*}
&\sum_{m=1}^\infty\int\int_{\Fj}\1\left(b^{-1}d(y,\partial\Sj_{y_1}(\Oj))\le r^{-1}\ep< b^{-1}d(y,\partial\Oj)\right)\\
&\hspace{2cm}\ep^{-k}r^k C_k\big(\dom (r^{-1}\ep),A_{\partial \dom }(y,r^{-1}\ep)\big)\H^{\Dim}(dy)\nj_m(dr)\\
=&\sum_{m=1}^\infty\int_0^t\int_{\Fj}\1\left(b^{-1}d(y,\partial\Sj_{y_1}(\Oj))\le e^{s-t}< b^{-1}d(y,\partial\Oj)\right)\\
&\hspace{4cm}e^{(t-s)k} C_k\big(\dom (e^{s-t}),A_{\partial \dom }(y,e^{s-t})\big)\H^{\Dim}(dy)\tnj_m(ds)\\
=&\int_0^t\int_{\Fj}\1\left(b^{-1}d(y,(\Sj_{y_1}(\Oj))\le e^{s-t}< b^{-1}d(y,\partial\Oj)\right)\\
&\hspace{4cm}e^{(t-s)k} C_k\big(\dom (e^{s-t}),A_{\partial \dom }(y,e^{s-t})\big)\H^{\Dim}(dy)d\Uj(s)\\
=&\int_0^t \zj(t-s)\, d\Uj(s)=:\Zj(t)
\end{align*}
where $\zj(t)$ is defined by
$$
\int_{\Fj}\1\left\{d(y,\partial\Oj_{y|1})\le be^{-t}<d(y,\partial\Oj)\right\}e^{tk} C_k\big(\dom (e^{-t}),A_{\partial \dom }(y,e^{-t})\big)\H^{\Dim}(dy)\,,$$
defined on $T$.
We now introduce two auxiliary functions on $(0,\infty)$ by
\begin{align*}
\bzj(t)&:=\begin{cases}
 \zj(t), &{\rm if}~t\in T\\
 \limsup\limits_{\stackrel{t'\to t}{t'\in T}} z(t'), &{\rm if}~t\in (0,\infty)\setminus T,
\end{cases}\\
\bZj(t)&:=\int_0^t\bzj(t-s)d\Uj(s),~t>0.
\end{align*}
Then we infer from the last of the above equalities that
\begin{\eq}\label{Zbar}
\bZj(t)=\Zj(t) \quad \text{ for } t\in T.
\end{\eq}
By assumption \eqref{ibound}, $|\bzj|$ is bounded by a directly Riemann integrable function. (For $k=n-1$ this is always satisfied, see Proposition~\ref{riem-int-suf} below.) Furthermore, if $z(t)$ is continuous at all $t\in T$, then by construction, $\bzj$ is also continuous at these $t$, i.e., at Lebesgue almost all. (It can be seen by means of convergence for sequences of points.) In this case $\bzj$ is directly Riemann integrable, see e.g.\ Asmussen \cite[Prop.~4.1, p.118]{As87}.
Therefore the Renewal theorem in Feller \cite[p.363]{Fel71} tells us that in the non-lattice case
$$\lim_{t\rightarrow\infty}\limits\bZj(t)=(\ej)^{-1}\int_0^\infty\bzj(t)\, dt\, .$$
The right hand side agrees with
$$\frac{1}{\ej}\int_0^\infty\zj(t)\,dt\,,$$
since $\bzj(t)=\zj(t)$ for Lebesgue-a.a. $t$.

Substituting then $e^{-t}$ by $\ep$ and summing over $j$ one obtains the first assertion, taking into regard that $\sum_{j=1}^M\Zj(t)=S_1(e^{-t})$ in equations \eqref{S1S2}.

In the lattice case the Renewal theorem leads for Lebesgue-a.a. $u\in[0,h)$ to
$$\lim_{n\rightarrow\infty}\Zj(u+nh)=\frac{h}{\ej}\sum_{m=1}^\infty\zj(u+mh).$$
Then summation over $j$ yields the second assertion, using the above convergence for those $j$, where the corresponding system is non-lattice.

Thus, it remains to show that $\zj(t)$ is continuous at $t\in T$, equivalently, that the function $\zj(-\ln\ep):=$
$$\int_{\Fj}\hspace{-2mm}\1\left\{b^{-1}d(y,\bd\Sj_{y_1}\Oj)\le\ep< b^{-1}d(y,\bd\Oj)\right\}C_k\big(\dom (\ep),A_{\partial \dom }(y,\ep)\big)\H^{\Dim}(dy)$$
is continuous at $\ep\in\reg_*(\dom )\cap(0,1]$. To this aim we will use the weak convergence \eqref{reg-convergence},
$$\lim_{\ep'\rightarrow\ep}C_k(\dom (\ep'),\cdot)=C_k(\dom (\ep),\cdot)$$
at all $\ep\in\reg_*(\dom )$. For brevity we denote the indicator function under the integral by $\1(y,\ep)$. By Fubini's theorem the above function is equal to
$$\int\int_{\Fj}\1(y,\ep)A_{\partial \dom }(y,\ep)(z)\,\H^{\Dim}(dy)\, C_k(\dom (\ep),dz)\, .$$
Note that the measure $\H^{\Dim}$ restricted to $\Fj$ has no mass on spheres (see \cite[Lemma 2.1.3]{RZ12}). Therefore the function $\int_{\Fj}\1(y,\ep)A_{\partial \dom }(y,\ep)(z)\,\H^{\Dim}(dy)$ is continuous at any $(z,\ep)\in\rn\times(0,1]$ and hence, uniformly continuous on compact neighborhoods. Consequently, the function
$$\int_{\Fj}\1(y,\ep')A_{\partial \dom }(y,\ep')(z)\,\H^{\Dim}(dy)$$ is continuous at $z$ uniformly in $\ep'$ in a neighborhood of $\ep$. This together with the above weak continuity of $C_k$ implies the desired continuity of the above function $\zj(-\ln\ep)$ at all $\ep\in\reg_*(\dom )\cap(0,1]$.
\end{proof}
The following property of the surface area of the parallel sets will be useful in the subsequent statement.
\begin{remark}\label{rem:surface-est}
The surface area of the parallel sets $K(r)$
of arbitrary compact sets $K\subset\rn$ has the following property:
\begin{equation} \label{sa}
\H^{n-1}(\partial(K(r)))\leq \frac nr \H^n(K(r)),\quad r>0.
\end{equation}
This follows from the \emph{Kneser property} of the volume function $r\mapsto \H^n(K(r))$ of $K$, which implies $\frac{d}{dr}\H^{n}(K(r))\leq\frac{n}{r}\H^n(K(r))$, see e.g.\ Rataj, Schmidt and Spodarev \cite[Lemma~4.6 and its proof]{RSS}, and from the fact that $\frac{{\rm d}}{{\rm d}r}\H^n(K(r))=\H^{n-1}(\partial(K(r)))$ for all $r>0$ except countably many. (See \cite{RW10} and \cite{RW13} for more details.)
\end{remark}
Now we provide a sufficient condition for the local estimate \eqref{ibound}.
\begin{proposition}\label{riem-int-suf} If
 $$ c_{\max}:=\max_{m,\eta(t)}\esup_{\ep\le 1}\limits\sup_{y\in\Fj}\ep^{-k}\,
C^{\var}_k\left(\Fj(\ep),B(y,a\ep)\right)<\infty\, ,$$
which is always true for $k=n-1$, then for any $p>1$ there is some constant $C>0$ such that the function $f$ with $$f(e^{-t})=C\min\left(1,t^{-p}\right)$$ fulfills condition \eqref{ibound} of Theorem~\ref{main}.
\end{proposition}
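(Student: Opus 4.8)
The plan is to verify condition \eqref{ibound} of Theorem~\ref{main} by controlling the total variation of the curvature measure $C_k(\dom(\ep),\cdot)$ restricted to a ball $B(y,a\ep)$, using the hypothesis $c_{\max}<\infty$, and then by showing that the measure-theoretic ``mass'' of the set of points $y$ with $d(y,\partial\Oj_{y_1})\le b\ep$ decays at a polynomial rate in $|\ln\ep|$, which will supply the required directly Riemann integrable bound $f$.

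First I would bound the integrand in \eqref{ibound} pointwise. Since $A_{\partial\dom}(y,\ep)$ is supported in $B(y,a\ep)$ and bounded by the constant $C$ from \eqref{bounded}, and since by the locality \eqref{locality} of curvature measures one has $C_k^{\var}(\dom(\ep),B(y,a'\ep)^\circ)=C_k^{\var}(\Fj(\ep),B(y,a'\ep)^\circ)$ for $\ep$ small relative to $d(y,\partial\Oj_{y_1})$ (the argument used to establish \eqref{locB}, adapted to the single component $\Fj$), we get
$$
\ep^{-k}C_k^{\var}\left(\dom(\ep),A_{\partial\dom}(y,\ep)\right)\le C\,\ep^{-k}C_k^{\var}\left(\Fj(\ep),B(y,a\ep)\right)\le C\,c_{\max}
$$
uniformly in $y\in\Fj$ and a.a.\ $\ep\le 1$. (For $k=n-1$ the finiteness of $c_{\max}$ is immediate from Remark~\ref{rem:surface-est}: $2C_{n-1}^{\var}(\Fj(\ep),B(y,a\ep))=\H^{n-1}(\partial\Fj(\ep)\cap B(y,a\ep))\le\H^{n-1}(\partial\Fj(\ep))\le \frac n\ep\H^n(\Fj(\ep))$, and $\H^n(\Fj(\ep))=O(\ep^{n-\Dim})=O(\ep)$ since $\Fj$ is Minkowski nondegenerate, so $\ep^{-(n-1)}C_{n-1}^{\var}(\Fj(\ep),B(y,a\ep))$ is bounded; for $k\le n-2$ it is the standing assumption.)

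Next I would estimate $\mathcal{H}^{\Dim}\left(\{y\in\Fj:\ d(y,\partial\Oj_{y_1})\le b\ep\}\right)$. Decomposing $\Fj=\bigcup_{i=1}^{\Nj}\Sj_i(\Fj)$, a point $y\in\Sj_i(\Fj)$ has $y_1=i$ and $d(y,\partial\Oj_i)=\rj_i\,d(\Sj_i^{-1}y,\partial\Oj)$, so $d(y,\partial\Oj_{y_1})\le b\ep$ is equivalent to $d(\Sj_i^{-1}y,\partial\Oj)\le b\ep/\rj_i$. By the scaling property \eqref{ssmu} of $\mj$ and the moment bound \eqref{intlogdist} (with $p$ replaced by a larger exponent), a Markov-type estimate gives $\mj\left(\{z\in\Fj: d(z,\partial\Oj)\le \rho\}\right)\le C_p\,|\ln\rho|^{-p}$ for all small $\rho>0$ and any $p\ge1$; summing over $i$ and using $\H^{\Dim}(\Fj)<\infty$ yields
$$
\int_{\Fj}\1\left\{d(y,\partial\Oj_{y_1})\le b\ep\right\}\mathcal{H}^{\Dim}(dy)\le C'\,|\ln\ep|^{-p}
$$
for $\ep$ small; for $\ep$ bounded away from $0$ this quantity is trivially bounded by $\H^{\Dim}(\Fj)$. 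Combining with the pointwise bound above, the left-hand side of \eqref{ibound} divided by $f(\ep):=C\min(1,|\ln\ep|^{-p})=C\min(1,t^{-p})$ (with $t=-\ln\ep$) is bounded, as required; and since $t\mapsto C\min(1,t^{-p})$ is bounded, nonincreasing for large $t$ and $\int_0^\infty \min(1,t^{-p})\,dt<\infty$ when $p>1$, it is directly Riemann integrable (a bounded, a.e.\ continuous, eventually monotone integrable function is d.R.i.; cf.\ Asmussen \cite[Prop.~4.1]{As87}).

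The main obstacle is the second step: obtaining the polynomial-in-$|\ln\ep|$ decay of $\mj\left(\{d(\cdot,\partial\Oj)\le\rho\}\right)$ from \eqref{intlogdist}. This is a genuine quantitative input — \eqref{intlogdist} gives finiteness of all logarithmic moments of $|\ln d(y,\partial\Oj)|$, and Chebyshev's inequality applied to the $p$-th moment converts this into the bound $\mj\{d(y,\partial\Oj)\le\rho\}\le \left(\int|\ln d(y,\partial\Oj)|^p\,\mj(dy)\right)|\ln\rho|^{-p}$ for $\rho<1$. One must be slightly careful that $b\ep/\rj_i\le b\ep/\rj_{\min}$ stays below $1$ for small $\ep$, which only shifts the estimate by an additive constant in $|\ln\ep|$ and hence does not affect the polynomial rate. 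Everything else is routine bookkeeping with the constants $a$, $b$ from \eqref{eq:b} and the locality and scaling properties of curvature measures already recorded above.
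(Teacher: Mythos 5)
Your proposal is correct and follows essentially the same route as the paper: bound the curvature factor by $c_{\max}$ using the support and boundedness of the bump function (with locality to pass from $\dom(\ep)$ to $\Fj(\ep)$), then apply Markov's inequality to the logarithmic moment \eqref{intlogdist} to obtain the $|\ln\ep|^{-p}$ decay, treating $|\ln\ep|\le 1$ trivially and noting the direct Riemann integrability of $t\mapsto\min(1,t^{-p})$ for $p>1$. The only cosmetic difference is that you rescale via the first-level cylinder decomposition where the paper directly bounds $|\ln(b^{-1}d(y,\partial\Sj_{y_1}\Oj))|$ by $\operatorname{const}+|\ln d(x,\partial\Oj)|$; these are the same computation.
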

\begin{proof}

First recall that $C_{n-1}(\dom (\ep),\cdot)=\frac{1}{2}\H^{n-1}(\dom (\ep)\cap (\cdot))$. Then for $k=n-1$ the condition of the lemma follows from the corresponding estimate for the volume measure $\H^n(\dom (\ep)\cap(\cdot))$ together with \eqref{sa} (see \cite[Remark 3.22]{RZ12}).

Next note that $\min(1,t^{-p})$ is bounded and directly Riemann integrable on $[0,\infty)$. Moreover,
\begin{align*}
&\hspace{-0.5cm}\left| \int_{\Fj}\1\left\{b^{-1}d\left(y,\partial\Sj_{y_1}\Oj\right)\le\ep\right\}\, \ep^{-k}C_k\left(\dom (\ep),A_{\partial \dom }(y,\ep)\right)\, \mathcal{H}^{\Dim}(dy)\right|\\
&\le \int_{\Fj}\1\left\{b^{-1}d(y,\bd\Sj_{y_1}\Oj)\le\ep\right\}\, \ep^{-k}C_k^{\var}\left(\Fj(\ep),B(y,a\ep)\right)\, \mathcal{H}^{\Dim}(dy)\\
&\le c_{\max} \int_{\Fj} \1\left\{b^{-1}d(y,\partial\Sj_{y_1}\Oj)\le\ep\right\}\, \H^{\Dim}(dy)\\
&\le c_{\max} \int_{\Fj} \1\left\{\left|\ln\left(b^{-1} d(y,\bd \Sj_{y_1}\Oj)\right)\right|\ge |\ln \ep|\right\}\,\H^{\Dim}(dy)\\
&\le c_{\max} |\ln\ep|^{-p}\int_{\Fj}\left|\ln\left(b^{-1} d(y,\bd\Sj_{y_1}\Oj)\right)\right|^p\,\H^{\Dim}(dy)\\
&\le c_{\max} |\ln\ep|^{-p}\int_{\Fj}\left(\const+\left|\ln d(x,\partial\Oj)\right|\right)^p\,\H^{\Dim}(dx)\le \const |\ln\ep|^{-p}
\end{align*}
with varying constants in the estimates. Here we have applied the Markov inequality together with the moment property \eqref{intlogdist} of the logarithmic distance function. Setting $\ep=e^{-t}$ we infer the desired estimate if $|\ln\ep|>1$. In the case $|\ln\ep|\le1$ we use the fact that the whole integral is bounded by $c\,\H^{\Dim}(\Fj)$.
\end{proof}
\begin{remark} \label{rem:avlim}
An analysis of the proofs shows that in Theorem~\ref{main} and Proposition~\ref{riem-int-suf} the essential supremum and the essential limit can always be replaced by those over the set $\reg_*(\dom )$ from \eqref{reg*}. Moreover, completely analogous results hold for the inner fractal curvatures as defined in \eqref{eq:frac-curv-def1}.

Furthermore, in the general case, i.e., independently of whether the generating system is lattice or non-lattice, under some integrability conditions the following average limits exist:
\begin{enumerate}
\item[{\rm(i)}]
for $\mathcal{H}^{\Dim}$-a.a. $x\in \Fj$ the {\it average density} at $x$
\begin{\eq*}
\lim_{\delta\rightarrow 0}\frac{1}{|\ln\delta|}\int_\delta^{b^{-1}d(x,\partial\Oj)} \ep^{-k}C_k\big(\Fj(\ep), A_{\partial \dom }(x,\ep)\big)\, \frac{d\ep}\ep,
\end{\eq*}
exists and assumes the same value $\mathcal{D}^{(j)}_{k,\bd\dom }$, which is given by 
the constant $\Ij_k$ from Theorem~\ref{main} {\rm (i)}, i.e.,
$\mathcal{D}^{(j)}_{k,\bd\dom }=\Ij_k\,,~~j=1,\ldots M;$
\item[{\rm (ii)}]
the {\it average outer fractal curvatures} of $\bd\dom$ exist and are given by
$$\tC_k^{\Frac}(\bd\dom,\dom_- ):=\lim_{\delta\rightarrow 0}\frac{1}{|\ln\delta|}\int_\delta^1 \ep^{d-k}C_k\big(\dom (\ep)\big)
\, \frac{d\ep}\ep=\sum_{j=1}^M \Ij_k,$$
in particular they agree with $C_k^{\Frac}(\bd\dom,\dom_- )$, whenever the latter exist; 
\item[{\rm (iii)}]
the {\it average outer fractal curvature measures} of $\bd\dom$
$$\tC_k^{\Frac}(\bd \dom ,\dom_-,\cdot):=\lim_{\delta\rightarrow 0}\frac{1}{|\ln\delta|}\int_\delta^1 \ep^{d-k}C_k\big(\dom (\ep),\cdot\big)
\, \frac{d\ep}{\ep},$$
exist as weak limits of signed measures, and satisfy
$$\tC_k^{\Frac}(\bd\dom ,\dom_-,\cdot)=\int_{\partial \dom }\,\1_{(\cdot)}(x)\, \mathcal{D}_{k,\bd\dom }(x)\,\H^{\Dim}(dx)\,,$$
where $\mathcal{D}_{k,\bd\dom }(x):=\sum_{j=1}^M \1_{\Fj}(x)\,\mathcal{D}^{(j)}_{k,\bd\dom }$
 for $\H^{\Dim}$-almost all $x\in\bd \dom $.
\end{enumerate}
\end{remark}
The proofs are similar as in \cite{RZ12} for the case of self-similar sets, where a special bump function was used. Here they are omitted.

\begin{figure}
 \includegraphics[width=0.6\textwidth]{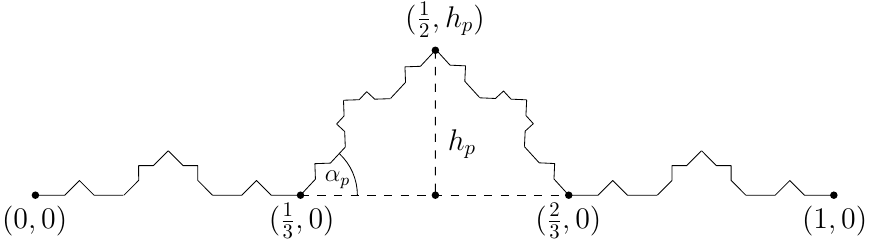}\hspace{-0.16\textwidth}
 \includegraphics[width=0.55\textwidth]{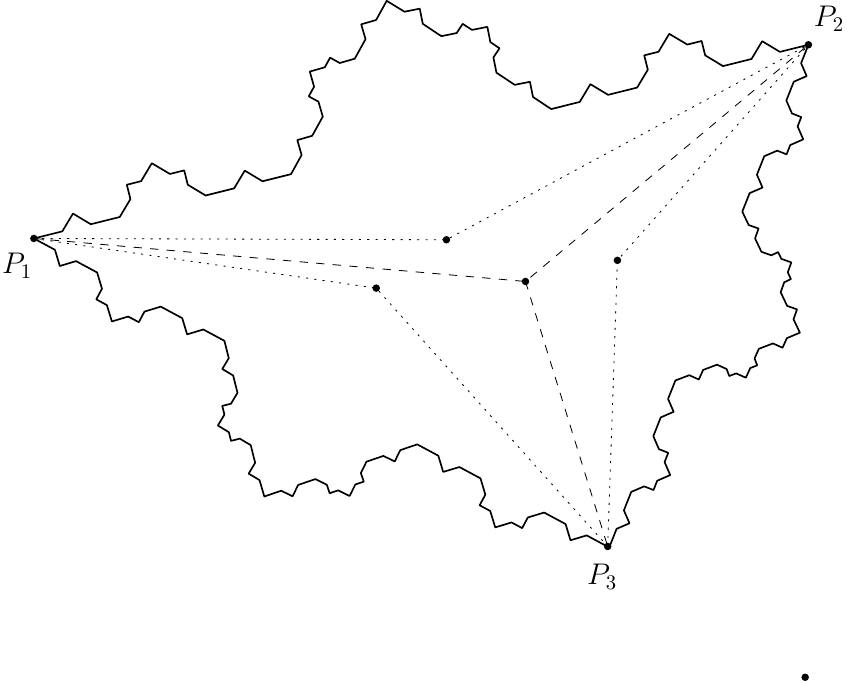}
 \caption{\label{fig:ex} Examples of domains with piecewise self-similar boundaries. \emph{Left:} A Koch-type fractal curve as constructed in Example~\ref{ex:Koch-type} (here with parameter $p=\frac 1{3\sqrt{2}}$). \emph{Right:} Domain bounded by three scaled copies of the same Koch-type curve as in Example~\ref{ex:Koch-type}.}
\end{figure}


\begin{ex} \label{ex:Koch-type}
 To illustrate our results, we discuss a class of snowflake-type domains in $\R^2$. They include the Koch snowflake depicted in Figure~\ref{fig:1} (left) and the domains in Figure~\ref{fig:ex}. For $p\in[\frac 16,\frac 13]$ let $F=F_p$ be the Koch type curve generated by an IFS of four similarity mappings $\{S_1,S_2, S_3, S_4\}$ given as follows:
 $S_1(x)=\frac 13 x$,
 $S_2(x)=pg_p(x)+(\frac 13,0)$,
 $S_3(x)=pg_p^{-1}(x)+(\frac 12,h_p)$,
 $S_4(x)=\frac 13 x+\frac 23$, $x\in\R^2$,
 where $g_p$ denotes the counterclockwise rotation by angle $\alpha_p:=\arccos(\frac 1{6p})$ and $h_p:=\sqrt{p^2-(\frac 16)^2}$. It is easy to see that this IFS satisfies OSC (the interior $O$ of the convex hull of $F_p$ is a feasible open set) and that $F_p$ is a fractal curve with endpoints $(0,0)$ and $(1,0)$. The pieces $S_iF_p$ and $S_{i+1}F_p$ intersect in a single point for $i=1,2,3$. The Minkowski dimension $d$ of $F_p$ is given by the equation $2 (1/3)^d + 2 p^d =1$. For $p=\frac 13$ the standard Koch curve is obtained, and for $p=\frac 16$, the generated set $F$ is the line segment from $(0,0)$ to $(1,0)$. Note that the dimension $d$ of $F$ increases monotonically from 1 to $\ln 4/\ln 3$ as the parameter $p$ varies from $\frac 16$ to $\frac 13$, and that the set $F$ is lattice for $p=\frac 13$, while it is non-lattice for most choices of $p$ (more precisely for all $p$ such that $\ln p/\ln(1/3)\notin\mathbb{Q}$). Observe that $F$ is contained in the closed half-space above the $x_1$-axis.\\
 In order to construct a domain with piecewise self-similar boundary, pick three points $P_1$, $P_2$ and $P_3$ in $\R^2$ in general position (i.e. not on a line).
 Fix some $p\in[\frac 16,\frac 13]$ and the generated set $F=F_p$ as above. Replace each side of the triangle $P_1P_2P_3$ by a scaled copy of $F$ with the same endpoints in such a way that the copy has nonempty intersection with the interior of the triangle. For an equilateral triangle and $p=\frac 13$, one obtains the Koch snowflake, the general situation is depicted in Figure~\ref{fig:ex} (right). The domain $\dom$ formed in this way has a piecewise self-similar boundary as in \eqref{disjoint(j)} with pieces $F^{(1)}$, $F^{(2)}$ and $F^{(3)}$ of the same Minkowski dimension $d\in(1,2)$ for any $p\in(\frac 16,\frac 13]$. Moreover, the set $\Oj:=\Int(\conv(\Fj))$, i.e.\ the interior of the convex hull of $\Fj$, is a strong feasible open sets for $\Fj$, $j=1,2,3$, such that the assumptions \eqref{eq:O-disjoint} and \eqref{eq:interior} are satisfied.
 Hence Theorem~\ref{main} is applicable directly for $k=1$, and for $k=0$, if there exists a function $f$ as required by the hypothesis. For this it suffices to check that the constant $c_{\max}$ in Proposition~\ref{riem-int-suf} is finite. This can indeed be seen as follows: by \eqref{support} and \eqref{bounded} in Proposition~\ref{bump-prop}, there is some constant $C'>0$ such that $C_k^\var(\Fj(\ep),A_{\partial \dom }(y,\ep))\leq C'\cdot C_k^\var (\Fj(\ep),B(y,a\ep))$ holds for all $y\in\Fj$ and almost all $0<\ep<1$ (namely all $\ep$ such that $(\ep,\Fj)$ is regular).
 Now, by \cite[Lemma 3.3]{WZ13}, there is some constant $C''$ such that
 $$
 C_k^\var (\Fj(\ep),B(y,a\ep))\leq C'' \ep^k \#\widehat\Sigma^{(j)}(B(y,a\ep),\ep),
 $$
 where, for any set $B\subset\R^n$ and some fixed $R>\sqrt{2}|\Fj|$,
 $$
 \widehat\Sigma^{(j)}(B,\ep):=\left\{\s\in\Sigma^{(j)}_*: R r_\s\leq\ep<R r_{\s||\s|-1} \text{ and } (\Sj_\s \Fj)(\ep)\cap B\neq\emptyset\right\}.
 $$
 (That the assumptions of \cite[Lemma 3.3]{WZ13} are satisfied is shown for the Koch curve in \cite[Example 5.3]{Wi11} and the same argument carries over to all the curves considered here.)
 By a standard argument related to OSC, one can see that, for fixed $\ep$, the cardinality of the families $\widehat\Sigma^{(j)}(B(y,a\ep),\ep)$ is bounded uniformly in $y$, and by a scaling argument, that the bound is the same for all $\ep$. Since this is true for any $j$, this allows to conclude the existence of the constant $c_{\max}$ in Proposition~\ref{riem-int-suf}.
\end{ex}
\section{Inner and outer fractal curvatures via tilings}\label{sec:rel-fc}

Formulas for fractal curvatures of a domain with piecewise self-similar boundary may be also be obtained from 'generator formulas' as discussed in \cite{Wi15}. They are based on a tiling constructed on some feasible open set from OSC. As a preparation, we recall the tiling construction for a single self-similar set $F\subset\R^n$ generated by the IFS $\{S_1,\ldots,S_N\}$ and satisfying OSC. We discuss a mild generalization of a result from \cite{Wi15} regarding the existence of relative fractal curvatures. 
Later we apply this result to the situation of a domain $\dom $ with piecewise self-similar boundary as described above. We use the notation from Section~\ref{fractal bound} but omit the upper index $j$ as long as we are discussing a single self-similar set. Beside OSC we assume as before that $F\subset\R^n$ has Minkowski dimension $d$ strictly less than $n$.

Let $O$ be some feasible open set for $F$. We are interested in the limits
\begin{align*}
 C_k^{\Frac}(F,O)&:=\elim_{\ep\searrow 0} \ep^{\Dim-k} C_k(F(\ep),O) \quad\text{ and } \\ 
 \tC_k^{\Frac}(F,O)&:=\lim_{\delta \searrow 0}\frac{1}{|\ln\delta|} \int_\delta^1
\eps^{\Dim-k} C_k(F(\ep),O) \frac{d\eps}{\eps},
\end{align*}
which we call the \emph{$k$-th fractal curvature} and \emph{average $k$-th fractal curvature of $F$ relative to $O$}, respectively. In order to obtain a generator formula for $C_k^{\Frac}(F,O)$ or $\tC_k^{\Frac}(F,O)$, we require a feasible set $O$ such that the following \emph{projection condition} is satisfied (which we recall from \cite{Wi15}):
\begin{align}\label{eqn:PC}
S_i O\subset \cl{\pi^{-1}_F(S_i F)}, \quad \text{ for } i=1,\ldots,N.
\end{align}
Here $\pi_F$ denotes the metric projection onto the set $F$. Note that $\pi_F$ is defined on the set $\Unp(F)$ of points $x\in\R^{n}$ which have a unique nearest point in $F$. The projection condition has the following important implication, which we recall from \cite[Lemma 3.19(i)]{Wi15}:
\begin{lemma}
 \label{lem:PC} 
If the projection condition \eqref{eqn:PC} holds, then, for each $\ep>0$ and $i=1,\ldots, N$,
\begin{align}
 \label{eqn:PC2} F(\ep)\cap S_i O=(S_i F)(\ep)\cap S_i O.
\end{align}
\end{lemma}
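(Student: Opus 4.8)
\textbf{Proof plan for Lemma~\ref{lem:PC}.} The statement to prove is the identity $F(\ep)\cap S_i O=(S_i F)(\ep)\cap S_i O$ for each $\ep>0$ and $i=1,\ldots,N$, under the projection condition \eqref{eqn:PC}. The plan is to prove the two inclusions separately. The inclusion $(S_i F)(\ep)\cap S_i O\subseteq F(\ep)\cap S_i O$ is the easy one: since $S_i F\subseteq F$, we have $(S_i F)(\ep)\subseteq F(\ep)$ by monotonicity of parallel sets (immediate from the definition \eqref{eq:parset}), and intersecting both sides with $S_i O$ gives the claim. No use of the projection condition is needed here.

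For the reverse inclusion $F(\ep)\cap S_i O\subseteq (S_i F)(\ep)\cap S_i O$, I would take a point $x\in F(\ep)\cap S_i O$ and show $x\in (S_i F)(\ep)$, i.e. that $d(x,S_i F)\le\ep$. The key observation is that $x\in S_i O\subseteq \cl{\pi_F^{-1}(S_i F)}$ by the projection condition \eqref{eqn:PC}. First I would handle the case $x\in \pi_F^{-1}(S_i F)$ directly: then $x\in\Unp(F)$ and its unique nearest point $\pi_F(x)$ lies in $S_i F$, so $d(x,S_i F)\le |x-\pi_F(x)|=d(x,F)\le\ep$ because $x\in F(\ep)$. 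Thus $x\in(S_iF)(\ep)$, and combined with $x\in S_iO$ this gives $x\in (S_iF)(\ep)\cap S_iO$. For the general case $x\in\cl{\pi_F^{-1}(S_i F)}$, I would pass to a limit: pick $x_m\in \pi_F^{-1}(S_i F)$ with $x_m\to x$; by the argument just given, $d(x_m,S_iF)\le d(x_m,F)$, so $d(x,S_iF)=\lim_m d(x_m,S_iF)\le \lim_m d(x_m,F)=d(x,F)\le\ep$ using continuity of the distance functions $d(\cdot,S_iF)$ and $d(\cdot,F)$ (both $1$-Lipschitz) and the fact that $x\in F(\ep)$ means $d(x,F)\le\ep$. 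Hence $x\in (S_iF)(\ep)$, completing the reverse inclusion.

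Combining the two inclusions yields the desired equality. The only mild subtlety — and the step I would be most careful about — is the closure argument: one must verify that membership of $x$ in the \emph{closure} of $\pi_F^{-1}(S_iF)$ (rather than in the set itself) still forces $d(x,S_iF)\le d(x,F)$, which is exactly where the Lipschitz continuity of the two distance functions is used to pass the inequality $d(x_m,S_iF)\le d(x_m,F)$ to the limit. Everything else is routine manipulation of parallel sets and nearest-point projections. Since the paper states this is recalled verbatim from \cite[Lemma 3.19(i)]{Wi15}, I expect the author's proof to follow essentially this line, possibly even more tersely.
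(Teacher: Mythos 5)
Your proof is correct: the easy inclusion follows from $S_iF\subseteq F$, and the reverse inclusion uses exactly the right mechanism, namely that the projection condition places $x\in S_iO$ in $\cl{\pi_F^{-1}(S_iF)}$, so that $d(x,S_iF)\le d(x,F)\le\ep$, with the $1$-Lipschitz continuity of both distance functions handling the passage from $\pi_F^{-1}(S_iF)$ to its closure. The paper itself does not reprove this lemma (it is quoted from \cite[Lemma 3.19(i)]{Wi15}), but your argument is the standard one given there, so nothing further is needed.
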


Let
\begin{align*}
 G:=O\setminus\bigcup_{i=1}^N \cl{S_i O} \quad \text{ and } \quad \Gamma:= O\setminus\bigcup_{i=1}^N S_i O.
\end{align*}
Note that $G$ is an open set but $\Gamma$ is not closed in general. It satisfies $G\subset\Gamma\subset G\cup\bigcup_{i=1}^N \bd S_iO$. Moreover, $G=\Int(\Gamma)$. (Indeed, if $x\in\Int(\Gamma)\setminus G$, then on the one hand there is some $r>0$ such that the ball $B(x,r)$ is contained in $\Int(\Gamma)$. On the other hand, $x\notin G$ implies $x\in\bd S_{m,\eta(t)} O$ for some $j\in\{1,\ldots,N\}$. Hence there exists a point $y\in S_{m,\eta(t)} O$ with $d(x,y)<r$, i.e., $y\in B(x,r)$, a contradiction.) Consider the set family
\begin{align*}
 \sT(O):=\left\{ S_\sigma G:\sigma\in \Sigma_*\right\}
\end{align*}
of all copies of $G$ generated by applying finitely many maps $S_i$ of the IFS. $\sT(O)$ is a \emph{tiling} of the set $O$ in the following sense: the \emph{tiles} $S_\sigma G$ are pairwise disjoint and their closed union covers $O$, that is,
\begin{align*}
 R\cap R'=\emptyset \text{ for all } R,R'\in\sT(O) \text{ with } R\neq R' \text{ and } \overline{O}=\cl{\bigcup_{R\in\sT(O)} R},
\end{align*}
see \cite[Theorem 5.7]{PeW12}. $G$ is called the \emph{generator} of the tiling $\sT(O)$.

We consider the \emph{relative inradius} of $G$ w.r.t.\ $F$, given by
\begin{align} \label{eq:rel-inr}
 \tilde{g}:=\sup\{d(x,F):x\in G\}.
\end{align}
Note that $\tilde{g}$ is equivalently given by $\sup\{d(x,F):x\in O\}$, see \cite[p.~303]{Wi15} for a proof. 


The following statement is a generalization of \cite[Proposition 4.9]{Wi15} in that we do not assume $O$ to be a strong feasible set here. This is an important extension in view of the later application to domains $\dom$, for which the feasible set $O$ should be chosen as a subset of $\dom$ or of the exterior $\dom_-$ which precludes $O$ to be strong. 

\begin{theorem}{} \label{thm:rel-curv-O}
Let $F$ be a self-similar set in $\R^n$ satisfying OSC with similarity dimension $\Dim <n$ and let $k\in\{0,\ldots,n-1\}$.
Let $O$ be a feasible open set for $F$ such that the projection condition \eqref{eqn:PC} and the following three conditions (i)-(iii) are satisfied:
\begin{enumerate}[(i)]
\item $k=n-1$ or almost all $\ep\in(0,\tilde g)$ are regular for $F$; 
\item $C_k^\var(F(\ep),\bd \Gamma)=0$ for almost all $\ep\in(0,\tilde g)$;
\item there are positive constants $c,\gamma$ such that for almost all $\ep\in(0,\tilde g)$
\begin{equation}\label{eqn:C_k-G-O}
C_k^{\var}(F(\ep),\Gamma)\le c \ep^{k-\Dim+\gamma}.
 \end{equation}
\end{enumerate}
Then $\ep^{\Dim-k}C_k^{\var}(F(\ep), O)$ is bounded for $\ep\in(0,\tilde g)$. Moreover, the average $k$-th fractal curvature of $F$ relative to $O$
 exists and is given by
\begin{align}\label{eqn:Yk-Gamma}
\asC_k^{\Dim}(F,O) =\frac{1}{\eta} \int_0^{\tilde{g}} \ep^{\Dim-k-1} C_k(F(\ep),\Gamma)\ d\ep.
\end{align}
If $F$ is nonlattice, then also the $k$-th fractal curvature $\sC_k^{\Dim}(F,O)$ 
of $F$ relative to $O$ exists and equals the expression in \eqref{eqn:Yk-Gamma}.
\end{theorem}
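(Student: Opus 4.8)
The plan is to reduce the statement to an application of the classical Renewal Theorem, mimicking the structure of the proof of \cite[Proposition 4.9]{Wi15} but carefully tracking where the strong feasibility of $O$ was used there and replacing it by the three conditions (i)--(iii) together with the projection condition. The starting point is the tiling $\sT(O)=\{S_\sigma G:\sigma\in\Sigma_*\}$ of $O$. Since the tiles are pairwise disjoint with $\overline O=\cl{\bigcup_\sigma S_\sigma G}$, and since curvature measures do not charge the (lower-dimensional) tile boundaries under condition (ii) (here one uses \eqref{intersection reg} and condition (ii) applied to all rescaled copies, noting $C_k^\var(F(\ep),\bd S_\sigma\Gamma)=r_\sigma^k C_k^\var((S_\sigma)^{-1}F(\ep),\bd\Gamma)$ and that almost all $\ep$ remain regular after rescaling), one can decompose
\begin{align}\label{eq:tile-decomp}
 C_k(F(\ep),O)=\sum_{\sigma\in\Sigma_*} C_k(F(\ep),S_\sigma G),\qquad \text{for a.a. }\ep.
\end{align}
For a fixed word $\sigma$ with $|\sigma|=m$, the key geometric input is Lemma~\ref{lem:PC}: the projection condition gives $F(\ep)\cap S_\sigma O=(S_\sigma F)(\ep)\cap S_\sigma O$ (by iterating \eqref{eqn:PC2}), hence, using the locality \eqref{locality} and the scaling property \eqref{scaling} of curvature measures,
\begin{align}\label{eq:tile-scaled}
 C_k(F(\ep),S_\sigma G)=C_k((S_\sigma F)(\ep),S_\sigma G)=r_\sigma^k\, C_k\big(F(r_\sigma^{-1}\ep),G\big),
\end{align}
provided $r_\sigma^{-1}\ep$ is small enough that $G$ only "sees" the $F(\cdot)$-contribution, which is automatic since $G\subset O$ and $\sup\{d(x,F):x\in G\}=\tilde g$, so only $\ep<r_\sigma\tilde g$ give a nonzero term.

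**Setting up the renewal equation.** Substituting \eqref{eq:tile-scaled} into \eqref{eq:tile-decomp} and organizing the sum by the first letter $\sigma_1$ of $\sigma$, one obtains a renewal-type identity. Concretely, put $\ep=e^{-t}$ and $z(t):=e^{t(k-\Dim)}\,\mathbf 1\{e^{-t}<\tilde g\}\,C_k(F(e^{-t}),G)$, and let $\phi(t):=e^{-t(\Dim-k)}\sum_{\sigma\in\Sigma_*} r_\sigma^k C_k(F(r_\sigma^{-1}e^{-t}),G)$ rescaled appropriately; separating the empty word from the rest and using $\sum_{i=1}^N r_i^{\Dim}=1$ yields
\begin{align}\label{eq:renewal}
 Z(t)=z(t)+\sum_{i=1}^N r_i^{\Dim}\,Z(t-|\ln r_i|),
\end{align}
where $Z(t):=e^{t(\Dim-k)}C_k(F(e^{-t}),O)$ after correcting for the $\ep^{\Dim-k}$-weight, i.e. $Z(t)=\ep^{\Dim-k}C_k(F(\ep),O)|_{\ep=e^{-t}}$. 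The "reward" function $z$ is supported (essentially) on $[|\ln\tilde g|,\infty)$ and the crucial point is that it is \emph{directly Riemann integrable}: its integrability $\int_0^\infty |z(t)|\,dt=\int_0^{\tilde g}\ep^{\Dim-k-1}C_k^\var(F(\ep),G)\,d\ep<\infty$ follows from condition (iii) applied to $\Gamma\supset G$ (the exponent $k-\Dim+\gamma$ makes the integrand $O(\ep^{\gamma-1})$, integrable near $0$) and from Remark~\ref{large distances} / weak continuity \eqref{reg-convergence} for the behaviour near $\ep=\tilde g$; local Riemann integrability and the a.e.\ continuity needed for direct Riemann integrability follow from the weak convergence \eqref{reg-convergence} of $C_k(F(\ep),\cdot)$ at regular $\ep$ (using condition (i)) together with condition (ii) ensuring no mass escapes onto $\bd\Gamma$ in the limit, exactly as in the continuity argument at the end of the proof of Theorem~\ref{main}. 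The boundedness of $\ep^{\Dim-k}C_k^\var(F(\ep),O)$ on $(0,\tilde g)$ is then a standard consequence of \eqref{eq:renewal} (the renewal sum of a d.R.i.\ function over a finite-mean renewal process is bounded).

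**Conclusion and main obstacle.** Applying the Renewal Theorem in the form of Feller \cite[p.363]{Fel71} — using that the underlying lattice distribution is $\tilde\nu_0=\sum_i\mathbf 1_{(\cdot)}(|\ln r_i|)r_i^{\Dim}$ with mean $\eta=\sum_i|\ln r_i|r_i^{\Dim}$ — gives in the nonlattice case $\lim_{t\to\infty}Z(t)=\eta^{-1}\int_0^\infty z(t)\,dt=\eta^{-1}\int_0^{\tilde g}\ep^{\Dim-k-1}C_k(F(\ep),G)\,d\ep$, and since $C_k(F(\ep),\Gamma)=C_k(F(\ep),G)$ for a.a.\ $\ep$ by condition (ii), this is precisely \eqref{eqn:Yk-Gamma}; this establishes existence of $\sC_k^{\Dim}(F,O)$. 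In the lattice case the Renewal Theorem only gives convergence of Cesàro averages, which after the substitution $\ep=e^{-t}$ translates into the existence of the average fractal curvature $\asC_k^{\Dim}(F,O)$ with the same value \eqref{eqn:Yk-Gamma}; equivalently one invokes the averaged renewal statement directly, as in Remark~\ref{rem:avlim}. The main obstacle — and the only place where dropping the strong-feasibility hypothesis of \cite{Wi15} really costs something — is verifying that the reward function $z$ is genuinely directly Riemann integrable without a strong feasible set: strong feasibility was previously used to control the behaviour of $C_k(F(\ep),G)$ near $\ep=\tilde g$ and to guarantee that $G$ is "deep inside" the feasible set. Here one must instead argue that conditions (ii) and (iii), combined with the weak convergence \eqref{reg-convergence} valid at all regular $\ep\in(0,\tilde g)$ (condition (i)) and the fact that $\bd G\subset\bd\Gamma\cup\bigcup_i\bd S_iO$ has no $C_k^\var(F(\ep),\cdot)$-mass for a.a.\ $\ep$, suffice to yield a.e.\ continuity of $z$ and an integrable dominating bound — a careful but essentially routine measure-theoretic verification paralleling the continuity argument already carried out in the proof of Theorem~\ref{main}.
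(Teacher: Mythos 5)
Your proposal is correct and follows essentially the same route as the paper: a self-similar decomposition of $C_k(F(\ep),O)$ via the projection condition, locality and scaling (the paper writes the one-step version $C_k(F(\ep),O)=\sum_i r_i^k C_k(F(\ep/r_i),O)+C_k(F(\ep),\Gamma)$ directly rather than first summing over the whole tiling, but this is only a presentational difference), followed by Feller's Renewal Theorem applied to the reward function $z(t)=\ep^{\Dim-k}C_k(F(\ep),\Gamma)|_{\ep=\tilde g e^{-t}}$, whose direct Riemann integrability is obtained exactly as you describe from (iii) (integrable dominating bound) and from (i), (ii) together with the weak continuity \eqref{reg-convergence} (a.e.\ continuity). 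You also correctly identify both the separate, easier treatment of $k=n-1$ and the key point that conditions (ii)--(iii) replace the strong feasibility of $O$ used in \cite{Wi15}.
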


 \begin{proof}
Fix some $k\in\{0,\ldots,n-2\}$. (The slightly easier case $k=n-1$ is discussed separately at the end.) Recall from \eqref{regular distances} the set $\reg(F)$ of regular values of $F$ and let $\reg'(F)$ be the set of regular values such that $C^{\var}_k(F(\ep),\bd \Gamma)=0$. 
Set $\sN:=\reg'(F)^c$ and observe that, by conditions (i) and (ii), $\sN$ is a Lebesgue null set. Let $\sN^*:=\bigcup_{\sigma\in \W_*} r_\sigma \sN$ and
note that $\sN^*$ is also a null set. Moreover, if $\ep\in\sN^*$ and $\sigma\in \W_*$, then $r_\sigma\eps\in\sN^*$. (Indeed, $\ep\in\sN^*$ means there are $\tau\in \W_*$ and $\ep'\in\sN$ such that $\ep=r_\tau\ep'$. But then $r_\sigma\eps=r_\sigma r_\tau\ep'=r_{\sigma\tau}\ep'$ with $\sigma\tau \in \W_*$ and thus $r_\sigma\eps\in\sN^*$.) Set $\reg^*:=(0,\infty)\setminus\sN^*$ and note that the observed invariance property for $\sN^*$ implies $r_\sigma^{-1}\reg^*\subset\reg^*$ for any $\sigma\in \W_*$, i.e., $\ep\in\reg^*$ implies $r_\sigma^{-1}\ep\in\reg^*$.
(Assume not. Then $r_\sigma^{-1}\ep\in\sN^*$ and thus $r_\sigma r_\sigma^{-1}\ep=\ep\in\sN^*$, a contradiction.)

For any $\ep\in \reg^*$, we have, by Lemma~\ref{lem:PC} and the locality and scaling properties of the curvature measures,
\begin{align*}
 C_k(F(\ep),O)&=\sum_{i=1}^N C_k(F(\ep),S_iO)+ C_k(F(\ep),\Gamma)\\
 &=\sum_{i=1}^N C_k((S_iF)(\ep),S_iO)+ C_k(F(\ep),\Gamma)\\
 &=\sum_{i=1}^N r_i^k C_k(F(\ep/r_i),O)+ C_k(F(\ep),\Gamma).
\end{align*}
Indeed, $\ep\in\reg^*$ implies $\ep/r_i\in\reg^*$ such that $C_k(F(\ep/r_i),\cdot)$ and thus also $C_k((S_iF)(\ep),\cdot)$ are well-defined.
Set
\begin{align*}
 Z(t):=\tilde{g}^{\Dim-k}e^{(\Dim-k)t}C_k(F(\tilde g e^{-t}),O) \quad \text{ and } \quad z(t):=\tilde{g}^{\Dim-k}e^{(\Dim-k)t}C_k(F(\tilde g e^{-t}),\Gamma)
\end{align*}
for each $t\in T^*:=\{t\in\R: \tilde g e^{-t}\in\reg^*\}$.
Note that both functions vanish for all $t\in T^*\cap(-\infty,0]$, since this implies $\tilde g e^{-t}\geq\tilde g$ such that $O$ (and thus $\Gamma$) are contained in the interior of $F(\tilde g e^{-t})$, where the curvature measures have no mass.
Substituting $\ep=\tilde g e^{-t}$ and multiplying $\eps^{\Dim-k}=\tilde{g}^{\Dim-k}e^{(\Dim-k)t}$ in the above equation, we obtain
\begin{align} \label{eq:renewalZ}
 Z(t)=\sum_{i=1}^N r_i^{\Dim} Z(t-|\ln r_i|) + z(t)
\end{align}
for all $t\in T^*$.
In terms of the discrete measure $\tilde\nu_0:=\sum_{i=1}^{N}\1_{(\cdot)}(|\ln r_i|)\, r_i^{\Dim}$ this renewal equation can be rewritten as
\begin{align*}
 Z(t)=\int_0^t Z(t-s) \tilde\nu_0(ds) + z(t), \quad t\in T^*.
\end{align*}
Denote by $\tilde\nu_m$ is the $m$-{\it th convolution power} of $\tilde\nu_0$ and
the associated {\it renewal function} by $U(s):=\sum_{m=0}^\infty\tilde\nu_m((0,s])$.
Iterating the above equation, we get
\begin{align*}
Z(t)=&\int_0^t z(t-s)\, dU(s)
\end{align*}
for any $t\in T^*$. In order to apply the Renewal theorem, we introduce two auxiliary functions on $\R$ by
\begin{align*}
\bz(t)&:=\begin{cases}
 z(t), &{\rm if}~t\in T^*\\
 \limsup\limits_{\stackrel{t'\to t}{t'\in T^*}} z_k(t'), &{\rm if}~t\notin T^*,
\end{cases}\\
\bZ(t)&:=\begin{cases}\int_0^t\bz(t-s)dU(s), & t>0,\\
0 & t\leq 0.
\end{cases}
\end{align*}
First note that $\bz$ vanishes on $(-\infty,0)$. Moreover, by the last equation, we have
\begin{align*}
\bZ(t)=Z(t) \quad \text{ for all } t\in T^*.
\end{align*}
Condition (iii) implies there are positive constants $c,\gamma$ such that
\begin{align}\label{eq:z-estimate}
 |\bz(t)|\leq c\tilde{g}^\gamma e^{-\gamma t}, \quad t>0.
\end{align} Hence $|\bz|$ is bounded by a directly Riemann integrable function. Moreover, the weak convergence \eqref{reg-convergence} implies $C_k(F(\ep'),\Gamma)\to C_k(F(\ep),\Gamma)$ as $\ep'\to\ep$ for any $\ep\in\reg^*$, since, by assumption (ii), $\Gamma$ is a continuity set of the measure $C_k(F(\ep),\cdot)$. This implies the continuity of $z$ and thus of $\bz$ at each $t\in T^*$. Hence $\bz$ is continuous almost everywhere and, by \cite[Prop. 4.1, p.118]{As87}, we can conclude that $\bz$ itself is directly Riemann integrable. Therefore, by the Renewal theorem in Feller \cite[p.363]{Fel71}, we get in the nonlattice case
$$\lim_{t\rightarrow\infty}\limits\bZ(t)=\frac 1\eta\int_0^\infty\bz(t)\, dt.$$
Since $\bz(t)=z(t)$ for Lebesgue-a.a. $t$, the right hand side agrees with
\begin{align} \label{eq:limit-expr-z}
 \frac{1}{\eta}\int_0^\infty z(t)\,dt= \frac 1\eta \int_0^{\tilde g} \eps^{\Dim-k} C_k(F(\ep),\Gamma) \frac{d\ep}{\ep}.
\end{align}
Now the existence of the essential limit $\sC_k^{\Dim}(F,O)$ and the expression stated for it in \eqref{eqn:Yk-Gamma} are transparent.
If $F$ is lattice with lattice constant $h$, then Feller's Renewal theorem states that
for Lebesgue-a.a. $u\in[0,h)$
\begin{align}
\label{eq:discrete-limits} \lim_{n\rightarrow\infty}\bZ(u+nh)=\frac{h}{\eta}\sum_{m=1}^\infty\bz(u+mh).
\end{align}
In this relation we can replace $\bZ$ by $Z$ and $\bz$ by $z$ for Lebesgue a.a. $u\in[0,h)$. (More precisely, we can do this for all $u\in[0,h)$ such that $u+kh\in T^*$ for all $k\in\N_0$, which is a subset of $[0,h)$ of full Lebesgue measure.)
Finally, the existence of the limits in \eqref{eq:discrete-limits} implies that also the average limit
\begin{align*}
 \lim_{T\to\infty}\frac 1T \int_0^T \bZ(t) dt= \lim_{T\to\infty}\frac 1T \int_0^T Z(t) dt=\asC_k(F,O)
\end{align*}
exists and equals the expression in \eqref{eq:limit-expr-z},
see \cite[p.1959]{Ga00} or \cite[Lemma 3.2]{LW06}.

In the case $k=n-1$ 
we have $C_{n-1}(F(\ep), \cdot)=\frac 12 \H^{n-1}(\bd F(\ep)\cap \cdot)$,
which is well-defined for all $\ep>0$ (and not only for regular values of $F$, which is why the regularity condition in (i) is not needed in this case). Hence the functions $z$ and $Z$ are defined for all $t$ and the renewal equation \eqref{eq:renewalZ} can be derived in the same way as above but for all $t$. (Here the auxiliary functions $\bz$ and $\bZ$ are not needed.) 
The continuity of $z$ almost everywhere follows now in the same way as before from condition (ii).
Since condition (iii) implies the estimate \eqref{eq:z-estimate} (for $z$ instead of $\bz$), we can conclude the direct Riemann integrability of $z$. Now all the assertions follow as before from Feller's Renewal theorem.
\end{proof}

\begin{remark} \label{rem:thm10}
 (i) In formula \eqref{eqn:Yk-Gamma} the relative inradius $\tilde g$ appears as the endpoint of the domain of integration. It may be replaced with any larger constant without changing the value of the integral, since $\bd (F(\ep))\cap\Gamma=\emptyset$ for $\ep>\tilde g$ and thus $C_k(F(\ep),\Gamma)=0$.

 (ii) Since $\Gamma=G \cup(\bd\Gamma\cap\Gamma)$, condition (ii) in Theorem~\ref{thm:rel-curv-O} implies that
 $C_k(F(\ep),\Gamma)=C_k(F(\ep),G)$ holds for a.a.\ $\ep>0$. Hence in formula \eqref{eqn:Yk-Gamma} the set $\Gamma$ may be replaced by $G$ (and similarly in condition (iii)). 
%
\end{remark}

 \begin{remark}
 For the case $k=n$, i.e.\ for the measures $C_n(F(\ep),\cdot)=\lambda^n(F(\ep)\cap\cdot)$ a statement analogous to Theorem~\ref{thm:rel-curv-O} holds. In this case the assumptions (i) and (ii) can be omitted and (iii) simplifies to the existence of constants $c,\gamma>0$ such that, for all $\ep\in(0,\tilde g)$,
\begin{equation}\label{eqn:C_n-G-O}
\lambda_n(F(\ep)\cap\Gamma)\le c \ep^{n-\Dim+\gamma}.
\end{equation}
Moreover, in the conclusion a slightly different formula is obtained for the (average) relative Minkowski content of $F$ relative to $O$:
\begin{align}\label{eqn:Yn}
\asM^{\Dim}(F,O) =\frac{1}{\eta} \int_0^{\infty} \ep^{\Dim-k-1} \lambda_n(F(\ep)\cap\Gamma)\ d\ep.
\end{align}
Note that, in contrast to \eqref{eqn:Yk-Gamma}, in this formula the integration extends over the whole positive axis, which is related to the fact that the volume does not vanish for $\eps>\tilde g$. Since $\lambda_n(F(\ep)\cap\Gamma)=\lambda_n(\Gamma)$ for $\ep\geq\tilde g$, \eqref{eqn:Yn} may also be written as
\begin{align*}
\asM^{\Dim}(F,O) =\frac{1}{\eta} \int_0^{\tilde g} \ep^{\Dim-k-1} \lambda_n(F(\ep)\cap\Gamma)\ d\ep + \frac{\lambda_n(\Gamma)}{\eta}\frac{\tilde g^{\Dim-n}}{n-\Dim}.
\end{align*}
The statement for $k=n$ may be proved following the lines of the proof of \cite[Proposition 3.18]{Wi15}. We do not assume here that the set $O$ is strong, but instead the required estimates for the function $\varphi$ are now derived from the assumption \eqref{eqn:C_n-G-O} and the projection condition \eqref{eqn:PC}, which, by Lemma~\ref{lem:PC}, implies $(S_iF)(\ep)\cap S_iO=F(\ep)\cap S_iO$. Finally, to obtain the formula \eqref{eqn:Yn}, one needs to combine this with the computation in the proof of \cite[Theorem 3.17, see p.306/307 starting from eq.~(3.39)]{Wi15}, which carries over to the present situation.
 \end{remark}

Now we will employ Theorem~\ref{thm:rel-curv-O}, to compute inner and outer fractal curvatures for domains $\dom \subset\R^n$ with piecewise self-similar boundary as defined in \eqref{disjoint(j)}. 
We will concentrate on the approximation of $\dom $ from inside and derive formulas for inner fractal curvatures.
For the outer fractal curvatures 
corresponding results can be derived, see Remark~\ref{rem:outer-approx} at the end.
Instead of working with a strong feasible open set for each $\Fj$ as in the previous section, we will choose feasible open sets $\Uj$ which are adapted to the domain $\dom $. More precisely, we assume that for each $j=1,\ldots,M$ there is a nonempty feasible open set $\Uj\subset\dom$ satisfying 
the following conditions: 
\begin{align}
\label{eq:Uj-disjoint} \Uj\cap U^{(\ell)}&=\emptyset, \quad \text{ for } j\neq\ell,\\
\label{eqn:PCj}
 \Sj_i \Uj&\subset \cl{\pi^{-1}_{\Fj}(\Sj_i \Fj)}, \quad \text{ for } i=1,\ldots,\Nj,\\
 \label{eq:Uj-in-D} \Uj&\subset \cl{\pi^{-1}_{\bd \dom } (\Fj)}.
\end{align}
Moreover, we assume that the sets $\Uj$ cover the essential part of $\dom $ in the following sense: there are positive constants $\ep_0,c,\gamma$ such that
 for almost all $0<\ep< \ep_0$
\begin{equation}\label{eq:D-in-Uj}
C_k^{\var}(\bd \dom (\ep),X)\le c \ep^{k-\Dim+\gamma},
 \end{equation}
 where $X:=\dom \setminus \bigcup_{m,\eta(t)} \Uj$.

Condition \eqref{eq:Uj-disjoint} is the same as \eqref{eq:O-disjoint} and the assumption $\Uj\subset\dom$ should be compared with \eqref{eq:interior}. Condition \eqref{eqn:PCj} is just the projection condition \eqref{eqn:PC} from above, which we require here for each of the sets $\Uj$, and \eqref{eq:Uj-in-D} is similar to the projection condition ensuring that the points of $\Uj$ are projected to the right piece $\Fj$ of $\bd \dom $ under the metric projection onto $\bd \dom $.

The assumptions are not very restrictive from the point of view of the domain $\dom $. We just use our freedom to choose suitable open sets $\Uj$. In particular, in the examples depicted in Figures~\ref{fig:1} and~\ref{fig:ex} the sets $\Uj$ can be chosen in such a way that all the assumptions \eqref{eq:Uj-disjoint}--\eqref{eq:D-in-Uj} are satisfied. For a certain subclass of the sets in Example~\ref{ex:Koch-type} we discuss this in detail in Example~\ref{ex:Koch-type2} below.
However, all four conditions \eqref{eq:Uj-disjoint}--\eqref{eq:D-in-Uj} might not always be satisfiable together, see Remark~\ref{rem:chooseVj} for a further discussion of this.


\begin{theorem} \label{thm:domain-D}
Let $\dom \subset\R^n$ be a domain with piecewise self-similar boundary as in \eqref{disjoint(j)}, composed of self-similar sets $\Fj$, $j=1,\ldots,M$ satisfying OSC. Suppose that for the $\Fj$ feasible open sets $\Uj\subset\dom$ exist such that the conditions 
\eqref{eq:Uj-disjoint}--\eqref{eq:D-in-Uj} hold.
Let $k\in\{0,\ldots, n-1\}$.
 Assume that for each $j$ conditions (i)-(iii) of Theorem~\ref{thm:rel-curv-O} are satisfied (for the set $\Uj$ and the generated tiling). If $k\leq n-2$, assume additionally that almost all $\ep\in(0,g)$ are regular for $\bd \dom $, where $g$ denotes the inradius of $\dom $.
Then the inner average $k$-th fractal curvature $\tC_k^{\Frac}(\bd \dom ,\dom )$ of $\bd \dom $
 exists and is given by
 \begin{align} \label{eq:av-rel-frac-curv-relation}
 \tC_k^{\Frac}(\bd \dom ,\dom )=\sum_{j=1}^M \tC_k^{\Frac}(\Fj,\Uj).
 \end{align}
If $\dom $ is completely non-lattice, then also the inner $k$-th fractal curvature $C_k^{\Frac}(\bd \dom ,\dom )$ of $\bd \dom $ 
exists and is given by
\begin{align} \label{eq:rel-frac-curv-relation}
 C_k^{\Frac}(\bd \dom ,\dom )=\sum_{j=1}^M C_k^{\Frac}(\Fj,\Uj).
 \end{align}
\end{theorem}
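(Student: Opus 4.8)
The plan is to reduce the computation of the inner (average) fractal curvatures of $\bd\dom$ to the relative fractal curvatures of the individual pieces $\Fj$ with respect to the adapted feasible sets $\Uj$, by means of Theorem~\ref{thm:rel-curv-O} applied to each $\Fj$ and a localization/decomposition argument using the covering condition \eqref{eq:D-in-Uj}. First I would record the key geometric consequence of the projection conditions: by \eqref{eq:Uj-in-D} together with Lemma~\ref{lem:PC} (applied with $F=\Fj$ and $O=\Uj$, which is licensed by \eqref{eqn:PCj}), one gets, for almost all $\ep>0$ and each $j$,
\begin{align*}
 \bd\dom(\ep)\cap \Uj = \Fj(\ep)\cap\Uj = (\Fj)(\ep)\cap\Uj,
\end{align*}
so that by the locality property \eqref{locality} of the curvature measures, $C_k\big(\bd\dom(\ep),\Uj\cap(\cdot)\big)=C_k\big(\Fj(\ep),\Uj\cap(\cdot)\big)$ for a.a.\ $\ep$. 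This is the crucial identification of the local contribution of $\bd\dom$ near $\Fj$ with the purely self-similar local contribution of $\Fj$.

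Next I would decompose, for a.a.\ small $\ep$,
\begin{align*}
 C_k\big(\bd\dom(\ep)\big)
 = \sum_{j=1}^M C_k\big(\bd\dom(\ep),\Uj\big) + C_k\big(\bd\dom(\ep),X\big),
\end{align*}
using $\dom=X\cup\bigcup_j\Uj$ with the $\Uj$ pairwise disjoint by \eqref{eq:Uj-disjoint} (one must be slightly careful that $\bd\dom(\ep)$ meets $\dom$ only near $\bd\dom$ for small $\ep$, which is where the inradius hypothesis on $\dom$ and the regularity of a.a.\ $\ep\in(0,g)$ enter — this guarantees the curvature measures of $\bd\dom(\ep)$ restricted to $\dom$ are well-defined and supported in the relevant region). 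By \eqref{eq:D-in-Uj}, the remainder term satisfies $\ep^{\Dim-k}C_k^{\var}(\bd\dom(\ep),X)\le c\ep^\gamma\to 0$, hence it contributes nothing to the (essential, resp.\ average) limit as $\ep\searrow 0$. Combining with the identification above,
\begin{align*}
 \ep^{\Dim-k}C_k\big(\bd\dom(\ep)\big)
 = \sum_{j=1}^M \ep^{\Dim-k} C_k\big(\Fj(\ep),\Uj\big) + o(1)
\end{align*}
for a.a.\ $\ep\searrow 0$. Now Theorem~\ref{thm:rel-curv-O}, applied to each $\Fj$ with the feasible set $\Uj$ (its hypotheses (i)-(iii) being assumed), yields that each summand $\ep^{\Dim-k}C_k(\Fj(\ep),\Uj)$ is bounded on $(0,\tilde g^{(j)})$ and that its average limit exists and equals $\asC_k^{\Dim}(\Fj,\Uj)$. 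Taking average limits term by term (the sum being finite) gives \eqref{eq:av-rel-frac-curv-relation}. In the completely non-lattice case, Theorem~\ref{thm:rel-curv-O} further gives the existence of the plain essential limits $C_k^{\Frac}(\Fj,\Uj)$ for each $j$, and hence of their finite sum, which by the displayed identity equals the essential limit defining $C_k^{\Frac}(\bd\dom,\dom)$ — this is \eqref{eq:rel-frac-curv-relation}. For $k=n-1$ one uses $C_{n-1}(\bd\dom(\ep),\cdot)=\tfrac12\H^{n-1}(\bd(\bd\dom(\ep))\cap\cdot)$ so that no regularity of $\ep$ is needed and all steps go through for every $\ep$; the inradius/regularity hypothesis on $\dom$ is only invoked when $k\le n-2$.

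The main obstacle I anticipate is the careful bookkeeping in the decomposition step: one must verify that for all sufficiently small $\ep$ the parallel set $\bd\dom(\ep)$, intersected with $\dom$, is genuinely covered (up to the null-mass set controlled by \eqref{eq:D-in-Uj}) by the disjoint pieces $\Fj(\ep)\cap\Uj$, and that there is no ``cross-talk'' between different pieces $\Fj$ inside a given $\Uj$ — this is exactly what \eqref{eq:Uj-in-D} (points of $\Uj$ project to $\Fj$, not to another piece) together with $\Uj\subset\dom$ is designed to rule out, paralleling the role played by \eqref{eq:interior} and the disjointness of the $\Oj$ in the proof of Theorem~\ref{main}. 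A secondary technical point is the passage to the essential/average limit on the left-hand side: one needs that a.a.\ $\ep\in(0,g)$ are regular for $\bd\dom$ (when $k\le n-2$), which is given, together with the fact that the set of regular $\ep$ for the finitely many $\Fj$ is also of full measure, so that the identity above holds on a set of full Lebesgue measure in a neighborhood of $0$ — which is all that is needed for both the essential and the averaged limits.
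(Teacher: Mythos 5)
Your proposal is correct and follows essentially the same route as the paper: decompose $C_k\big((\bd\dom)(\ep),\dom\big)$ over the disjoint sets $\Uj$ and the remainder set $X$, use \eqref{eq:Uj-in-D} and locality to identify $C_k\big((\bd\dom)(\ep),\Uj\big)$ with $C_k\big(\Fj(\ep),\Uj\big)$, kill the $X$-term via \eqref{eq:D-in-Uj}, and apply Theorem~\ref{thm:rel-curv-O} termwise for the (average or essential) limits. Two cosmetic points: the set identity $(\bd\dom)(\ep)\cap\Uj=\Fj(\ep)\cap\Uj$ follows from \eqref{eq:Uj-in-D} alone (Lemma~\ref{lem:PC} is only needed inside the proof of Theorem~\ref{thm:rel-curv-O}), and the left-hand side of your decomposition should read $C_k\big((\bd\dom)(\ep),\dom\big)$ rather than the total curvature $C_k\big((\bd\dom)(\ep)\big)$.
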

\begin{proof}
 First observe that the assumptions on the sets $\Fj$ and $\Uj$ ensure that, by Theorem~\ref{thm:rel-curv-O} above, $\tC_k^{\Frac}(\Fj,\Uj)$ exists in general and $C_k^{\Frac}(\Fj,\Uj)$ exists in case $\Fj$ is non-lattice. Moreover, we can decompose the curvature measure of $\bd \dom(\ep)$ inside $\dom $ as follows for any $\ep\in(0,g)$ that is regular for $\bd \dom $ and all the $\Fj$. By \eqref{eq:Uj-disjoint} and the definition of $X$, we have for all such $\ep$
 \begin{align}
 \notag C_k((\bd \dom )(\ep),\dom )&=\sum_{j=1}^M C_k((\bd \dom )(\ep),\Uj)+ C_k((\bd \dom )(\ep),X)\\ \label{eq:decomp-cm}
 &=\sum_{j=1}^M C_k(\Fj(\ep),\Uj)+ C_k((\bd \dom )(\ep),X).
 \end{align}
 For the last equality note that \eqref{eq:Uj-in-D} implies
 $$
 (\bd \dom )(\ep)\cap\Uj=\Fj(\ep)\cap\Uj
 $$
 for such $\ep$ and all $j$, and therefore, by \eqref{locality}, $C_k((\bd \dom )(\ep),\Uj)=C_k(\Fj(\ep),\Uj)$. (For a proof of the above set equality note that one inclusion is immediate from $\Fj\subset\bd \dom $. For the other inclusion let $x\in(\bd \dom )(\ep)\cap\Uj$. Then, by \eqref{eq:Uj-in-D}, $x\in\cl{\pi^{-1}_{\bd \dom } (\Fj)}$, implying the existence of a sequence $(x_n)_{n\in\N}$ with $ x_n\to x$ as $n\to\infty$ and $x_n\in\pi^{-1}_{\bd \dom } (\Fj)$ for each $n$. The latter condition implies $d(x_n,\Fj)\leq d(x_n,\bd \dom \setminus \Fj)$ for each $n$ which is preserved in the limit, i.e.\ for $x$, since $\Fj$ is closed. Thus, we conclude the existence of some point $z\in\Fj$ such that
 $d(x,\Fj)=|x-z|=d(x,\bd \dom )\leq\ep$ which yields $x\in \Fj(\ep)$.)
 If the generating system for $\bd \dom $ is completely non-lattice, then we can multiply $\ep^{\Dim-k}$ in equation \eqref{eq:decomp-cm} and take the essential limit as $\ep\searrow 0$. Then, by Theorem~\ref{thm:rel-curv-O}, all the summands on the right of equation \eqref{eq:decomp-cm} converge (the $j$-th to $C^{\Frac}_k(\Fj,\Uj)$) except the last one, which vanishes due to \eqref{eq:D-in-Uj}. Indeed, the absolute value of $C_k((\bd \dom )(\ep),X)$ is bounded from above by $C_k^{\var}((\bd \dom )(\ep),X)$ and thus, by \eqref{eq:D-in-Uj}, $\ep^{\Dim-k}|C_k((\bd \dom )(\ep),X)|\le c \ep^\gamma$ which vanishes as $\ep\searrow 0$. This shows the existence of $C_k^{\Frac}(\bd \dom , \dom )$ and the relation \eqref{eq:rel-frac-curv-relation} in the completely non-lattice case.
 In the general case one can similarly use \eqref{eq:decomp-cm} to decompose the integral $\int_\delta^g
\eps^{\Dim-k} C_k((\bd \dom )(\ep), \dom ) \frac{d\eps}{\eps}$ into
\begin{align*}
 \sum_{j=1}^M \int_\delta^g
\eps^{\Dim-k} C_k(\Fj(\ep), \Uj) \frac{d\eps}{\eps} + \int_\delta^g
\eps^{\Dim-k} C_k((\bd \dom )(\ep), X) \frac{d\eps}{\eps}.
\end{align*}
 Now, again by \eqref{eq:D-in-Uj}, the absolute value of the last integral can be bounded by some constant. Thus, multiplying $|\ln \delta|^{-1}$ and taking the limit as $\delta\searrow 0$, the last summand vanishes.
 Doing the same with the other integrals yields the average limits $\tC^{\Frac}_k(\Fj,\Uj)$, which exist by Theorem~\ref{thm:rel-curv-O}. Note here that the relative inradii $\widetilde g^{(j)}$ of the sets $\Uj$ appearing in the formulas in Theorem~~\ref{thm:rel-curv-O} satisfy $\widetilde g^{(j)}\leq g$ for each $j$ and thus, by Remark~\ref{rem:thm10}(i), we can replace the upper bound $g$ of the $j$-th integral by $\widetilde g^{(j)}$ without changing its value.
 This yields the existence of the average limit $\tC^{\Frac}_k(\bd \dom ,\dom )$
 and the formula stated in \eqref{eq:av-rel-frac-curv-relation}.
 Finally observe that for the for case $k=n-1$ the additional regularity assumption on $\bd \dom $ is not needed and all of the above arguments work without this.
 \end{proof}


\begin{ex} (Example~\ref{ex:Koch-type} continued) \label{ex:Koch-type2}
Let $\dom$ be a snowflake-type domain as introduced in Example~\ref{ex:Koch-type} generated by placing a copy $\Fj$ of the same Koch-type curve $F_p$ on each side of a triangle with vertices $P_1,P_2$ and $P_3$. For simplicity let us assume now that the triangle is equilateral. (In the general case the considerations below are more involved.) For each $\Fj$, $j=1,2,3$ we choose $\Uj$ as follows. The three segments $P_iS$, $i=1,2,3$ from $P_i$ to the centre $S$ of the triangle divide $\dom$ into three open parts, see Figure~\ref{fig:1} (left). Let $\Uj$ be the one closest to $\Fj$. Note that $\Uj=\{x\in\dom: d(x,\Fj)<d(x,\bd\dom\setminus \Fj)\}$, from which it is clear that these sets satisfy $\Uj\subset\dom$ as well as conditions \eqref{eq:Uj-disjoint} and \eqref{eq:Uj-in-D}. Moreover, by looking at the images $S_i^{(j)}\Uj$, it is easy to verify that $\Uj$ is a feasible set for $\Fj$ and that the projection condition \eqref{eqn:PCj} holds. Finally, the set $X=\dom \setminus \bigcup_{m,\eta(t)} \Uj$ consists of the three segments $P_iS$ excluding the points $P_i$. One can show that, for any $\eps\in(0,g)$, where $g$ is the inradius of $\dom$, the intersection $\bd(\bd\dom(\ep))\cap X$ consists of three points. Since, for any $K\subset\R^2$ and almost every $\eps>0$, $C_0^\var(K(\ep),\{x\})\leq \frac 12$ for any point $x$, condition \eqref{eq:D-in-Uj} is satisfied. Hence, Theorem~\ref{thm:domain-D} applies and the fractal curvatures of $\bd\dom$ may be determined by computing the relative fractal curvatures $C_k^{\Frac}(\Fj,\Uj)$.
\end{ex}

\begin{remark} \label{rem:chooseVj}
It is a natural question to ask whether for any domain $\dom \subset\R^n$ with piecewise self-similar boundary as in \eqref{disjoint(j)} it is possible to choose feasible open sets $\Uj$ such that all the conditions 
\eqref{eq:Uj-disjoint}--\eqref{eq:D-in-Uj} hold. It turns out that the setting is too general and that there are simple situations in which some of the conditions are not satisfiable.
For instance, if one of the generating IFS contains a map $\Sj_i$ such that $\Sj_i\dom \cap \dom =\emptyset$, then condition \eqref{eq:Uj-in-D} is not satisfiable. (Such a situation occurs e.g.\ if the Koch snowflake is generated by IFSs consisting of two maps.) The generating maps have to preserve (at least locally) the side of the boundary, i.e.\ images $\Sj_i(x)$ of points $x\in \dom $ close to $\Fj$ should also be inside $\dom $. It seems that most of the time this can be ensured by choosing suitable IFS's generating the $\Fj$.

Similarly, if the self-similar sets $\Fj$ generating $\bd \dom $ have strong overlaps, then it will be impossible to find feasible open sets $\Uj$ such that \eqref{eq:Uj-disjoint} and \eqref{eq:D-in-Uj} hold at the same time. An example is provided by a domain generated by six Koch curves arranged as indicated in Figure~\ref{fig:2} (right). Some of them coincide in a whole first level copy.

Therefore some additional restrictions are necessary to ensure the existence of suitable sets $\Uj$. However, in many cases (which avoid the two situations described above) the following considerations help to find suitable open sets $\Uj$: From \cite[Proposition~3.16]{Wi15} it is known that for any self-similar set $\Fj$ satisfying OSC there is a strong feasible open set $V_c^{(j)}$ satisfying the projection condition \eqref{eqn:PCj}. In \cite{BHR05} it was called the \emph{central open set}. Moreover, a feasible set $\Uj$ satisfies \eqref{eqn:PCj} if and only if it is a subset of $V_c^{(j)}$, cf.\ \cite[p.302]{Wi15}. Therefore, any candidate for $\Uj$ satisfying conditions \eqref{eqn:PCj} and \eqref{eq:Uj-in-D} has to be a subset of the set $\Vj:=V_c^{(j)}\cap \dom \cap \Int\left(\cl{{\pi^{-1}_{\bd \dom } (\Fj)}}\right)$. In many situations $\Vj$ itself is a feasible set satisfying also \eqref{eq:Uj-disjoint} and \eqref{eq:D-in-Uj}. The Koch-snowflake, depicted in Figure~\ref{fig:1} (left), is such a case. More generally, if $\dom $ is such a domain in $\R^2$ and the pairwise intersections of the generating self-similar sets $\Fj$ contain at most one point, then condition \eqref{eq:Uj-disjoint} will be satisfied for $V^{(1)},\ldots, V^{(N)}$ (and hence also for any collection of sets $U^{(1)},\ldots,U^{(N)}$ such that $\Uj\subseteq\Vj$ for each $j$). However, in general the sets $\Vj$ itself will not be feasible. It is easy to see that these sets satisfy the disjointness condition $\Sj_i\Vj\cap \Sj_k\Vj=\emptyset$ for $i\neq k$, which they inherit from the sets $\Vj_c$. But the inclusion $\Sj_i\Vj\subseteq\Vj$ is not true in general. 
It is not clear at the moment how to find good feasible subsets in general. Finally, we point out that the set $X=\dom \setminus\bigcup_{m,\eta(t)} \Vj$ may be large, as in the example in Figure~\ref{fig:2} (left), but this does not necessarily preclude \eqref{eq:D-in-Uj} to hold.
 \end{remark}

\begin{figure}
 \includegraphics[width=0.55\textwidth]{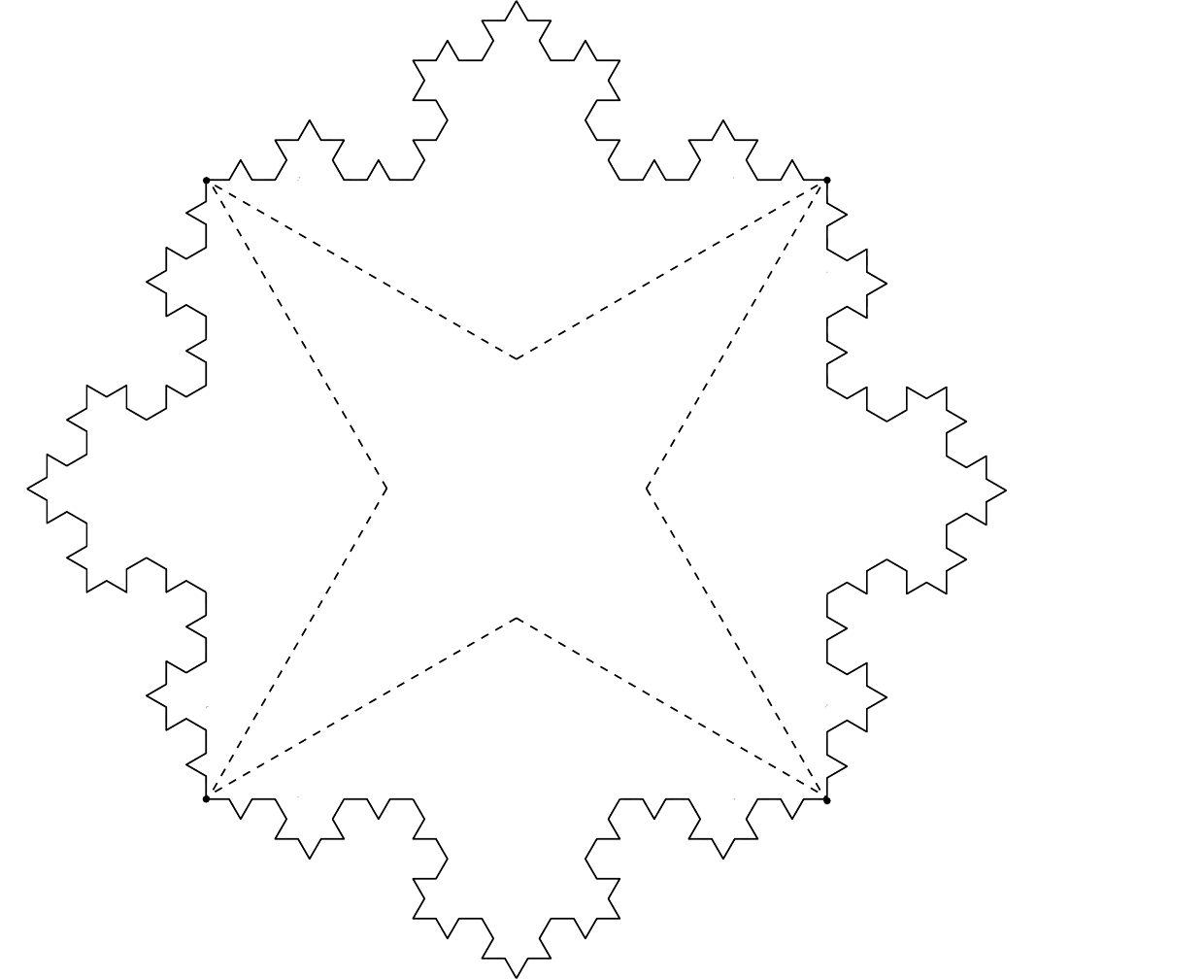}\hfill
 \includegraphics[width=0.45\textwidth]{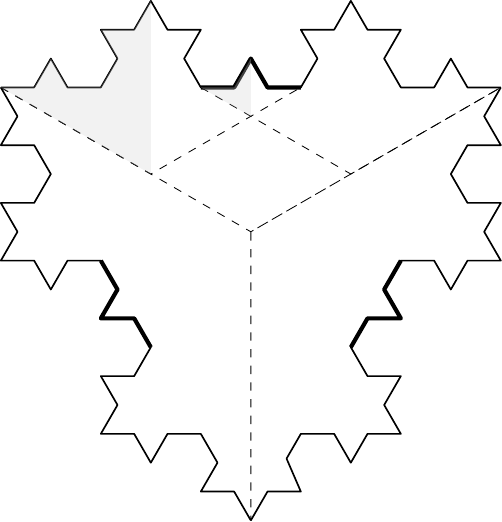}
 \caption{\label{fig:2} Further examples of domains with piecewise self-similar boundaries. \emph{Left:} Domain $\dom$ for which a large set $X$ 
 cannot be avoided. The feasible sets $\Uj$ chosen are the maximal subsets of $\dom$ satisfying \eqref{eqn:PCj}. They also satisfy conditions \eqref{eq:Uj-disjoint}, \eqref{eq:Uj-in-D} and even \eqref{eq:D-in-Uj}, despite the fact that $X$ is rather large. \emph{Right:} Domain bounded by six Koch curves, some of which overlap in a whole first level copy (overlap indicated by the bold line).}
\end{figure}

\begin{remark}
 \label{rem:outer-approx} A statement completely analogous to Theorem~\ref{thm:domain-D} holds for the outer approximation of $\dom$, leading to outer fractal curvatures of $\bd \dom$ as defined in \eqref{eq:frac-curv-def2}. All one has to do is to replace each instance of $\dom=\dom_+$ by the complementary domain $\dom_-:=\overline{\dom}^c$. In this case all feasible open sets $\Uj$ have to be constructed in $\dom_-$.
\end{remark}

%
\bibliographystyle{plain}
\bibliography{biblio}

\end{document}